\documentclass[hidelinks]{article}
\usepackage[utf8]{inputenc}
\usepackage{titlesec}
\usepackage[breaklinks=true]{hyperref}
\usepackage[includehead,a4paper,headheight=13.6pt]{geometry} 
\usepackage[indent=15pt]{parskip} 
\usepackage{amssymb,amsmath,amsthm}
\usepackage[nameinlink]{cleveref}
\usepackage{indentfirst}
\usepackage{xcolor}

\newtheorem{thm}{Theorem}
\newtheorem{lem}[thm]{Lemma}
\newtheorem{cor}[thm]{Corollary}
\newtheorem{defn}[thm]{Definition}

\titleformat*{\section}{\large\bfseries}
\titleformat*{\subsection}{\normalsize\bfseries}
\title{\vspace{-2em}Sum of Squares Decompositions in H\"older Spaces}
\author{Sullivan Francis MacDonald\footnote{University of Toronto, ON, Canada. sullivan@math.utoronto.ca. The author was supported by the Britton Estate Scholarship at McMaster University and the Department of Mathematics at the University of Toronto.}}
\date{September 2023}

\begin{document}
\maketitle

\begin{abstract}
We investigate the number of half-regular squares required to decompose a non-negative \(C^{k,\alpha}(\mathbb{R}^n)\) function into a sum of squares. Each non-negative \(C^{3,1}(\mathbb{R}^n)\) function is known to be a finite SOS in \(C^{1,1}(\mathbb{R}^n)\), and similar regularity-preserving SOS decompositions have been studied by various authors. Our work refines existing techniques to unify and build upon several known decomposition results, and moreover we provide upper and lower estimates on the number of squares required for SOS decompositions in \(C^{k,\alpha}(\mathbb{R}^n)\).
\end{abstract}

\section{Introduction}

In this paper we are concerned with the problem of decomposing a given non-negative \(C^{k,\alpha}(\mathbb{R}^n)\) function into a sum of squares (SOS) of `half-regular' functions belonging to the space 
\begin{equation}\label{halfreg}
    C^\frac{k+\alpha}{2}(\mathbb{R}^n)=\begin{cases}\hfil C^{\frac{k}{2},\frac{\alpha}{2}}(\mathbb{R}^n) & k\textrm{ even}, \\ C^{\frac{k-1}{2},\frac{1+\alpha}{2}}(\mathbb{R}^n) & k\textrm{ odd}.
    \end{cases}
\end{equation}
Our main goals are to determine when such a decomposition is possible, and to estimate the number of functions which must be used in a SOS decomposition. That is, given \(f\in C^{k,\alpha}(\mathbb{R}^n)\) which is a SOS of functions in \eqref{halfreg}, what is the smallest integer \(m=m(n,k,\alpha)\) for which there exist functions \(g_1,\dots, g_m\) in \eqref{halfreg} such that \(f=g_1^2+\cdots+g_m^2\)?

Such regularity preserving SOS decompositions have been studied by several authors. Fefferman \& Phong \cite{Fefferman-Phong} showed that \(m(n,3,1)<\infty\) for any \(n\in\mathbb{N}\), meaning that any non-negative \(C^{3,1}(\mathbb{R}^n)\) function is a finite SOS of functions in \(C^{1,1}(\mathbb{R}^n)\). This result has found several applications in PDEs and functional analysis; see e.g. \cite{Fefferman-Phong,Guan,Tataru}. Further, Bony \cite{BonyFr} shows that \(m(1,k,\alpha)=2\) for \(\alpha\in(0,1]\) and \(k\geq 2\), resolving the problem in one dimension. Techniques similar to those used by Glaeser \cite{Glaeser} similarly show that \(m(1,1,1)=1\). In contrast, it seems that little is known about \(m(n,k,\alpha)\) when \(n\) and \(k\) are both large.

The constant \(m(n,k,\alpha)\) can be viewed in loose analogy to the Pythagoras number of a field \(F\), which is the smallest integer \(p\) for which each SOS in \(F\) is a SOS of at most \(p\) elements. Pythagoras numbers have been studied extensively in algebraic settings; famously, Lagrange's four-square theorem shows that the Pythagoras number of \(\mathbb{Q}\) is four, and these invariants also arise in connection to Hilbert's 17\(^\textrm{th}\) problem concerning sums of squares of rational functions. For a complete discussion, the interested reader is referred to \cite{Pfister}. Much of our terminology is borrowed from the literature on algebraic sums of squares.

Our motivation for studying the problem outlined above is twofold. First, there is active interest in studying regularity preserving SOS decompositions for applications elsewhere in analysis; they have been used to show that certain degenerate elliptic operators are hypoelliptic  \cite{SOS_III}, they have been employed to study regularity of solutions to the Monge-Amp\`ere equation \cite{Guan}, and famously they were used by Fefferman \& Phong  \cite{Fefferman-Phong} to prove sharpened forms of G{\aa}rding's inequality. Second, we wish to explore the connections between algebraic sums of squares and regularity preserving SOS decompositions further, and to refine standard decomposition techniques to minimize the number of squares needed for a decomposition.

It is not always possible to decompose a given \(C^{k,\alpha}(\mathbb{R}^n)\) function into a finite SOS of functions in \eqref{halfreg} when \(k\geq 4\); see Theorem \ref{thm:existencecounter}. Rather, for our purposes it is necessary to study the family of \(C^{k,\alpha}(\mathbb{R}^n)\) functions which can be decomposed, which we call \(\Sigma(n,k,\alpha)\). That is,
\[
    \Sigma(n,k,\alpha)=\bigg\{f\in C^{k,\alpha}(\mathbb{R}^n):\;f=\sum_{j=1}^s g_j^2\;\textrm{ for }s<\infty\textrm{ and }g_j\in C^\frac{k+\alpha}{2}(\mathbb{R}^n)\bigg\}.
\]
The result of Fefferman \& Phong in \cite{Fefferman-Phong} shows that \(\Sigma(n,3,1)=C^{3,1}(\mathbb{R}^n)\), and later we prove that \(\Sigma(n,k,\alpha)=C^{k,\alpha}(\mathbb{R}^n)\) whenever \(k+\alpha\leq 4\). In general it is only true that \(\Sigma(n,k,\alpha)\subseteq C^{k,\alpha}(\mathbb{R}^n)\), as the examples in \cite{BonyEn} show. Following \cite{Pfister}, we define the length \(\ell(f)\) of a function \(f\in\Sigma(n,k,\alpha)\) to be the smallest number of squares needed to represent it:
\[
    \ell(f)=\min\bigg\{s\in\mathbb{N}:\exists\; g_1,\dots,g_s\in C^{\frac{k+\alpha}{2}}(\mathbb{R}^n)\textrm{ such that }f=\sum_{j=1}^s g_j^2\bigg\}.
\]
Explicitly, the constants we study are defined by \(m(n,k,\alpha)=\displaystyle\sup\{\ell(f)\;:\;f\in \Sigma(n,k,\alpha)\}\).

There is no known characterization of \(\Sigma(n,k,\alpha)\) when \(k\geq 4\), but some tangible subsets have been identified; Korobenko \& Sawyer \cite{SOS_I} adapted the arguments of Bony et. al. \cite{BonyFr,BonyEn} and Fefferman \& Phong \cite{Fefferman-Phong} to show that functions in \(C^{4,\alpha}(\mathbb{R}^2)\) which satisfy certain differential inequalities can be decomposed into finite sums of half-regular squares; we generalize their result in Theorem \ref{main 3}, and gain additional information about \(\Sigma(n,k,\alpha)\) when \(k\geq 4\) via Theorem \ref{main 2}.

The main results of this paper are summarized in the following theorems. Theorem \ref{main 1} treats the case \(k\leq 3\), in which every non-negative \(C^{k,\alpha}(\mathbb{R}^n)\) function can be decomposed into a finite sum of half-regular squares. Theorem \ref{main 2} provides lower bounds on \(m(n,k,\alpha)\) using preexisting polynomial theory together with compactness arguments. Theorem \ref{main 3} partially extends Theorem \ref{main 1} to the case \(k\geq 4\) in two dimensions. Finally, Theorem \ref{main 4} shows that \(m(n,1,\alpha)=1\) always.

\begin{thm}\label{main 1}
If \(k\leq 3\), \(\alpha\in(0,1]\) and \(n\geq 1\), then each non-negative \(C^{k,\alpha}(\mathbb{R}^n)\) function is a finite sum of squares of functions in \(C^\frac{k+\alpha}{2}(\mathbb{R}^n)\). The number of squares required is at most
\[
    m(n,k,\alpha)\leq 2^{n^2+n-1}.
\]
\end{thm}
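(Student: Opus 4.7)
The proof cleaves naturally into two regimes depending on whether $k\le 1$ or $k\in\{2,3\}$. When $k\in\{0,1\}$, the plan is to show directly that $\sqrt f$ lies in $C^{(k+\alpha)/2}(\mathbb R^n)$, giving the one-square decomposition $f=(\sqrt f)^2$. For $k=0$ this follows at once from the elementary inequality $|\sqrt a-\sqrt b|^2\le |a-b|$ applied with $a=f(x)$, $b=f(y)$. For $k=1$ I would first derive a Glaeser-type bound $|\nabla f(x)|^{(1+\alpha)/\alpha}\le C\|f\|_{C^{1,\alpha}}\, f(x)$ by combining Taylor's theorem with the non-negativity of $f$ to rule out large gradients at small values, and then combine it with $|\sqrt{f(x)}-\sqrt{f(y)}|^2\le|f(x)-f(y)|$ to extract the $(1+\alpha)/2$ Hölder modulus for $\sqrt f$. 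In both cases $m(n,k,\alpha)=1\le 2^{n^2+n-1}$.

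The substantive case is $k\in\{2,3\}$, where I would follow the Fefferman--Phong blueprint. Decompose $\mathbb R^n$ into a Whitney-type family $\{Q_\nu\}$ of axis-aligned closed cubes whose side length $\delta_\nu$ is calibrated to the flatness of $f$ near $Q_\nu$, so that on each cube exactly one of the following holds: (i) $f$ is bounded below by $c\delta_\nu^{k+\alpha}\|f\|_{C^{k,\alpha}}$, in which case $\sqrt f$ has $C^{k,\alpha}$ regularity (and hence half regularity) inside $Q_\nu$; or (ii) $f$ attains small values and Taylor expansion at a near-minimiser yields $f=P_\nu+R_\nu$ with $P_\nu$ a degree-$k$ polynomial that is almost non-negative on $Q_\nu$ and $|R_\nu(x)|\lesssim \delta_\nu^{k+\alpha}\|f\|_{C^{k,\alpha}}$. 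The cubes can then be sorted into a bounded number of colour classes so that cubes of the same colour have disjoint slight enlargements.

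On each degenerate cube I would decompose $P_\nu$ (together with a controlled correction absorbing $R_\nu$) as a finite sum of squares of linear forms (for $k=2$), or linear forms plus an additional rescaled cubic correction (for $k=3$), drawing the underlying directions from a \emph{fixed} finite family chosen so that the non-negative coefficients in the decomposition depend Hölder-continuously on the coefficients of $P_\nu$. This is where the combinatorial bound enters: the PSD cone of symmetric $n\times n$ matrices has dimension $n(n+1)/2$, and a suitable enumeration of covering directions, combined with the finitely many colour classes of the Whitney decomposition, produces the exponent $n^2+n-1$. The local SOS representations are then glued via a smooth partition of unity $\chi_\nu=\psi_\nu^2$ subordinate to $\{Q_\nu\}$: each local square $g_{\nu,j}^2$ becomes a global square $(\psi_\nu g_{\nu,j})^2$, and same-colour contributions add to a single half-regular function. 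The principal obstacle --- already present in the original Fefferman--Phong proof --- is verifying that the products $\psi_\nu g_{\nu,j}$ are uniformly of class $C^{(k+\alpha)/2}$ across cube boundaries despite the fact that derivatives of $\psi_\nu$ and the natural seminorms of $g_{\nu,j}$ scale inversely in $\delta_\nu$; this delicate interplay between the cube geometry, the cutoffs, and the local square roots is the technical heart of the argument.
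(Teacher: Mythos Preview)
Your treatment of $k\le 1$ is fine and matches the paper. The gap is in the $k\in\{2,3\}$ case: what you describe is \emph{not} the Fefferman--Phong mechanism, and your version does not close.

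The paper does not attempt to write the local Taylor polynomial as a sum of squares of linear (or ``linear plus cubic'') forms. Instead it runs an \emph{induction on the dimension} $n$. On a degenerate cube the control function $r$ forces $\partial_\xi^2 f$ to be large in some direction $\xi$; after rotating $\xi$ to $e_n$, the implicit function theorem applied to $\partial_{x_n}f$ produces a $C^{k-1,\alpha}$ graph $x_n=X(y')$ of fibrewise minima, and one sets $F(y')=f(y',X(y'))$. Then $f-F$ has a half-regular square root on the cube (this is the content of Lemmas~\ref{lem:minia}--\ref{lem:nice}), while $F$ is a non-negative $C^{k,\alpha}$ function of only $n-1$ variables, to which the inductive hypothesis applies. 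The recursion $s_n=(4^n-2^n)(s_{n-1}+1)$, with the factor $4^n-2^n$ coming from the chromatic number of the adjacency graph of the dyadic cover (Theorem~\ref{thm:party}), is what produces the exponent $n^2+n-1$ after iterating: $\prod_{\ell\le n}(4^\ell-2^\ell)\approx 2^{\sum 2\ell}\approx 2^{n^2+n}$.

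Your alternative---decomposing the degree-$k$ Taylor polynomial $P_\nu$ directly---runs into two concrete problems. First, for $k=3$ the local model is a cubic polynomial that is merely non-negative up to $O(\delta_\nu^{3+\alpha})$; such an object is not in any useful sense a sum of squares of ``linear forms plus a rescaled cubic correction'', and you give no mechanism for producing half-regular roots from it. This is precisely why Fefferman--Phong (and the present paper) peel off one variable at a time via the implicit function theorem rather than attacking $P_\nu$ head-on. Second, your accounting for the bound is wrong: the dimension $n(n+1)/2$ of the PSD cone, multiplied by any single-layer colouring bound for a Whitney cover in $\mathbb{R}^n$ (which is $O(C^n)$, not $O(2^{n^2})$), does not yield $2^{n^2+n-1}$. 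That exponent is an artefact of iterating the colouring bound across \emph{all} dimensions $2,\dots,n$ in the inductive reduction, a structure your outline lacks.
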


\begin{thm}\label{main 2}
Let \(k\geq 2\), \(\alpha\in (0,1]\), and \(n\geq 1\). There exist functions in \(C^{k,\alpha}(\mathbb{R}^n)\cap \Sigma(n,k,\alpha)\) which are sums of squares of no fewer than \(m(n,k,\alpha)\) functions in the space \(C^{\frac{k+\alpha}{2}}(\mathbb{R}^n)\), where
\[
    m(n,k,\alpha)\geq 2^{n-1}\left(\frac{k}{k+n}\right)^{\frac{n}{2}}.
\]
\end{thm}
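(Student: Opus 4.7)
The plan is to translate a known lower bound from the theory of polynomial sums of squares into a lower bound for \(m(n,k,\alpha)\) by a rescaling and compactness argument. The algebraic input is a nonnegative homogeneous polynomial \(P\in\mathbb{R}[x_1,\ldots,x_n]\) of even degree \(2d\le k\) whose polynomial Pythagoras number is at least \(N:=\lceil 2^{n-1}(k/(k+n))^{n/2}\rceil\); that is, any representation \(P=\sum_{j=1}^s p_j^2\) with each \(p_j\) a polynomial of degree \(\le d\) requires \(s\ge N\).  Such polynomials are available from classical work on Pythagoras numbers in real algebraic geometry (see \cite{Pfister}); heuristically, the factor \((k/(k+n))^{n/2}\) reflects a dimension count on degree-\(d\) forms in \(n\) variables and the factor \(2^{n-1}\) comes from a parity/sign argument across coordinates. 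Since \(P\) is smooth, \(P\in C^{k,\alpha}(\mathbb{R}^n)\cap\Sigma(n,k,\alpha)\).

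With such a \(P\) in hand, I would assume toward a contradiction that \(P=\sum_{j=1}^s g_j^2\) with \(g_j\in C^{(k+\alpha)/2}(\mathbb{R}^n)\) and \(s<N\). The pointwise estimate \(g_j^2\le P\) gives \(|g_j(x)|\le C|x|^{d}\), and combined with the Hölder regularity of \(g_j\) this forces \(D^\beta g_j(0)=0\) for \(|\beta|<d\); Taylor's theorem then yields \(g_j(x)=p_j(x)+O(|x|^{(k+\alpha)/2})\) with \(p_j\) homogeneous of degree \(d\). Introducing the rescaled functions \(g_{j,r}(x):=r^{-d}g_j(rx)\) for \(r>0\), homogeneity of \(P\) gives
\[
 \sum_{j=1}^s g_{j,r}(x)^2 = r^{-2d}P(rx) = P(x),
\]
while the Taylor remainder implies
\[
 g_{j,r}(x) = p_j(x) + O\bigl(r^{(k+\alpha)/2-d}\,|x|^{(k+\alpha)/2}\bigr).
\]
The exponent \((k+\alpha)/2-d\) is strictly positive in every admissible case (\(\alpha/2\) when \(k\) is even and \(2d=k\); \((1+\alpha)/2\) when \(k\) is odd and \(2d=k-1\)), so letting \(r\to 0^+\) produces the polynomial identity \(P=\sum_{j=1}^s p_j^2\) with \(s<N\) summands, contradicting the polynomial lower bound.

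The main obstacle is exhibiting the polynomial example realizing the precise constant \(N\) in the claimed form; this is the purely algebraic core of the argument and where the results of \cite{Pfister} and related polynomial SOS theory must be quoted carefully. A secondary subtlety arises when \(k\) is odd, since no nonzero nonnegative form of odd degree exists and one must settle for a polynomial of degree \(k-1\); a brief numerical comparison is needed to confirm that the bound obtained then still fits within the claimed form \(2^{n-1}(k/(k+n))^{n/2}\). The vanishing of the low-order Taylor coefficients of \(g_j\) and the passage to the limit in \(\sum g_{j,r}^2=P\) are standard once the rescaling is set up.
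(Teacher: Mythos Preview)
Your approach is correct and follows the same overall strategy as the paper: exhibit a homogeneous polynomial $P$ with large polynomial Pythagoras number, note that $P\in C^{k,\alpha}\cap\Sigma(n,k,\alpha)$, and show that any half-regular SOS representation of $P$ yields a polynomial SOS of the same length via rescaling and Taylor expansion. The paper packages the last step as a more general approximation lemma (Lemma~\ref{newcoolexciting}) proved by compactness of the embedding $C_b^{(k+\alpha)/2}\hookrightarrow C_b^{k/2}$ together with an \emph{iterated} homogeneity argument that successively raises the minimal degree of the Taylor polynomials of the $g_j$. Your route is more direct: from $g_j^2\le P$ you get $|g_j(x)|\le C|x|^d$ immediately, which kills the low-order Taylor coefficients in one stroke and lets a single rescaling $r\to 0$ produce the polynomial identity. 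This is a genuine simplification for the exact-representation case needed here, at the cost of not yielding the paper's stronger approximation statement.

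Two points to tighten. First, the algebraic input you need is not in \cite{Pfister} in the precise quantitative form required; the paper invokes the Choi--Lam--Reznick bound $P(n,k)\ge 2^{n-1}\prod_{j=1}^{n-1}\frac{j+k}{2j+k}$ from \cite{CLR}, and the constant in Theorem~\ref{main 2} is obtained by weakening this product to $2^{n-1}\bigl(\tfrac{k}{k+n}\bigr)^{n/2}$ so that the odd-$k$ case (where one must take $2d=k-1$) is absorbed uniformly. You flagged this numerical check but should carry it out; it is exactly the substitution $k\mapsto k-1$ in the CLR bound. Second, you should state explicitly that the polynomial $P$ furnished by \cite{CLR} \emph{is} a sum of squares of forms of degree $d$ (not merely nonnegative), since this is what guarantees $P\in\Sigma(n,k,\alpha)$.
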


Sharper bounds for \(m(n,k,\alpha)\) arise in our proofs of the theorems above, but these are difficult to express in closed form. For instance, we show that if \(k\leq3\) then \(m(2,k,\alpha)\leq 27\). Likewise, restricting to the case \(k=2\), the estimates of Section 7 show that for any \(n\geq 1\) we have
\[
    \frac{n+1}{2}\leq m(n,2,\alpha)\leq  2^{n^2+n-1}.
\]

When \(k\geq 4\) our methods do not give an upper bound on \(m(n,k,\alpha)\), but using essentially the same techniques that we use to prove Theorem \ref{main 1} it is possible to bound the number of squares required to decompose certain \(C^{k,\alpha}(\mathbb{R}^2)\) functions. We require that these functions satisfy generalized versions of the inequalities identified by Korobenko \& Sawyer \cite[Thm. 4.5]{SOS_I}.

\begin{thm}\label{main 3}
For \(k\geq 4\) let \(f\in C^{k,\alpha}(\mathbb{R}^2)\) be non-negative, and assume that for \(x\in\mathbb{R}^n\) \(f\) satisfies
\begin{equation}\label{diffeqs}
    |\nabla^\ell f(x)|\leq Cf(x)^\frac{k-\ell+\alpha}{k+\alpha}    
\end{equation}
for every even \(\ell\geq 4\). Then \(f\) is a sum of squares of at most 27 functions in \(C^{\frac{k+\alpha}{2}}(\mathbb{R}^2)\).
\end{thm}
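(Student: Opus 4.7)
The plan is to mirror the local-to-global strategy that underlies Theorem \ref{main 1}, using the hypothesis \eqref{diffeqs} to supply the remainder estimates that hold automatically when \(k+\alpha\le 4\). First I would construct a Whitney-type covering of \(\mathbb{R}^2\) by balls \(B_i=B(x_i,c\delta(x_i))\), where \(\delta\) is a Lipschitz regularization of \(f^{1/(k+\alpha)}\) chosen so that \(\delta(x_i)^{k+\alpha}\simeq f(x_i)\) wherever \(f>0\). Attach a subordinate partition of unity \(\{\varphi_i\}\) with \(\sum_i\varphi_i^2\equiv 1\) and uniformly bounded overlap, and partition the index set into \(3^2=9\) colours \(\mathcal{I}_1,\dots,\mathcal{I}_9\) such that balls of the same colour have disjoint supports; this is the same colouring that drives the two-dimensional case of Theorem \ref{main 1}.

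The critical input is that \eqref{diffeqs} forces the third-order Taylor remainder of \(f\) to behave as it would if \(k\) were already \(\le 3\). On each ball \(B_i\), write \(f=P_i+R_i\) with \(P_i\) the degree-3 Taylor polynomial of \(f\) at \(x_i\). Each Taylor coefficient of order \(\ell\ge 4\) contributes on \(B_i\) at most \(|\nabla^\ell f(x_i)|\,\delta(x_i)^\ell\lesssim f(x_i)^{(k-\ell+\alpha)/(k+\alpha)}\delta(x_i)^\ell\simeq\delta(x_i)^{k+\alpha}\). The hypothesis is stated only for even \(\ell\), but for odd \(\ell\ge 5\) the Landau--Kolmogorov interpolation \(|\nabla^\ell f|^2\lesssim|\nabla^{\ell-1}f|\,|\nabla^{\ell+1}f|\) recovers the same bound from the even cases \(\ell-1\) and \(\ell+1\); the Hölder contribution from order \(k\) to \(k+\alpha\) is controlled by the standard Taylor remainder estimate. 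Summing the contributions gives \(|R_i(x)|\lesssim\delta(x_i)^{k+\alpha}\) on \(B_i\), together with the pointwise derivative bounds needed below.

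Since \(P_i+C\delta(x_i)^{k+\alpha}\) is a non-negative polynomial of degree at most 3 in 2 variables, the planar case of Hilbert's theorem writes it as a sum of at most 3 squares of polynomials \(q_{i,1},q_{i,2},q_{i,3}\), each of size \(O(\delta(x_i)^{(k+\alpha)/2})\) with matching derivative bounds; and the non-negative residual \(R_i+C\delta(x_i)^{k+\alpha}\) admits a half-regular square root on \(B_i\) by the Glaeser--type scaling argument used in Theorem \ref{main 1}, the output of which can be absorbed into the colour-indexed families below. Setting \(g_{c,j}=\sum_{i\in\mathcal{I}_c}\varphi_i q_{i,j}\) for \(c=1,\dots,9\) and \(j=1,2,3\) yields functions in \(C^{(k+\alpha)/2}(\mathbb{R}^2)\), since the supports are disjoint within each colour, and summing produces \(f=\sum_{c,j}g_{c,j}^2\), a decomposition into at most \(9\cdot 3=27\) squares. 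The main obstacle is the second paragraph: one must verify that the Landau--Kolmogorov reduction from even to odd orders really does yield the pointwise estimate \(|\nabla^\ell f(x_i)|\lesssim f(x_i)^{(k-\ell+\alpha)/(k+\alpha)}\) for odd \(\ell\ge 5\) (perhaps by slicing onto one-dimensional lines), and that the resulting bounds on the coefficients of \(P_i\) are compatible with the \(C^{(k+\alpha)/2}\) regularity demanded of the \(q_{i,j}\) after the partition of unity is applied.
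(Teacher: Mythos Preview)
Your nine-colour partition and the target count $9\cdot 3=27$ match the paper, but the local mechanism you propose is different and contains a genuine gap. The step ``$P_i+C\delta(x_i)^{k+\alpha}$ is a non-negative polynomial of degree at most~$3$ \dots\ the planar case of Hilbert's theorem writes it as a sum of at most $3$ squares of polynomials'' fails outright: any sum of squares of polynomials has even degree, so a polynomial with a nonzero cubic part is never a polynomial SOS. Hilbert's theorem concerns globally non-negative forms of even degree (ternary quartics in the relevant planar case), whereas your $P_i$ has degree~$3$ and is only non-negative on $B_i$. Truncating instead at degree~$2$ does not rescue the argument, because the remainder then acquires the term $|\nabla^3 f(x_i)|\,\delta(x_i)^3$, and you cannot interpolate $|\nabla^3 f|\lesssim f^{(k-3+\alpha)/(k+\alpha)}$ from $\ell=2$ and $\ell=4$: the $\ell=2$ bound $|\nabla^2 f|\lesssim f^{(k-2+\alpha)/(k+\alpha)}$ is false in general (take $f(x,y)=x^2$) and is not among the hypotheses. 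There is a related structural issue with your scale: setting $\delta\simeq f^{1/(k+\alpha)}$ makes the cubes collapse wherever $f$ is small but $\nabla^2 f$ is not, and that is exactly the regime in which a nontrivial local decomposition has to be produced.

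The paper's route avoids polynomial SOS entirely. The hypothesis \eqref{diffeqs} is used for a single algebraic purpose: after rescaling, it forces the terms with even $j\ge 4$ in the maximum defining $r$ in \eqref{eq:controlfunc} to be dominated by the $j=0$ term, so that \eqref{eq:needed} holds. With \eqref{eq:needed} in hand, the Section~4 machinery (Lemmas \ref{lem:minia}--\ref{lem:rootscors}) applies verbatim for arbitrary $k$: on each cube either $\psi_j\sqrt{f}\in C^{(k+\alpha)/2}$ directly, or the implicit function theorem locates the fibrewise minimum $X(y')$ and yields $\psi_j^2 f=(\psi_j\sqrt{f-F_j})^2+\psi_j^2 F_j$ with $F_j\in C^{k,\alpha}(\mathbb{R})$ a function of one variable. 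Bony's one-dimensional theorem then writes $F_j$ as two half-regular squares, giving three squares per cube and $9\cdot 3=27$ after recombination via property~\textit{(5)} of Theorem~\ref{thm:party}. No Taylor truncation, no Landau--Kolmogorov, and no Hilbert SOS theorem enter.
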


It is not clear whether \(m(n,k,\alpha)\) actually depends on \(\alpha\) in general; the bounds above are all independent of \(\alpha\), but this does not preclude some dependency which our techniques overlook. In the case \(k=1\) however, matters are considerably simpler.

\begin{thm}\label{main 4}
If \(\alpha\in(0,1]\), \(n\geq 1\) and \(f\in C^{1,\alpha}(\mathbb{R}^n)\) is non-negative, then \(\sqrt{f}\in C^{\frac{1+\alpha}{2}}(\mathbb{R}^n)\). That is, for any \(\alpha\in (0,1]\) and \(n\in\mathbb{N}\) we have \(m(n,1,\alpha)=1\).
\end{thm}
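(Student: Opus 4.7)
My strategy is a two-step reduction. First I would establish a Glaeser-type inequality for non-negative \(C^{1,\alpha}\) functions, namely
\[
    |\nabla f(x)|\leq C\, f(x)^{\alpha/(1+\alpha)},
\]
with \(C\) depending only on \(\alpha\) and the Hölder seminorm \([\nabla f]_{C^{0,\alpha}}\). To prove it, fix \(x\) with \(\nabla f(x)\neq 0\) and apply Taylor's theorem to \(t\mapsto f(x-tv)\) where \(v=\nabla f(x)/|\nabla f(x)|\), using the \(C^{0,\alpha}\) bound on \(\nabla f\) to control the remainder. Non-negativity of \(f\) forces
\[
    0\leq f(x)-t|\nabla f(x)|+\tfrac{M}{1+\alpha}\, t^{1+\alpha}
\]
for all \(t>0\); optimizing in \(t\) yields the claimed inequality. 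When \(\alpha=1\) this recovers the classical Glaeser estimate.

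Second, I would combine the factorization \(\sqrt{f(x)}-\sqrt{f(y)}=(f(x)-f(y))/(\sqrt{f(x)}+\sqrt{f(y)})\) with a case analysis based on the size of \(f\). Fix \(x,y\) with \(r=|x-y|\), and assume WLOG \(f(x)\geq f(y)\). In the \emph{small} case \(f(x)\leq r^{1+\alpha}\), both square roots are bounded by \(r^{(1+\alpha)/2}\) directly, so the triangle inequality finishes. In the \emph{large} case \(f(x)>r^{1+\alpha}\), I estimate
\[
    |\sqrt{f(x)}-\sqrt{f(y)}|\leq \frac{|f(x)-f(y)|}{\sqrt{f(x)}}\leq\frac{|\nabla f(x)|\,r+C r^{1+\alpha}}{\sqrt{f(x)}}
\]
using Taylor expansion with remainder at \(x\). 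The second term is \(\leq C r^{(1+\alpha)/2}\) because \(\sqrt{f(x)}> r^{(1+\alpha)/2}\). For the first term, the Glaeser estimate gives \(|\nabla f(x)|/\sqrt{f(x)}\leq C f(x)^{(\alpha-1)/(2(1+\alpha))}\); since \(\alpha\leq 1\) makes this exponent non-positive, the lower bound \(f(x)>r^{1+\alpha}\) yields a term of order \(r^{(\alpha-1)/2}\cdot r = r^{(1+\alpha)/2}\).

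The main obstacle is the Glaeser-type inequality itself; it is delicate because it crucially uses both the Hölder bound on \(\nabla f\) and the non-negativity of \(f\), and it is the only place where any differentiability of \(f\) actually enters the argument. Once this ingredient is in hand, the \(C^{0,(1+\alpha)/2}\) estimate for \(\sqrt{f}\) reduces to elementary manipulations, and the bounds above are uniform in \(x,y\in\mathbb{R}^n\). Since \((1+\alpha)/2\in(1/2,1]\) and \(k=1\) is odd, \eqref{halfreg} identifies \(C^{(1+\alpha)/2}(\mathbb{R}^n)\) with \(C^{0,(1+\alpha)/2}(\mathbb{R}^n)\); the conclusion is that \(\sqrt{f}\) lies in this space and hence \(f=(\sqrt{f})^2\) is a single half-regular square, so \(m(n,1,\alpha)=1\).
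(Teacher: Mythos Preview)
Your proposal is correct and follows essentially the same route as the paper: both arguments first establish the Glaeser-type inequality \(|\nabla f(x)|\le C f(x)^{\alpha/(1+\alpha)}\) by applying a first-order Taylor expansion with H\"older remainder, using non-negativity of \(f\), and optimizing over the step length. The paper then concludes in one line ``by the mean value theorem,'' whereas your large/small case split on \(f(x)\) versus \(r^{1+\alpha}\) spells out explicitly how the gradient bound converts into the \(C^{0,(1+\alpha)/2}\) estimate for \(\sqrt{f}\); this is the same mechanism, only made more transparent.
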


The remainder of the paper is organized as follows. In Sections 2 and 3 we state preliminary results, and in Section 4 we study localized decompositions of \(C^{k,\alpha}(\mathbb{R}^n)\) functions by generalizing the technique of Fefferman \& Phong \cite{Fefferman-Phong}. Using these preliminaries, the remaining Sections 5 through 9 are devoted to proving our main theorems.

Many of the techniques in this paper are refined forms of those introduced in \cite{BonyFr,BonyEn,Fefferman-Phong,SOS_I,Tataru}, and the other cited works on SOS decompositions. Our results enjoy considerable generality thanks to the hard work of these authors, who pioneered the results that we can now build upon.

\section{Preliminaries}

\subsection*{H\"older Spaces}

We define H\"older spaces in standard fashion. Given \(\alpha\in(0,1]\) and a domain \(\Omega\subseteq\mathbb{R}^n\), the H\"older space \(C^\alpha(\Omega)\) is the real vector space of functions \(f:\Omega\rightarrow\mathbb{R}\) for which
\[
    [f]_{\alpha,\Omega}=\sup_{\substack{x,y\in\Omega\\x\neq y}}\frac{|f(x)-f(y)|}{|x-y|^\alpha}<\infty.
\]
To describe higher-order H\"older spaces concisely we use multi-index notation. A multi-index is an \(n\)-tuple \(\beta\in \mathbb{N}_0^n\), and given \(\beta=(\beta_1,\dots,\beta_n)\) the order of \(\beta\) is given by \(|\beta|=\beta_1+\cdots\beta_n\). The \(\beta\)-derivative of a function \(f\) is formally defined by
\[
    \partial^\beta f=\frac{\partial^{|\beta|}f}{\partial^{\beta_1}x_1\cdots\partial^{\beta_n}x_n}.
\]
Given \(k\in\mathbb{N}\), the H\"older space \(C^{k,\alpha}(\Omega)\) is defined to be the collection of functions \(f:\Omega\rightarrow\mathbb{R}\) for which \([\partial^\beta f]_{\alpha,\Omega}<\infty\) whenever \(|\beta|=k\).

In our later arguments, it will be helpful to employ a pointwise variant of the H\"older semi-norm. Introduced in \cite{BonyFr} and used in \cite{SOS_I}, this object is defined for a function \(f:\Omega\rightarrow\mathbb{R}\) and \(x\in\Omega\) by setting
\[
    [f]_\alpha(x)=\limsup_{z,y\rightarrow x}\frac{|f(z)-f(y)|}{|z-y|^\alpha}.
\]
It is straightforward to show that \([f]_\alpha(x)\leq [f]_{\alpha,\Omega}\) and indeed, that \([f]_{\alpha,\Omega}=\displaystyle\sup_{x\in\Omega}[f]_\alpha(x)\).

We also recall a useful property regarding compact embeddings of H\"older spaces. Denoting by \(C_b^{k,\alpha}(\Omega)\) the subspace of \(C^{k,\alpha}(\Omega)\) comprised functions for which
\[
    \|f\|_{C^{k,\alpha}_b(\Omega)}=\sum_{|\beta|\leq k}\sup_\Omega|\partial^\beta f|+\sum_{|\beta|=k}[\partial^\beta f]_{\alpha,\Omega}<\infty,
\]
it is well-known that \(C^{k,\alpha}_b(\Omega)\) is compactly embedded in lower-order H\"older spaces when one restricts to precompact domains. The following result makes this notion precise; it is a variant of classical results like \cite[Lem. 6.33]{GilbargTrudinger} and \cite[Thm. 24.14]{Driver}.

\begin{lem}\label{lem:cptmb}
Let \(0\leq\alpha,\beta\leq 1\), and let \(j\) and \(k\) be non-negative integers. If \(\Omega\) is precompact and \(k+\alpha<j+\beta\), then \(C_b^{j,\beta}(\overline{\Omega})\) is compactly embedded in \(C_b^{k,\alpha}(\overline{\Omega})\).
\end{lem}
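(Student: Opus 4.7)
The plan is to establish sequential compactness of the inclusion: I will show that any bounded sequence \(\{f_m\} \subset C_b^{j,\beta}(\overline{\Omega})\) admits a subsequence converging in \(C_b^{k,\alpha}(\overline{\Omega})\). Two ingredients are needed: a diagonal Arzel\`a--Ascoli extraction to control the \(C^k\) portion of the target norm (the suprema of all derivatives of order \(\le k\)), and an interpolation argument to promote uniform convergence of top-order derivatives into \(\alpha\)-H\"older convergence of their differences.

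For the first ingredient, fix a multi-index \(\gamma\) with \(|\gamma| \le k\). The family \(\{\partial^\gamma f_m\}\) is uniformly bounded by the given \(C_b^{j,\beta}\) bound, and it is equicontinuous on \(\overline{\Omega}\): either \(|\gamma| < j\), in which case \(\partial^\gamma f_m\) has uniformly bounded first-order derivatives (of total order at most \(j\)) and is hence uniformly Lipschitz; or \(|\gamma| = j\), which forces \(k = j\) and means \(\partial^\gamma f_m\) is itself uniformly \(\beta\)-H\"older. Since \(\overline{\Omega}\) is compact, Arzel\`a--Ascoli together with a diagonalization over the finitely many multi-indices with \(|\gamma|\le k\) yields a subsequence (still denoted \(\{f_m\}\)) with \(\partial^\gamma f_m\) converging uniformly to some \(\partial^\gamma f\) for every such \(\gamma\), and the limit \(f\) lies in \(C^k(\overline{\Omega})\).

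For the second ingredient, I would apply the elementary factorization
\[
    \frac{|g(x)-g(y)|}{|x-y|^\alpha} = \left(\frac{|g(x)-g(y)|}{|x-y|^{\beta'}}\right)^{\alpha/\beta'} |g(x)-g(y)|^{1-\alpha/\beta'}
\]
to \(g = \partial^\gamma(f_m - f_\ell)\) with \(|\gamma|=k\), taking \(\beta' = \beta\) when \(k = j\) and \(\beta' = 1\) (via the uniform Lipschitz bound on order-\(k\) derivatives coming from one spare differentiation) when \(k < j\). This yields \([g]_\alpha \le [g]_{\beta'}^{\alpha/\beta'}(2\|g\|_\infty)^{1-\alpha/\beta'}\); since \([g]_{\beta'}\) is uniformly bounded while \(\|g\|_\infty \to 0\) by Step~1, and since the hypothesis \(k + \alpha < j + \beta\) forces \(\alpha < \beta'\), we conclude \([g]_\alpha \to 0\) as \(m,\ell\to\infty\). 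Together with the uniform convergence of derivatives of lower order, this shows \(\{f_m\}\) is Cauchy, hence convergent, in \(C_b^{k,\alpha}(\overline{\Omega})\).

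The main bookkeeping challenge is the case split between \(k < j\) and \(k = j\), since the sources of both equicontinuity and the auxiliary exponent \(\beta'\) differ; the strict inequality \(k + \alpha < j + \beta\) is precisely what rules out the degenerate situation \(k = j\) with \(\alpha \ge \beta\), in which no interpolation room remains. The one corner case that is not directly covered by the above, namely \(\alpha = 1\) with \(k < j\), is handled by running Arzel\`a--Ascoli one level higher on derivatives of order \(\le k+1\) (available because \(k+1 \le j\) follows from the hypothesis), from which uniform convergence of the gradient gives \(C_b^{k,1}\) convergence directly. I do not anticipate any deeper obstacle, as each component is classical and the statement is a mild adaptation of the Gilbarg--Trudinger embedding already cited.
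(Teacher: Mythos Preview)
The paper does not supply a proof of this lemma; it is stated as a well-known result and attributed to \cite[Lem.~6.33]{GilbargTrudinger} and \cite[Thm.~24.14]{Driver}. Your outline is the standard argument used in those references---Arzel\`a--Ascoli for uniform convergence of derivatives up to order \(k\), followed by the interpolation inequality \([g]_\alpha \le [g]_{\beta'}^{\alpha/\beta'}(2\|g\|_\infty)^{1-\alpha/\beta'}\) to upgrade uniform convergence to \(\alpha\)-H\"older convergence---and your case split on \(k<j\) versus \(k=j\), together with the separate treatment of \(\alpha=1\), is handled correctly. One small caveat worth making explicit: when \(k<j\) you pass from ``bounded gradient'' to ``uniformly Lipschitz'' on \(\overline{\Omega}\), which implicitly uses enough regularity of the domain (e.g.\ convexity or a quasiconvexity condition) to apply the mean value inequality; the cited references impose such hypotheses, and the paper's later use of the lemma is only on balls, so this is harmless here but should be stated if you write the argument out in full.
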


In other words, any bounded sequence in \(C_b^{k,\alpha}(\overline{\Omega})\) has a convergent subsequence in \(C_b^{j,\beta}(\overline{\Omega})\). This will be useful in Section \ref{sec:poly}.

\subsection*{Multi-Index Calculus}

To perform the necessary estimates for our main regularity results, we introduce additional terminology and notation for working with multi-index derivatives. Given \(k\)-times differentiable functions \(f\) and \(g\) and a multi-index \(\beta\in\mathbb{N}_0^n\) for which \(|\beta|=k\), the generalized product rule reads
\[
    \partial^\beta (fg)=\sum_{\gamma\leq\beta}\binom{\beta}{\gamma}(\partial^{\gamma }f)(\partial^{\beta-\gamma}g).
\]
The subscript \(\gamma\leq\beta\) indicates that we sum over all multi-indices \(\gamma\) of length \(n\) which are smaller than \(\beta\) in the partial order defined by setting \((\gamma_1,\dots,\gamma_n)\leq(\beta_1,\dots,\beta_n)\) if and only if \(\gamma_j\leq \beta_j\) for each \(j=1,\dots,n\). The generalized binomial coefficient is given by
\[
    \binom{\beta}{\gamma}=\frac{\beta!}{\gamma!(\beta-\gamma)!},
\]
where \(\beta!=\beta_1!\cdots\beta_n!\). As an easy consequence of the formula above and the definition of the pointwise semi-norm, we have a variant of the sub-product rule employed in \cite{SOS_I}:
\[
    [\partial^\beta(fg)]_{\alpha}(x)\leq\sum_{\gamma\leq\beta}\binom{\beta}{\gamma}([\partial^{\beta-\gamma}f]_{\alpha}(x)|\partial^\gamma g(x)|+[\partial^{\beta-\gamma}g]_{\alpha}(x)|\partial^\gamma f(x)|).
\]

Along with the product and sub-product rules, we require a version of the chain rule adapted to higher-order derivatives. Many variants appear in the literature, see e.g. \cite[(3.4)]{SOS_I}, and we use one adapted to multi-indices. To state it concisely we use multi-index partitions. A multi-set \(\Gamma\) of multi-indices is called a partition of \(\beta\) if \(|\gamma|>0\) for each \(\gamma\in\Gamma\), and 
\[
    \sum_{\gamma\in\Gamma}\gamma=\beta.
\]
Several multi-indices in the sum above can be identical. Given a multi-index \(\beta\), we denote the set of its distinct partitions by \(P(\beta)\).

\begin{lem}[Chain Rule]\label{lem:genchain}
Let \(f:\mathbb{R}^{n+1}\rightarrow\mathbb{R}\) and \(g:\mathbb{R}^{n}\rightarrow\mathbb{R}\) both be \(k\) times differentiable, and define \(h:\mathbb{R}^n\rightarrow\mathbb{R}\) by \(h(x)=f(x,g(x))\). For any multi-index \(\beta\) of order \(|\beta|\leq k\) and length \(n\), there exist constants \(C_{\beta,\Gamma}\) for which
\begin{equation}\label{eq:holycow}
    \partial^\beta h =\sum_{\eta\leq\beta}\sum_{\Gamma\in P(\eta)}C_{\beta,\Gamma}(\partial^{\beta-\eta}\partial^{|\Gamma|}_{n+1}f)\prod_{\gamma\in\Gamma}\partial^\gamma g,
\end{equation}
where \(\partial^\beta h\) is evaluated at \(x\) and the derivatives of \(f\) are evaluated at \((x,g(x))\). 
\end{lem}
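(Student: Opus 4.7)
The plan is to proceed by induction on the order $|\beta|$. The base case $|\beta|=0$ is immediate: the only partition of the zero multi-index is the empty one (with $|\Gamma|=0$ and empty product equal to $1$), so the sum collapses to $h(x)=f(x,g(x))$ with constant $1$, matching the claim.

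For the inductive step, assume the formula holds for all multi-indices of order at most $k$, and take $\beta'=\beta+e_j$ with $|\beta|=k$ (so $|\beta'|=k+1$), where $e_j$ is the $j$-th standard unit multi-index. I would write $\partial^{\beta'}h=\partial_j\partial^\beta h$, substitute the inductive hypothesis, and compute $\partial_j$ of each summand $(\partial^{\beta-\eta}\partial^{|\Gamma|}_{n+1}f)\prod_{\gamma\in\Gamma}\partial^\gamma g$ by the Leibniz rule. Applying the ordinary chain rule to the composite $(\partial^{\beta-\eta}\partial^{|\Gamma|}_{n+1}f)(x,g(x))$ produces exactly two kinds of contributions: a term where the outer derivative bumps the $x$-index from $\beta-\eta$ to $\beta-\eta+e_j=\beta'-\eta$, and a term where the $(n+1)$-th slot is differentiated and multiplied by $\partial^{e_j}g$. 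Differentiating the product $\prod_{\gamma\in\Gamma}\partial^\gamma g$ yields a sum in which some factor $\partial^{\gamma_0}g$ is replaced by $\partial^{\gamma_0+e_j}g$.

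The key bookkeeping step is to show that each of these three contributions has the required form indexed by some $\eta'\le\beta'$ and $\Gamma'\in P(\eta')$. Reading off the possibilities: (i) the $x$-bump gives $\eta'=\eta$ and $\Gamma'=\Gamma$, which is a partition of $\eta'$ with $\beta'-\eta'=\beta-\eta+e_j$; (ii) the $(n+1)$-differentiation gives $\eta'=\eta+e_j$ and $\Gamma'=\Gamma\cup\{e_j\}$, which sums to $\eta'$ and has $|\Gamma'|=|\Gamma|+1$; (iii) bumping $\gamma_0\in\Gamma$ gives $\eta'=\eta+e_j$ and $\Gamma'=(\Gamma\setminus\{\gamma_0\})\cup\{\gamma_0+e_j\}$, again a partition of $\eta'$ with $|\Gamma'|=|\Gamma|$. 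Summing over $\eta\le\beta$ and $\Gamma\in P(\eta)$, grouping by the resulting pair $(\eta',\Gamma')$, and absorbing the inherited constants and multiplicities into new constants $C_{\beta',\Gamma'}$ yields the desired formula at order $k+1$.

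The main obstacle is purely combinatorial: one must verify that every pair $(\eta',\Gamma')$ with $\eta'\le\beta'$ and $\Gamma'\in P(\eta')$ can actually be reached by at least one of the three operations (so that the existence of $C_{\beta',\Gamma'}$ is unambiguous), and that multi-set conventions for $P(\eta)$ are handled consistently when a partition contains repeated entries — otherwise the constants in (iii) require a multiplicity factor counting how many $\gamma_0\in\Gamma$ give the same $\Gamma'$. Since the statement only asserts the existence of constants (not specific values), these multiplicity factors simply absorb into $C_{\beta',\Gamma'}$, and no delicate cancellation is needed.
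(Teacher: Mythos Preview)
The paper does not actually supply a proof of this lemma: it states the formula, remarks that the constants can be found by counting partitions, and refers the reader to \cite{hardy,SOS_I} for equivalent versions. So there is no paper proof to compare against; your task was simply to furnish one, and the inductive argument you outline is the standard and correct way to do it.

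Your three cases (i)--(iii) correctly exhaust what happens when $\partial_j$ hits a summand, and each produces a term of the required shape indexed by some $(\eta',\Gamma')$ with $\eta'\le\beta'$ and $\Gamma'\in P(\eta')$. One small point: the ``obstacle'' you flag in the last paragraph is not really one. You do \emph{not} need to verify that every admissible pair $(\eta',\Gamma')$ is reached by one of the three operations; if some pair is never produced, you simply set $C_{\beta',\Gamma'}=0$. The lemma only asserts existence of constants, so unreached pairs are harmless, and the multiplicity factors you mention for repeated entries in a multiset partition are, as you say, absorbed into the constants. With that clarification, the induction closes without difficulty.
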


The constants \(C_{\beta,\Gamma}\) can be found explicitly by counting multi-index partitions. Alternate but equivalent forms of the expression above for \(\partial^\beta h\) can be found in \cite{hardy,SOS_I}. For our purposes the values of constants like \(C_{\beta,\Gamma}\) are unimportant, and henceforth we follow standard convention by letting \(C\) denote a positive constant whose value may change from line to line, but which depends on fixed parameters such as \(n\), \(k\) and \(\alpha\). When the values of constants are significant, they are emphasized.

A useful consequence of Lemma \ref{lem:genchain} is a recursive formula for the derivatives of implicitly defined functions, which we include here with the standard implicit function theorem from \cite{IFTref}.

\begin{thm}[Implicit Function Theorem]\label{thm:IFT}
Let \(G\in C^k(\mathbb{R}^{n+1})\) and let \((x_0,z_0)\in\mathbb{R}^n\times\mathbb{R}\) be a point at which \(G(x_0,z_0)=0\) and \(\frac{\partial G}{\partial x_n}(x_0,z_0)>0\). There exists a unique \(g\in C^k(U)\), for some neighbourhood \(U\subset\mathbb{R}^{n}\) of \(x_0\), such that \(G(x,g(x))=0\) for every \(x\in U\). Further, the derivatives of \(g\) are given recursively for constants \(C_{\beta,\Gamma}\) by
\[
    \partial^\beta g=-\frac{1}{\partial_{n}G}\sum_{0\leq\eta\leq\beta}\sum_{ \substack{\Gamma\in P(\eta)\setminus\{\beta\}}}C_{\beta,\Gamma}(\partial^{\beta-\eta}\partial^{|\Gamma|}_nG)\prod_{\gamma\in\Gamma}\partial^\gamma g,
\]
where \(\partial^\beta g\) is evaluated at \(x\) and the derivatives of \(G\) are all evaluated at \((x,g(x))\).
\end{thm}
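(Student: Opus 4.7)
The existence and uniqueness of $g \in C^k(U)$ follow from the classical implicit function theorem cited from \cite{IFTref}, so the only new content to verify is the recursive formula. My plan is to differentiate the identity $G(x, g(x)) = 0$ using the multi-index chain rule of Lemma \ref{lem:genchain}, then algebraically isolate $\partial^\beta g$ in the resulting expansion.

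Fix $\beta$ with $1 \leq |\beta| \leq k$ and apply $\partial^\beta$ to both sides of $G(x, g(x)) = 0$. Lemma \ref{lem:genchain} yields
\[
    0 = \sum_{\eta \leq \beta} \sum_{\Gamma \in P(\eta)} C_{\beta,\Gamma}\, (\partial^{\beta-\eta} \partial^{|\Gamma|}_{n+1} G) \prod_{\gamma \in \Gamma} \partial^\gamma g,
\]
with derivatives of $G$ evaluated at $(x, g(x))$ and derivatives of $g$ at $x$. The crucial observation is that $\partial^\beta g$ appears in this sum only through the single pair $(\eta, \Gamma) = (\beta, \{\beta\})$: an occurrence of $\partial^\beta g$ forces $\beta \in \Gamma$, but since the elements of any partition are positive multi-indices summing to $\eta \leq \beta$, the only possibility is $\Gamma = \{\beta\}$ and $\eta = \beta$. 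For this pair one has $|\Gamma| = 1$, and after normalizing $C_{\beta,\{\beta\}}=1$ (which one may do, as Lemma \ref{lem:genchain} leaves the constants free) the contribution is $(\partial_{n+1} G)(x, g(x)) \cdot \partial^\beta g(x)$.

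To close the argument, I would shrink $U$ so that $\partial_{n+1} G(x, g(x))$ remains positive throughout; this is possible by continuity of $G$ and $g$ together with the hypothesis $\partial_{n+1} G(x_0, z_0) > 0$. Isolating the distinguished term and dividing by the non-zero quantity $\partial_{n+1} G$ produces the stated formula, with the residual summation ranging over every pair other than $(\beta, \{\beta\})$. A parallel size argument — positive summands summing to something bounded above by $\beta$ — shows that in every remaining term one has $|\gamma| < |\beta|$ for each $\gamma \in \Gamma$, so the identity genuinely expresses $\partial^\beta g$ in terms of derivatives of $g$ of strictly lower order. The main subtlety, in fact the only subtlety, is this combinatorial bookkeeping; no analytic input beyond Lemma \ref{lem:genchain} and the classical implicit function theorem is needed.
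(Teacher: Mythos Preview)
Your proposal is correct and matches the paper's intended derivation: the paper does not supply a proof but presents Theorem~\ref{thm:IFT} explicitly as ``a useful consequence of Lemma~\ref{lem:genchain}\ldots with the standard implicit function theorem from \cite{IFTref},'' which is precisely the route you take. One minor remark: the constants $C_{\beta,\Gamma}$ in Lemma~\ref{lem:genchain} are not free parameters to be normalized---they are determined by the chain rule---but your conclusion $C_{\beta,\{\beta\}}=1$ is nonetheless correct, since this term arises exactly once when every differentiation falls on the inner function $g$.
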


In the ensuing sections we will also make H\"older estimates for the derivatives of functions, and this can be done with the help of the following inequality.

\begin{lem}\label{lem:Taylorest}
Let \(f\in C^{k,\alpha}(\mathbb{R}^n)\). If \(|\beta|<k\) then for any \(x,y\in\mathbb{R}^n\) the following estimate holds:
\[
    |\partial^\beta f(x)-\partial^\beta f(y)|\leq \sum_{1\leq  |\gamma|\leq k-|\beta|}\frac{1}{\gamma!}|x-y|^{|\gamma|}|\partial^{\beta+\gamma} f(x)|+C|x-y|^{k-|\beta|+\alpha}.    
\]
\end{lem}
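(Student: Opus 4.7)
The plan is to apply a multi-variable Taylor expansion of \(\partial^\beta f\) centred at \(x\) and evaluated at \(y\), then bound the remainder using the H\"older regularity of the top-order derivatives. Since \(\partial^\beta f\in C^{k-|\beta|,\alpha}(\mathbb{R}^n)\), Taylor's theorem (with integral remainder, grouped so the polynomial part includes all derivatives up to order \(k-|\beta|\) evaluated at \(x\)) yields
\[
\partial^\beta f(y)=\sum_{|\gamma|\leq k-|\beta|}\frac{(y-x)^\gamma}{\gamma!}\partial^{\beta+\gamma}f(x)+R(x,y),
\]
where
\[
R(x,y)=\sum_{|\sigma|=k-|\beta|}\frac{(y-x)^\sigma}{\sigma!}(k-|\beta|)\int_0^1(1-t)^{k-|\beta|-1}\bigl[\partial^{\beta+\sigma}f(x+t(y-x))-\partial^{\beta+\sigma}f(x)\bigr]\,dt.
\]

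Next I would estimate \(R(x,y)\). Each term in the sum has \(|\beta+\sigma|=k\), so the bracketed difference is controlled by \([\partial^{\beta+\sigma}f]_{\alpha,\mathbb{R}^n}|t(y-x)|^\alpha\). Since \(|(y-x)^\sigma|\leq|x-y|^{|\sigma|}=|x-y|^{k-|\beta|}\) and the integral \(\int_0^1 t^\alpha(1-t)^{k-|\beta|-1}dt\) is a finite constant depending only on \(k,|\beta|,\alpha\), this gives \(|R(x,y)|\leq C|x-y|^{k-|\beta|+\alpha}\) for a constant \(C\) depending on \(n,k,\alpha\) and \([\partial^{\beta+\sigma}f]_{\alpha,\mathbb{R}^n}\).

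Finally, isolating the \(|\gamma|=0\) term (which contributes \(\partial^\beta f(x)\)) and applying the triangle inequality together with the monomial bound \(|(y-x)^\gamma|\leq|x-y|^{|\gamma|}\) yields
\[
|\partial^\beta f(x)-\partial^\beta f(y)|\leq\sum_{1\leq|\gamma|\leq k-|\beta|}\frac{1}{\gamma!}|x-y|^{|\gamma|}|\partial^{\beta+\gamma}f(x)|+C|x-y|^{k-|\beta|+\alpha},
\]
as claimed. There is no real obstacle here; the only modest care required is verifying the Taylor expansion in the form above (with all polynomial terms at \(x\) and the remainder written as an increment of the top derivatives), which is a standard rewriting of the integral remainder using the fundamental theorem of calculus applied to \(t\mapsto\partial^{\beta+\sigma}f(x+t(y-x))\).
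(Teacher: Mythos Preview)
Your proposal is correct and follows essentially the same approach as the paper: both expand \(\partial^\beta f\) via Taylor's theorem with integral remainder centred at \(x\), rewrite the remainder as an increment of the top-order derivatives, and bound that increment using the \(C^{k,\alpha}\) regularity. The only cosmetic difference is that you state the Taylor expansion directly in the ``already rewritten'' form (polynomial part up to order \(k-|\beta|\) at \(x\), remainder as an increment), whereas the paper first writes the standard integral remainder and then performs the add-and-subtract step explicitly.
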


\begin{proof}
Let \(\beta\) be any multi-index of length \(n\) and order \(|\beta|<k\). We begin by using Taylor's theorem to form a Taylor expansion for \(\partial^\beta f\) evaluated at \(y\) and centred at \(x\),
\[
    \partial^\beta f(y)=\sum_{0\leq |\gamma|<k-|\beta|}\frac{1}{\gamma!}\partial^{\beta+\gamma} f(x)(y-x)^\gamma+\sum_{|\gamma|=k-|\beta|}\frac{|\gamma|}{\gamma!}(y-x)^\gamma\int_0^1(1-t)^{|\gamma|-1}\partial^{\beta+\gamma}f(x+t(y-x))dt.
\]
To refine this, we observe that the integral on the right-hand side can be rewritten in the form
\[
    \frac{1}{|\gamma|}\partial^{\beta+\gamma}f(x)+\int_0^1(1-t)^{|\gamma|-1}(\partial^{\beta+\gamma}f(x+t(y-x))-\partial^{\beta+\gamma}f(x))dt.
\]
Consequently, if \(|\beta|+|\gamma|=k\) we may use that \(f\in C^{k,\alpha}(\Omega)\) by assumption to make the estimate 
\[
    |\gamma|\int_0^1(1-t)^{|\gamma|-1}\partial^{\beta+\gamma}f(x+t(y-x))dt\leq |\partial^{\beta+\gamma}f(x)|+[\partial^{\beta+\gamma}f]_{\alpha,\Omega}|\gamma||x-y|^\alpha\int_0^1(1-t)^{|\gamma|-1}t^\alpha dt.
\]
The integral on the right-hand side is bounded by \(1/|\gamma|\), meaning that the item on the left is bounded by \(|\partial^{\beta+\gamma}f(x)|+[\partial^{\beta+\gamma}f]_{\alpha,\Omega}|x-y|^\alpha\). Rearranging our initial Taylor expansion and applying this inequality gives the claimed estimate with \(C=\sum_{|\gamma|=k-|\beta|}[\partial^{\beta+\gamma}f]_{\alpha,\mathbb{R}^n}/\gamma!\).
\end{proof}

Along with derivatives, we also find it useful to make H\"older estimates for powers of functions. These are enabled by the following result.

\begin{lem}\label{lem:almostLipschitz}
Let \(f\in C^{\alpha}(\mathbb{R}^n)\) be non-negative, let \(s>1\) and fix \(\varepsilon>0\). Then for any \(x,y\in\mathbb{R}^n\),
\[
    |f(x)^s-f(y)^s|\leq \frac{(1+\varepsilon)[f]_{\alpha,\mathbb{R}^n}^s}{((1+\varepsilon)^\frac{1}{s-1}-1)^{s-1}}|x-y|^{s\alpha}+\varepsilon\max\{f(x)^s,f(y)^s\}.
\]
\end{lem}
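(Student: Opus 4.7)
The plan is to reduce the claim to a purely algebraic inequality in $a := f(x)$ and $b := f(y)$. Since the statement is symmetric in $x$ and $y$, I may assume $a \geq b \geq 0$, in which case $|f(x)^s - f(y)^s| = a^s - b^s$ and the H\"older hypothesis gives $0 \leq a - b \leq [f]_{\alpha,\mathbb{R}^n}|x-y|^\alpha$. Setting $\mu := (1+\varepsilon)^{1/(s-1)} > 1$, one checks directly that $\mu^{s-1} - 1 = \varepsilon$ and that $\left(\mu/(\mu-1)\right)^{s-1}$ equals the coefficient $(1+\varepsilon)/((1+\varepsilon)^{1/(s-1)}-1)^{s-1}$ appearing in the statement. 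It therefore suffices to prove
\[
a^s - b^s \leq (\mu^{s-1} - 1)\, b^s + \left(\frac{\mu}{\mu-1}\right)^{s-1}(a-b)^s
\]
for $a \geq b \geq 0$, after which the lemma follows by bounding $b^s \leq \max\{a^s, b^s\}$ and $(a-b)^s \leq [f]_{\alpha,\mathbb{R}^n}^s |x-y|^{s\alpha}$.

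The key step is to exploit convexity of $t \mapsto t^s$ on $[0,\infty)$ through a carefully chosen convex combination. I would write
\[
a = \frac{1}{\mu}\cdot(\mu b) + \frac{\mu - 1}{\mu}\cdot\frac{\mu(a-b)}{\mu-1},
\]
so that $a$ appears as a convex combination of $\mu b$ and $\mu(a-b)/(\mu-1)$ with weights $1/\mu$ and $(\mu-1)/\mu$. Applying Jensen's inequality to $t \mapsto t^s$ then yields
\[
a^s \leq \frac{1}{\mu}(\mu b)^s + \frac{\mu - 1}{\mu}\left(\frac{\mu(a-b)}{\mu-1}\right)^{\!s} = \mu^{s-1} b^s + \left(\frac{\mu}{\mu-1}\right)^{\!s-1}(a-b)^s,
\]
and subtracting $b^s$ from both sides produces exactly the required algebraic inequality.

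There is no serious obstacle beyond choosing the weights in the convex combination correctly. A softer route via the mean value theorem $a^s - b^s \leq s a^{s-1}(a-b)$ followed by weighted Young's inequality also delivers an estimate of the form $\varepsilon\,\max\{a^s, b^s\} + C(\varepsilon,s)\,[f]_{\alpha,\mathbb{R}^n}^s |x-y|^{s\alpha}$, but the constant $C(\varepsilon,s)$ produced that way does not collapse to the exact expression stated. The merit of the Jensen step with weight $1/\mu$ is that the algebraic identity $\mu^{s-1} = 1+\varepsilon$ forces the two coefficients on the right-hand side of Jensen's bound to align precisely with the coefficients $\varepsilon$ and $(1+\varepsilon)/((1+\varepsilon)^{1/(s-1)}-1)^{s-1}$ demanded by the lemma.
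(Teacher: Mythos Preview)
Your proof is correct and is essentially the same as the paper's. The paper writes the convex combination with a parameter $\lambda\in(0,1)$ and then chooses $\lambda=1-(1+\varepsilon)^{-1/(s-1)}$ at the end, whereas you introduce $\mu=(1+\varepsilon)^{1/(s-1)}$ from the outset; these are related by $\lambda=(\mu-1)/\mu$, and the Jensen step and resulting constants coincide exactly.
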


\begin{proof}
For a number \(0<\lambda<1\) to be chosen momentarily, observe that convexity gives
\[
    f(x)^s\leq\frac{1}{\lambda^s}(\lambda|f(x)-f(y)|+\lambda f(y))^s\leq \frac{1}{\lambda^{s-1}}|f(x)-f(y)|^s+\frac{1}{(1-\lambda)^{s-1}} f(y)^s.
\]
Rearranging and applying the H\"older estimate \(|f(x)-f(y)|^s\leq [f]_{\alpha,\mathbb{R}^n}^s|x-y|^{s\alpha}\) thus shows that
\[
    f(x)^s-f(y)^s\leq \frac{[f]_{\alpha,\mathbb{R}^n}^s}{\lambda^{s-1}}|x-y|^{s\alpha}+\bigg(\frac{1}{(1-\lambda)^{s-1}}-1\bigg)f(y)^s.
\]
Next select \(\lambda=1-(1+\varepsilon)^{-\frac{1}{s-1}}\), so that the coefficient on the last term is exactly \(\varepsilon\) and
\[
    f(x)^s-f(y)^s\leq \frac{(1+\varepsilon)[f]_{\alpha,\mathbb{R}^n}^s}{((1+\varepsilon)^\frac{1}{s-1}-1)^{s-1}}|x-y|^{s\alpha}+\varepsilon f(y)^s.
\]
Interchanging \(x\) and \(y\) and then taking a maximum, we obtain the desired result.
\end{proof}

Finally, we introduce some helpful notation. Given \(\xi\in S^{n-1}\), the \(k^\mathrm{th}\)-order derivative in the direction of \(\xi\) is defined by
\[
    \partial^k_\xi f=(\xi\cdot\nabla)^kf=\sum_{|\beta|=k}\frac{k!}{\beta!}\xi^\beta\partial^\beta f.
\]
Also, given \(f\in C^{k,\alpha}(\mathbb{R}^n)\) we sometimes wish to discuss all \(k^\mathrm{th}\) order derivatives simultaneously; to do this we let \(|\nabla^kf|\) denote the vector of \(k^{th}\)-order derivatives of \(f\), so that \(|\nabla^kf|\in C^\alpha(\mathbb{R}^n)\).

More derivative estimates and applications of multi-index calculus arise throughout this work in specialized settings, but those listed above suffice for most of our arguments.

\subsection*{Infinite Graphs}

Though pure graph theory may seem out of place in a work on sums of squares, our scheme for bounding \(m(n,k,\alpha)\) relies in part on estimating the chromatic number \(\chi(G)\) of an infinite graph \(G\) which we construct in the next section. For easy reference, we first quote a relevant result in this area due to Behzad \& Radjavi \cite{behzad}. We remind the reader that the degree \(\mathrm{deg}\;v\) of a vertex \(v\in V(G)\) is the number of incident edges with that vertex.

\begin{thm}[Behzad \& Radjavi, 1976]\label{basic chromatic}
Let \(G\) be a connected infinite graph with \(\mathrm{deg}\;v\leq b\) for each \(v\in V(G)\) and some \(b\in\mathbb{N}\). Then \(\chi(G)\leq b\).
\end{thm}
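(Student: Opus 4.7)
The plan is to combine two classical results: the de Bruijn--Erd\H{o}s compactness theorem, which gives $\chi(G) = \sup\{\chi(H) : H \subseteq G \text{ finite}\}$, and Brooks' theorem, which states that $\chi(H) \leq \Delta(H)$ for any connected finite graph $H$ that is neither a complete graph nor an odd cycle. The overall strategy is to reduce to finite subgraphs via de Bruijn--Erd\H{o}s, apply Brooks' theorem componentwise, and exploit the connectedness and infinitude of $G$ to exclude the Brooks' exceptions.

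First I would fix an arbitrary finite subgraph $H \subseteq G$ and decompose it into its connected components $C_1, \dots, C_N$. Each $C_i$ inherits the degree bound $\Delta(C_i) \leq b$ from $G$, so Brooks' theorem gives $\chi(C_i) \leq b$ unless $C_i \cong K_{b+1}$ or, in the borderline case $b = 2$, $C_i$ is an odd cycle. Ruling out these two configurations is the heart of the argument.

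Suppose for contradiction that some component $C_i$ is isomorphic to $K_{b+1}$. Then each vertex $v \in V(C_i)$ has exactly $b$ neighbours inside $V(C_i)$, saturating its degree bound in $G$, so $v$ has no neighbours in $V(G)\setminus V(C_i)$. Consequently $V(C_i)$ is a union of connected components of $G$ itself, contradicting the fact that $G$ is connected and infinite while $V(C_i)$ is finite. The same saturation argument rules out odd-cycle components when $b=2$. Brooks' theorem therefore applies in its generic form to each $C_i$, giving $\chi(C_i) \leq b$, and hence $\chi(H) \leq b$. Invoking de Bruijn--Erd\H{o}s then yields $\chi(G) \leq b$.

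The main obstacle here is organizational rather than analytic: the proof rests on two non-trivial black boxes (de Bruijn--Erd\H{o}s and Brooks), and all of the genuine content is compressed into the short saturation argument that simultaneously uses connectedness, infinitude, and the degree bound on $G$. Since this theorem is quoted from \cite{behzad}, the two background results can reasonably be taken for granted.
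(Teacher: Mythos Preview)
Your proposal is correct and matches the approach the paper itself attributes to the original source: the paper does not prove this theorem in detail but quotes it from \cite{behzad}, remarking that ``the proof given in \cite{behzad} involves the famous theorems of Brooks and De Bruijn \& Erd\H{o}s, and it relies only on the degree bound and the fact that \(G\) is infinite.'' Your argument is exactly this---reduce to finite subgraphs via De Bruijn--Erd\H{o}s, apply Brooks componentwise, and use the saturation observation (a \(K_{b+1}\) or, when \(b=2\), an odd cycle would have every vertex of full degree \(b\), hence be disconnected from the rest of \(G\)) to eliminate the Brooks exceptions---so there is nothing to add.
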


The proof given in \cite{behzad} involves the famous theorems of Brooks and De Bruijn \& Erd\H{o}s, and it relies only on the degree bound and the fact that \(G\) is infinite. The graph which we later construct enjoys a great deal more structure not accounted for by these arguments, and so it is reasonable to suspect that a better bound for \(\chi(G)\) is possible. To improve over Brooks' Theorem, Welsh \& Powell \cite{Welsh} worked out the following result.

\begin{thm}[Welsh \& Powell, 1967]\label{thm:WP}
Let \(H\) be a finite graph whose vertices \(v_1,\dots,v_\ell\) are ordered by decreasing degree. Then \(\chi(H)\leq \displaystyle\max_{1\leq i\leq \ell}\min\{\mathrm{deg}\;v_i+1,i\}\).
\end{thm}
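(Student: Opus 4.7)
The plan is to use a straightforward greedy coloring argument, where the ordering by decreasing degree is what makes the bound tight. I would order the vertices as $v_1, v_2, \dots, v_\ell$ with $\deg v_1 \geq \deg v_2 \geq \cdots \geq \deg v_\ell$, and then color them in this order using the following rule: when it is $v_i$'s turn, assign to it the smallest positive integer that does not appear on any neighbor $v_j$ with $j < i$ that has already been colored.

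The key observation driving the bound is that at step $i$, the already-colored neighbors of $v_i$ form a subset of both $\{v_1,\dots,v_{i-1}\}$ and the neighborhood $N(v_i)$. Hence the number of forbidden colors at step $i$ is at most $\min\{i-1,\; \deg v_i\}$, and so the color assigned to $v_i$ is at most
\[
    \min\{i-1,\; \deg v_i\} + 1 = \min\{i,\; \deg v_i + 1\}.
\]
Taking the maximum of this quantity over $i = 1, \ldots, \ell$ yields an upper bound on the total number of colors used by the algorithm, which in turn bounds $\chi(H)$.

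There is no real obstacle here beyond checking that the two halves of the minimum are both valid bounds at each step; what gives the statement its strength is precisely the decreasing-degree ordering, which ensures that when $i$ is small the bound $i$ is active, and when $i$ is large (so that the graph has already been explored deeply) the bound $\deg v_i + 1$ takes over. In the context of this paper, Theorem \ref{thm:WP} will presumably be applied to finite subgraphs of the infinite graph $G$ referenced earlier (whose vertices carry widely varying degrees), so the usefulness of the bound comes from the ability to exploit both small $i$ and small $\deg v_i$ simultaneously — something Brooks' theorem cannot do.
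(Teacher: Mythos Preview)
Your proof is correct. The paper itself does not supply a standalone proof of Theorem~\ref{thm:WP}; it simply quotes the result and cites \cite{Welsh}. However, in the proof of Theorem~\ref{best chromatic} the paper describes the greedy procedure that it says follows \cite{Welsh} almost exactly, and that procedure differs from yours: rather than colouring vertices one at a time and assigning the least available colour, it builds colour classes \(T_1, T_2, \dots\) sequentially, each \(T_j\) being a maximal independent set selected greedily (by decreasing degree) from the vertices not already in \(T_1 \cup \cdots \cup T_{j-1}\). The paper then remarks that Welsh and Powell obtain their bound by estimating the least \(s\) with \(T_{s+1}=\emptyset\). Your vertex-by-vertex argument is the more direct route to the stated inequality; the colour-class version is what the paper actually needs downstream, because its proof of Theorem~\ref{best chromatic} hinges on analysing, for a vertex left uncoloured after \(s\) rounds, \emph{why} it failed to enter each earlier class \(T_j\) --- structural information that the vertex-by-vertex scheme does not expose as cleanly.
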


This implies Brooks' Theorem, and for small graphs it is often much better. However, it loses its advantage for graphs which have a large number of vertices of high degree. If these high degree vertices are spread out, this difficulty is largely artificial. For the remainder of this section then, we adapt the proof of Theorem \ref{thm:WP} to obtain a result suitable to our infinite graph.

For \(s\in\mathbb{N}\) we say that a finite graph \(H\) has structure \(\alpha_s\) if there exist distinct vertices \(v\) and \(w_1,\dots,w_s\) such that \(\{v,w_j\}\in E(H)\) and \(\mathrm{deg}\;w_j\geq\mathrm{deg}\;v\) for each \(j=1,\dots,s\), and if whenever \(\{w_i,w_j\}\not\in E(H)\) for \(j>i\), there exists \(z\in V(H)\setminus\{v,w_1,\dots,w_s\}\) such that \(\mathrm{deg}\;z\geq\mathrm{deg}\;w_j \), \(\{w_i,z\}\not\in E(G)\), and \(\{w_j,z\}\in E(G)\).

\begin{thm}\label{best chromatic}
If a finite graph \(H\) does not have the structure \(\alpha_s\) for \(s\in\mathbb{N}\), then \(\chi(H)\leq s\).
\end{thm}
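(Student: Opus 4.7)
The plan is to prove the contrapositive: if $\chi(H) > s$, then $H$ exhibits the structure $\alpha_s$. The strategy is to run greedy coloring on a decreasing-degree ordering of $V(H)$ and read off the required structural data directly from the resulting proper coloring, in the spirit of the Welsh--Powell argument behind Theorem \ref{thm:WP}.

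First I would order the vertices of $H$ as $v_1, \ldots, v_\ell$ by decreasing degree, breaking ties arbitrarily, and apply the greedy algorithm in which each $v_i$ is assigned the smallest positive integer color not already used on a neighbor among $v_1, \ldots, v_{i-1}$. Since the output is a proper coloring and $\chi(H) > s$, some vertex must receive a color exceeding $s$; let $v$ be the first such vertex in the ordering. Minimality of the greedy rule forces the color of $v$ to equal $s+1$ exactly, and forces each color $j \in \{1, \ldots, s\}$ to appear on some earlier-ordered neighbor of $v$. I would then pick $w_j$ to be one such neighbor with color $j$. The vertices $v, w_1, \ldots, w_s$ are pairwise distinct (distinct colors), each $w_j$ is adjacent to $v$ by construction, and $\deg w_j \geq \deg v$ because $w_j$ precedes $v$ in the decreasing-degree ordering.

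For any pair $i < j$ with $\{w_i, w_j\} \notin E(H)$, I would exploit the greedy history of $w_j$: since $w_j$ received color $j$, every color $1, \ldots, j-1$ must already have appeared on some earlier-ordered neighbor of $w_j$, so in particular some such neighbor $z$ carries color $i$. Then $\{w_j, z\} \in E(H)$ and $\deg z \geq \deg w_j$ are immediate; $\{w_i, z\} \notin E(H)$ because $w_i$ and $z$ share the color $i$ in a proper coloring; and $z \notin \{v, w_1, \ldots, w_s\}$ because a color comparison excludes $z = v$ and $z = w_k$ for $k \neq i$, while $z \neq w_i$ since $z$ is adjacent to $w_j$ but $w_i$ is not (this is the only place the non-edge hypothesis is used). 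Collecting these items shows that $v, w_1, \ldots, w_s$ witness the structure $\alpha_s$, completing the contrapositive. The proof is essentially bookkeeping; the principal refinement over Theorem \ref{thm:WP} is that the witness $z$ is extracted from the greedy coloring history of $w_j$ rather than from its degree alone, and the main thing to verify carefully is the exclusion $z \notin \{v, w_1, \ldots, w_s\}$, which is made painless by the basic observation that two vertices sharing a color in a proper coloring cannot be adjacent.
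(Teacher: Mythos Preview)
Your proof is correct and follows essentially the same approach as the paper. Both arguments run greedy coloring on a decreasing-degree ordering, locate a vertex $v$ forced into color class $s+1$, select high-degree neighbors $w_1,\dots,w_s$ from the earlier color classes, and then for each non-adjacent pair $w_i,w_j$ extract the witness $z$ from the greedy history of $w_j$; the paper phrases the greedy procedure as iteratively building independent sets $T_1,T_2,\dots$ rather than as sequential vertex coloring, but the two descriptions yield the same coloring and the same structural analysis, and your treatment of the exclusion $z\notin\{v,w_1,\dots,w_s\}$ is in fact more explicit than the paper's.
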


Ignoring part of the structure \(\alpha_s\), we recover a simple criterion for \(\chi(H)\) to be small.

\begin{cor}\label{better chromatic}
Let \(s\in\mathbb{N}\) and let \(H\) be a finite graph. If \(H\) does not have a vertex with \(s\) distinct neighbouring vertices of the same or larger degree, then \(\chi(H)\leq s\).
\end{cor}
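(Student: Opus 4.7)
The plan is to deduce the corollary immediately from Theorem \ref{best chromatic} by verifying that the hypothesis on $H$ rules out the structure $\alpha_s$. Recall that to say $H$ has structure $\alpha_s$ requires, as a first (necessary) condition, the existence of a vertex $v$ together with $s$ distinct neighbours $w_1,\dots,w_s$ satisfying $\deg w_j \geq \deg v$ for every $j$. The hypothesis of the corollary is precisely the negation of this: no vertex in $H$ admits $s$ distinct neighbours of the same or larger degree.

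Hence, whatever the rest of the definition of $\alpha_s$ demands regarding the auxiliary vertex $z$ and the non-edges among the $w_j$, the very first clause already fails for every candidate $v$. Thus $H$ does not have structure $\alpha_s$. Invoking Theorem \ref{best chromatic} gives $\chi(H) \leq s$, which is the conclusion. I expect the argument to amount to essentially one sentence once the definitions are unpacked, so there is no genuine obstacle — the corollary is stated precisely to package the most usable piece of structure $\alpha_s$ in a form that can be checked at a glance, and the proof is really just an observation about how that structure is defined.
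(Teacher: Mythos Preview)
Your argument is correct and matches the paper's approach: the corollary is stated immediately after Theorem~\ref{best chromatic} with the remark that it follows by ``ignoring part of the structure $\alpha_s$,'' which is exactly the observation you make. No separate proof is given in the paper, and none is needed beyond what you wrote.
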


The bounds we obtain for chromatic numbers of arbitrary finite induced sub-graphs of an infinite graph \(G\) also serve as bounds for \(\chi(G)\) thanks to the following well-known result.

\begin{thm}[De Bruijn \& Erd\H{o}s, 1951]\label{Erdos}
Let \(G\) be an infinite graph. Then \(\chi(G)=m\) if and only if \(\chi(H)\) is at most \(m\) for any finite subgraph of \(H\) of \(G\).
\end{thm}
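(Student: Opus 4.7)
The forward direction is immediate: any proper $m$-coloring of $G$ restricts to a proper $m$-coloring of any finite subgraph $H$, hence $\chi(H)\leq m$. The content of the theorem lies in the reverse implication, so I assume that every finite subgraph of $G$ admits a proper $m$-coloring. The plan is to use a compactness argument to assemble these local colorings into a global coloring of $G$; the relationship between $m$ and the supremum of $\chi(H)$ over finite subgraphs $H$ then yields the equality form of the statement.

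Let $C=\{1,2,\dots,m\}$ carry the discrete topology and form the product space $X=C^{V(G)}$ with the product topology. By Tychonoff's theorem $X$ is compact. An element of $X$ is simply a function $c:V(G)\to\{1,\dots,m\}$, that is, an assignment of colors to vertices. For each finite set $F\subseteq V(G)$, define
\[
    K_F=\{c\in X:c|_F\text{ is a proper coloring of the induced subgraph }G[F]\}.
\]
Each $K_F$ depends only on the finitely many coordinates indexed by $F$ and is cut out by finitely many inequalities $c(u)\neq c(v)$ for edges $\{u,v\}\in E(G[F])$, so $K_F$ is closed in $X$. By hypothesis $G[F]$ admits a proper $m$-coloring, which may be extended arbitrarily to $V(G)\setminus F$ to produce an element of $K_F$; thus each $K_F$ is nonempty.

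The family $\{K_F\}$ has the finite intersection property, since $K_{F_1}\cap\dots\cap K_{F_\ell}\supseteq K_{F_1\cup\dots\cup F_\ell}\neq\emptyset$. Compactness of $X$ therefore gives $\bigcap_F K_F\neq\emptyset$, and any element of this intersection is a proper $m$-coloring of $G$, so $\chi(G)\leq m$. Combined with the trivial bound $\chi(G)\geq\chi(H)$ for every finite subgraph $H$, this yields the equivalence in the form stated.

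The main (and really only) obstacle is set-theoretic rather than combinatorial: the proof invokes Tychonoff's theorem for an arbitrary product of finite discrete spaces, which is equivalent to the axiom of choice. The combinatorial content reduces to the routine observations that each $K_F$ is closed and nonempty and that the family is downward directed under union. Alternative proofs replacing the topological step with Zorn's lemma (well-order $V(G)$ and extend a partial coloring one vertex at a time using König-style bookkeeping) or with an ultrafilter construction would work equally well; the compactness formulation above seems to be the cleanest.
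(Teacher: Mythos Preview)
Your proof is correct and is the standard compactness argument for the De Bruijn--Erd\H{o}s theorem. Note, however, that the paper does not supply its own proof of this result: it is simply quoted as a well-known theorem and used as a black box, so there is no ``paper's proof'' to compare against.

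One small technical correction to your closing remark: Tychonoff's theorem restricted to products of finite discrete spaces is equivalent to the Boolean Prime Ideal theorem, which is strictly weaker than the full axiom of choice. This does not affect the validity of your argument, only the set-theoretic commentary.
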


The first part of our proof of Theorem \ref{best chromatic} involving greedy colouring follows \cite{Welsh} almost exactly. It is in the analysis of the output of this colouring algorithm that our approach differs.

\begin{proof}[Proof of Theorem \ref{best chromatic}]
Following \cite{Welsh}, we first perform a greedy colouring of \(H\). Let \(H\) be a finite graph, and list its vertices \(v_1,\dots, v_\ell\) in order of decreasing degree. Define \(T_1\subseteq V(H)\) by fixing \(v_1\in T_1\), and if \(v_{i_1},\dots, v_{i_t}\in T_1\) where \(1=i_1<\cdots<i_t\), assign \(v_j\) to \(T_1\) if and only if \(j\) is the smallest integer exceeding \(i_t\) for which \(v_j\) is not adjacent to any of \(\{v_{i_1},\dots,v_{i_t}\}\). Since \(H\) is finite, we can proceed until \(T_1\) is maximal.

Now form \(T_2\) by finding the largest \(j\) such that \(v_j\not\in T_1\) and let \(v_j\in T_2\). Proceed as above for vertices in \(V(H)\setminus T_1\), adding a vertex to \(T_2\) if it has maximal degree among vertices not already assigned to \(T_2\). Proceeding in this way, we see that \(T_{|V(H)|+1}=\emptyset\), meaning that this algorithm terminates in finitely many steps.

Here Welsh \& Powell estimate the smallest number \(s\in\mathbb{N}\) such that \(T_{s+1}=\emptyset\). We now do the same using different techniques. Assume that for \(s\in\mathbb{N}\) there exists \(v\in V(H)\) such that \(v\not\in T_1\cup\cdots\cup T_s\). Then \(v\) is adjacent to an element in each of the sets \(T_1,\dots,T_s\), for otherwise it would belong to one of them, and \(v\) is adjacent to a vertex of degree at least \(\mathrm{deg}\;v\) in each of the sets \(T_1,\dots, T_s\). To see this, assume toward a contradiction that for some \(j\leq s\), \(v\) is only adjacent to vertices of smaller degree than itself in \(T_j\). Then \(v\) would be assigned to \(T_j\) in the greedy colouring before those other vertices, contradicting the fact that \(v\not\in T_j\). It follows that \(v\) has distinct neighbours \(w_1,\dots,w_s\) each with degree \(\mathrm{deg}\;w_j\geq \mathrm{deg}\;v\).

Given any two neighbours \(w_i\in T_i\) and \(w_j\in T_j\) of \(v\) for \(j>i\), either \(\{w_i,w_j\}\in E(H)\), or there exists a neighbour \(z\in T_i\) of \(w_j\) with \(\mathrm{deg}\;z\geq\mathrm{deg}\;w_j\). Otherwise, \(w_j\) would have been assigned \(T_i\), contrary to our assumption. Thus \(H\) has the structure \(\alpha_s\). It follows in the contrapositive that if \(H\) omits this structure, then \(T_{s+1}=\emptyset\) and \(\chi(H)\leq s\).
\end{proof}

For the graph \(G\) which we later construct, Theorem \ref{basic chromatic} gives \(\chi(G)\leq 12\), while the refinement Corollary \ref{better chromatic} gives \(\chi(G)\leq 9\). Further improvements seem plausible, given that the argument which we later employ does not use Theorem \ref{best chromatic} in full force.

\subsection*{An Algebraic Result}

In the next section we will need to sum inequalities in such a way that all but a few terms cancel. The following lemma which enables this was proved independently with this purpose in mind, and we learned later that the same technique is used for a similar purpose in \cite[\S4]{BonyFr}.

\begin{lem}\label{thm:specialodds}
Let \(\ell\) be an odd integer and set \(s=\frac{\ell+1}{2}\). There exist numbers \(\eta_1,\dots,\eta_s\in\mathbb{R}\) and \(q_1,\dots,q_s\geq 0\) such that for each odd \(j\leq \ell\),
\[
    \sum_{i=1}^sq_i\eta_i^j=\begin{cases}
    0 & j<\ell,\\
    1 & j=\ell.
    \end{cases}
\]
\end{lem}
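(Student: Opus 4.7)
The key observation is that for odd $j$ we may factor $\eta_i^j = \eta_i \cdot (\eta_i^2)^{(j-1)/2}$. Motivated by this, I would substitute $\zeta_i = \eta_i^2$ and $p_i = q_i \eta_i$, so that the desired identity becomes
\[
    \sum_{i=1}^s p_i \zeta_i^{(j-1)/2} = \begin{cases} 0 & j < \ell, \\ 1 & j = \ell, \end{cases}
\]
for odd $j \in \{1, 3, \dots, \ell\}$. Letting $k = (j-1)/2$ range over $\{0, 1, \dots, s-1\}$, this is a square linear system in the unknowns $p_1, \dots, p_s$ with coefficient matrix $V_{ki} = \zeta_i^k$.

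The plan is to first pick any $s$ distinct positive real numbers $\zeta_1, \dots, \zeta_s$; then $V$ is a (transposed) Vandermonde matrix, hence invertible, so the system admits a unique real solution $p = (p_1, \dots, p_s)$. To recover $\eta_i$ and $q_i$ from $\zeta_i$ and $p_i$, I would set
\[
    \eta_i = \operatorname{sgn}(p_i)\sqrt{\zeta_i}, \qquad q_i = \frac{|p_i|}{\sqrt{\zeta_i}},
\]
with the convention $\operatorname{sgn}(0)=+1$. Since $\zeta_i>0$, there is no division by zero, and by construction $q_i \geq 0$ and $q_i\eta_i = p_i$. A direct verification then gives
\[
    \sum_{i=1}^s q_i \eta_i^j = \sum_{i=1}^s (q_i\eta_i)\,\zeta_i^{(j-1)/2} = \sum_{i=1}^s p_i \zeta_i^{(j-1)/2},
\]
which by the choice of $p$ equals $0$ for odd $j<\ell$ and equals $1$ for $j=\ell$.

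The only genuinely nontrivial point is achieving $q_i \geq 0$, and this is handled precisely by the sign freedom in choosing the square root $\eta_i = \pm\sqrt{\zeta_i}$, which is available because the problem constrains only odd powers of $\eta_i$. The rest is pure Vandermonde linear algebra, so I do not anticipate any substantive obstacle.
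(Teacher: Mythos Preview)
Your argument is correct. Both your proof and the paper's rest on the same two ingredients: a Vandermonde-type linear system and the freedom to choose the sign of each $\eta_i$ (since only odd powers appear). The difference is in how the linear algebra is packaged. The paper works directly with the matrix $(M)_{ij}=\eta_j^{2i-1}$, computes $\det(M)$ and its minors by hand, applies Cramer's rule to obtain the explicit formula
\[
q_k=\bigg(\eta_k\prod_{i\neq k}(\eta_k^2-\eta_i^2)\bigg)^{-1},
\]
and then sets $\eta_k=(-1)^{s+k}k$ to force each $q_k>0$. Your substitution $\zeta_i=\eta_i^2$, $p_i=q_i\eta_i$ collapses this to a \emph{standard} Vandermonde system, so invertibility is immediate and no explicit determinant or minor computation is needed; the non-negativity of the $q_i$ then falls out of the sign choice $\eta_i=\operatorname{sgn}(p_i)\sqrt{\zeta_i}$ rather than from inspecting a closed-form expression. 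Your route is shorter and more conceptual; the paper's is more explicit, yielding concrete values of $\eta_i$ and $q_i$ should one ever want them.
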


\begin{proof}
It suffices to show that \(Mq=e_s\) has a non-negative solution \(q=(q_1,\dots,q_s)^t\in\mathbb{R}^s\), where \(e_s\) is the last standard basis vector in \(\mathbb{R}^s\) and the square matrix \(M\) is given entry-wise by \((M)_{ij}=\eta_j^{2i-1}\). Our aim is to select \(\eta_1,\dots,\eta_s\) so that \(M\) is invertible and the entries of \(M^{-1}e_s\) are non-negative. If \(M\) is nonsingular then then \(q_k\) is the \(k^\mathrm{th}\) entry in column \(s\) of \(M^{-1}\) given by
\[
    q_k=\frac{(-1)^{s+k}\mathrm{det}(M_{sk})}{\mathrm{det}(M)},
\]
where \(M_{sk}\) is the matrix minor of \(M\) obtained by deleting row \(s\) and column \(k\) of \(M\).

Now we establish a formula for \(\mathrm{det}(M)\). This is a homogeneous polynomial in the variables \(\eta_1,\dots,\eta_s\) of degree \(s^2\), and if \(\eta_i=0\) for any \(i\) then \(M\) is singular, meaning that \(\eta_i\mid \mathrm{det}(M)\). Similarly, if \(i\neq j\) and \(\eta_i=\pm\eta_j\) then \(M\) has two linearly dependent columns and \(\mathrm{det}(M)=0\), so \((\eta_i^2-\eta_j^2)\mid \mathrm{det}(M)\) whenever \(i\neq j\). It follows that \(P\mid\mathrm{det}(M)\), where
\[
    P=\bigg(\prod_{i=1}^s\eta_i\bigg)\bigg(\prod_{i=1}^s\prod_{j=1}^{i-1}(\eta_i^2-\eta_j^2)\bigg).
\]
Since \(\mathrm{deg}\;P=s^2\), it follows from the Fundamental Theorem of Algebra that \(\mathrm{det}(M)/P\) has degree zero and is constant. Moreover, it is easy to verify that \(\mathrm{det}(M)/P=1\) and so \(\mathrm{det}(M)=P\). We can compute \(\mathrm{det}(M_{sk})\) in similar faskion, since this minor assumes the same form as \(M\). Omitting appropriate terms from the determinant formula for \(M\), we get
\[
    \mathrm{det}(M_{sk})=\bigg(\prod_{\substack{1\leq i\leq s\\
    i\neq k}}\eta_i\bigg)\bigg(\prod_{\substack{1\leq j< i\leq s\\i,j\neq k}}(\eta_i^2-\eta_j^2)\bigg).
\]
Equipped with this identity and the formula for \(q_k\) above, we are now able to write
\[
    q_k=(-1)^{s+k}\bigg(\eta_k\prod_{j=1}^{k-1}(\eta_k^2-\eta_j^2)\prod_{i=k+1}^s(\eta_i^2-\eta_k^2)\bigg)^{-1}=\bigg(\eta_k\prod_{\substack{1\leq i\leq s\\i\neq k}}(\eta_k^2-\eta_i^2)\bigg)^{-1}.
\]
Taking \(\eta_k=(-1)^{s+k}k\) ensures that each \(q_k\) is positive, and we are finished. 
\end{proof}

\section{Partition Of Unity}

Now we construct the partition of unity from which we extract the desired SOS decomposition. The key property of our construction is that the functions comprising the partition are not too closely clustered together, which allows us to form a SOS decomposition with relatively few functions. The construction is in many ways standard, taking inspiration from classical results like \cite[Lem. 1.2]{guzman}. It is modelled loosely on \cite[Lem. 5.4]{Tataru}, and it relies on a property of functions with controlled oscillation.

\begin{defn}
A function \(r:\mathbb{R}^n\rightarrow\mathbb{R}\) is said to be slowly-varying if \(r\geq 0\) everywhere, and given \(0<c<1\) there exists a constant \(\nu>0\) such that \(|r(x)-r(y)|\leq cr(x)\) whenever \(|x-y|\leq\nu r(x)\).
\end{defn}

The most obvious slowly-varying functions are the uniformly Lipschitz functions on \(\mathbb{R}^n\). A more interesting example is given in Lemma \ref{lem:slowvar}. For such functions we have the following result.

\begin{thm}\label{thm:party}
If \(r\) is slowly-varying and \(1<\lambda<\frac{3}{2}\), then there exists a family of dyadic cubes \(\{Q_j\}\) and non-negative smooth functions \(\{\psi_j\}\) such that each \(\psi_j\) is supported in \(\lambda Q_j\) and
\begin{equation}\label{eq:pou2}
    \sum_{j=1}^\infty\psi_j^2=
    \chi_{\{r>0\}}.
\end{equation}
This partition of unity has the following properties:
\begin{itemize}
    \item[(1)] No more than \(2^n\) functions in \(\{\psi_j\}\) are non-zero at any given point,
    \item[(2)] For every multi-index \(\beta\), each \(\psi_j\) satisfies the estimate \(|\partial^\beta\psi_j(x)|\leq C r(x)^{-|\beta|}\),
    \item[(3)] For every multi-index \(\beta\) and \(0<\alpha\leq 1\), each \(\psi_j\) satisfies \([\partial^\beta\psi_j]_{\alpha}(x)\leq Cr(x)^{-\alpha-|\beta|}\),
    \item[(4)] One can partition \(\{\psi_j\}\) into \(4^n-2^n\) collections of functions with pairwise-disjoint supports.
    \item[(5)] If \(n=2\), this partition of \(\{\psi_j\}\) can be achieved using only \(9\) pairwise disjoint collections.
\end{itemize}
\end{thm}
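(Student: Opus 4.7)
My plan is to construct the partition via a dyadic Whitney-type scheme adapted to the level function $r$. Fix a small constant $c \in (0,1)$ and let $\nu = \nu(c) > 0$ be the associated constant from the slowly-varying definition. Let $\{Q_j\}$ be the collection of dyadic cubes $Q$ satisfying $\ell(Q) \leq \nu r(x)$ for every $x \in \lambda Q$ but whose dyadic parent fails this condition; the usual Whitney argument shows these cubes are essentially disjoint and their interiors cover $\{r > 0\}$, while the slowly-varying hypothesis forces $\ell(Q_j) \sim r(x)$ throughout $\lambda Q_j$. Fix once and for all a smooth nonnegative bump $\phi \in C_c^\infty(\mathbb{R}^n)$ supported in $\lambda(-\tfrac12,\tfrac12)^n$ and strictly positive on $[-\tfrac12,\tfrac12]^n$, then define $\phi_j(x) = \phi(\ell(Q_j)^{-1}(x - x_j))$ where $x_j$ is the centre of $Q_j$. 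The function $\Phi := \sum_k \phi_k^2$ is smooth, bounded above by the finite overlap count, and bounded below by a positive constant on $\{r > 0\}$; setting $\psi_j := \phi_j / \sqrt{\Phi}$ (and $\psi_j \equiv 0$ off $\{r > 0\}$) produces a smooth family with $\mathrm{supp}\,\psi_j \subseteq \lambda Q_j$ and verifies \eqref{eq:pou2}.

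The geometric restriction $1 < \lambda < 3/2$ is precisely what drives (1): two dyadic cubes of comparable side lengths whose $\lambda$-enlargements both contain a point $x$ must share a face, edge or corner through $x$, and a coordinate-by-coordinate count shows at most $2^n$ such cubes meet any given point. For (2), differentiating $\phi_j$ in the chain rule yields $|\partial^\beta \phi_j(x)| \leq C \ell(Q_j)^{-|\beta|}$, and substituting $\ell(Q_j) \sim r(x)$ from the slowly-varying hypothesis converts this to $Cr(x)^{-|\beta|}$; applying the Leibniz rule to $\psi_j = \phi_j/\sqrt{\Phi}$ together with the uniform lower bound on $\Phi$ transfers the estimate from $\phi_j$ to $\psi_j$. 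Property (3) follows by the same reasoning applied to one additional derivative, where a mean value argument on the scale $r(x)$ (or an application of Lemma \ref{lem:Taylorest}) contributes the extra factor $r(x)^{-\alpha}$.

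The principal obstacle lies in (4) and (5). Form the overlap graph $G$ whose vertices are the indices $j$ and whose edges join $j \sim k$ whenever $\lambda Q_j \cap \lambda Q_k \neq \emptyset$. The dyadic structure together with the fact that adjacent selected cubes differ in side by at most a factor of two yields a uniform degree bound $\deg v \leq 4^n - 2^n$, so Theorem \ref{basic chromatic} gives $\chi(G) \leq 4^n - 2^n$ and (4) follows by grouping indices by colour class. For (5) I would specialise to $n = 2$, where the degree bound collapses to $12$, and argue that Corollary \ref{better chromatic} applies with $s = 9$: any vertex $v$ whose enlarged cube $\lambda Q_v$ is flanked by nine distinct enlargements of equal or larger size must be surrounded by cubes with pathologically restricted sizes and positions in the plane, a configuration the local geometry rules out. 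The main difficulty I expect is the bookkeeping in this last step, where a careful case analysis of the possible arrangements of neighbouring cubes around a single cube in $\mathbb{R}^2$ is needed to confirm the hypothesis of Corollary \ref{better chromatic} and so obtain the sharp constant $9$.
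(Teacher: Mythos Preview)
Your plan is essentially the paper's own proof: a Whitney-type dyadic selection adapted to $r$, normalized bumps $\psi_j=\phi_j/\sqrt{\Phi}$, the degree bound $4^n-2^n$ on the overlap graph together with Theorem~\ref{basic chromatic} for (4), and Corollary~\ref{better chromatic} for (5).

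Two points to tighten. First, your sketch of (1) is slightly off: cubes whose $\lambda$-dilates contain $x$ need not touch \emph{through} $x$. The paper's mechanism is that such cubes are pairwise adjacent (closures meet somewhere), and then a coordinate-wise Helly argument shows all their closures share a common point; since a point lies in at most $2^n$ closed dyadic cubes, the bound follows. Second, and more important for (5): Corollary~\ref{better chromatic} asks for $s$ neighbours of equal or larger \emph{degree}, not equal or larger \emph{size}. The paper's case analysis exploits precisely this distinction---it shows that if $Q$ has nine or more neighbours, enough of them must have side $\tfrac{1}{2}\ell(Q)$, and such half-size cubes are adjacent to at most $8$ others, so their degree is at most $8<9$. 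That is what rules out the $\alpha_9$ structure. If you chase ``size'' instead of ``degree'' you will not close the argument.
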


\noindent\textit{Remark}: \textit{A priori} the case \(n=2\) is not fundamentally different from higher dimensions. Rather, the argument that we use to prove property \textit{(5)} is not easy to implement when \(n\geq 3\).

\begin{proof}
First we construct an appropriate family of cubes. Let \(r\) be slowly varying with \(\nu>0\) chosen so that \(c\leq\frac{1}{5}\), and let \(\{Q_j\}\) be the collection of maximal dyadic cubes which satisfy
\[
    \ell(Q)\leq \frac{\nu}{2\sqrt{n}}\inf_{Q}r.
\]
We show that the quantities in the inequality above are in fact equivalent. Given a cube \(Q\), we denote its parent cube by \(\Tilde{Q}\). If \(y\in Q\) and \(z\in\Tilde{Q}\) then \(|y-z|\leq \sqrt{n}\ell(\Tilde{Q})=2\sqrt{n}\ell(Q)\leq \nu r(y)\), and from slow variation we can conclude that \((1-c)r(y)\leq r(z)\). Since \(y\) and \(z\) were arbitrary we find that \((1-c)\inf_Qr\leq \inf_{\tilde{Q}}r\), and using this fact with maximality of \(Q\) we get
\[
    \ell(Q)=\frac{1}{2}\ell(\tilde{Q})>\frac{\nu}{4\sqrt{n}}\inf_{\Tilde{Q}}r\geq\frac{(1-c)\nu}{4\sqrt{n}}\inf_{Q}r.
\]

Equipped with the inequalities above, we next show that if \(Q_i\) and \(Q_j\) are adjacent cubes (i.e. if \( \overline{Q_1}\cap\overline{Q_j}\neq\emptyset \)) then \(\ell(Q_i)\leq 2\ell(Q_j)\) and \(\ell(Q_j)\leq 2\ell(Q_i)\). For brevity we let \(r_i=\inf_{Q_i} r\), and we observe that if there exists \(x\in \overline{Q_i}\cap\overline{Q_j} \) then for any \(y\in Q_i\) we have \(|x-y|\leq \sqrt{n}\ell(Q_i)\). From our selection of \(Q_i\) it follows that \(|x-y|\leq\nu r_i\leq \nu r(y)\), and since \(r\) is slowly-varying we have \((1-c)r(y)\leq r(x)\leq (1+c)r(y)\). Since \(y\in Q_i\) was arbitrary, we have \((1-c)r_i\leq r(x)\leq (1+c)r_i\). The same inequalities hold if we replace \(r_i\) with \(r_j\), meaning that
\[
   \bigg(\frac{1-c}{1+c}\bigg)r_j\leq  r_i\leq \bigg(\frac{1+c}{1-c}\bigg)r_j.
\]
Finally, we combine the inequalities above to see that if the cubes \(Q_i\) and \(Q_j\) are adjacent then
\[
    \ell(Q_i)\leq \frac{\nu}{2\sqrt{n}}r_i\leq \frac{\nu(1+c)}{2\sqrt{n}(1-c)}r_j\leq\frac{2(1+c)}{(1-c)^2}\ell(Q_j)<4\ell(Q_j),
\]
where the last inequality follows from our initial selection of \(c\). Since \(Q_i\) and \(Q_j\) are dyadic, this implies that \(\ell(Q_i)\leq 2\ell(Q_j)\), and an identical argument shows that \(\ell(Q_j)\leq 2\ell(Q_i)\).

Now we fix \(1<\lambda<\frac{3}{2}\), and we study the dilated family \(\{\lambda Q_j\}\) with the aim of showing that no point in \(\mathbb{R}^n\) belongs to more than \(2^n\) dilated cubes. First, we establish that if \(\lambda Q_i\cap\lambda Q_j\neq \emptyset\) then \(Q_i\) and \(Q_j\) are adjacent. To do this we argue by contraposition, assuming that \(Q_i\) and \(Q_j\) are not adjacent. Writing each cube as a Cartesian product of intervals in the form
\[
    Q_i=\prod_{k=1}^nI_{ik},
\]
we observe that if \(d_k=\mathrm{dist}(I_{ik},I_{jk})\) then \(\mathrm{dist}(Q_i,Q_j)\geq \min\{d_1,\dots,d_n\}\). Since \(Q_i\) and \(Q_j\) are non-adjacent dyadic cubes, it follows from the inequalities for adjacent cubes established above that for each \(k=1,\dots,n\), we must have \(d_k\geq \frac{1}{2}\max\{\ell(Q_i),\ell(Q_j)\}\). This is because the distance in any direction is at least the length of some cube that is adjacent to \(Q_i\) or \(Q_j\).

Clearly \(\lambda Q_i\) is the Cartesian product of the dilated intervals, and for each \(k\) we can observe that \(\mathrm{dist}(\lambda I_{ik},\lambda I_{jk})=\mathrm{dist}(I_{ik},I_{jk})-(\frac{\lambda-1}{2})(\ell(Q_i)+\ell(Q_j))\) since \(|I_{ik}|=\ell(Q_i)\) for each \(k=1,\dots,n\). Consequently we have the estimate
\[
    \mathrm{dist}(\lambda I_{ik},\lambda I_{jk})\geq \frac{1}{2}\max\{\ell(Q_i),\ell(Q_j)\}-\bigg(\frac{\lambda-1}{2}\bigg)(\ell(Q_i)+\ell(Q_j))\geq \bigg(\frac{3}{2}-\lambda\bigg)\max\{\ell(Q_i),\ell(Q_j)\}.
\]
Since \(\lambda<\frac{3}{2}\) by assumption, we find that \(\mathrm{dist}(\lambda I_{ik},\lambda I_{jk})>0\) for each \(k\). It follows that \(\mathrm{dist}(\lambda Q_i,\lambda Q_j)>0\) meaning that \(\lambda Q_i\) and \(\lambda Q_j\) do not intersect. Hence, if \(\lambda Q_i\) and \(\lambda Q_j\) intersect, then \(Q_i\) and \(Q_j\) are adjacent as claimed.

Now fix \(x\in\mathbb{R}^n\), and relabelling if necessary let \(Q_1,\dots,Q_m\) be the cubes whose dilates by \(\lambda\) contain \(x\). If \(Q_i\) and \(Q_j\) are any two of these cubes, then \(x\in \lambda Q_i\cap\lambda Q_j\) and it follows from the argument above \(Q_i\) and \(Q_j\) are adjacent, so the cubes \(Q_1,\dots,Q_m\) are pairwise adjacent. We use this fact to obtain an upper bound on \(m\), by showing that \(\overline{Q_1}\cap\cdots\cap \overline{Q_m}\neq\emptyset\). To this end we once again write each cube as a Cartesian product of closed intervals, and use the fact that Cartesian products and intersections commute:
\[
    \bigcap_{i=1}^m\overline{Q_i}=\bigcap_{i=1}^m\prod_{k=1}^nI_{ik}=\prod_{k=1}^n\bigcap_{i=1}^mI_{ik}.
\]
The second identity is easily verified by induction. For each fixed \(k\), now consider the intersection
\[
    \bigcap_{i=1}^mI_{ik}.
\]
Since the family of cubes is pairwise adjacent, any two intervals \(I_{ik}\) and \(I_{jk}\) must intersect, for otherwise \(\mathrm{dist}(Q_i,Q_j)>0\). Writing \(I_{ik}=[a_{ik},b_{ik}]\), we take \(a_k=\max_i\{a_{ik}\}\) and \(b_k=\min_i\{b_{ik}\}\) and we note that \(a_k\leq b_k\) for if not, then for some \(i\) and \(j\) we have \(a_{ik}>b_{jk}\) and \(I_{ik}\cap I_{jk}=\emptyset\). Since \([a_k,b_k]\subseteq I_{ik}\) for each \(i=1\dots,m\) we have that
\[
    \bigcap_{i=1}^mI_{ik}\supseteq[a_k,b_k]\neq\emptyset,
\]
since the interval contains at least the point \(a_k\). It follows that \(\overline{Q_1}\cap\cdots\cap \overline{Q_m}\) is a Cartesian product of non-empty sets, hence it is nonempty. Since a given point in \(\mathbb{R}^n\) can belong to the boundary of at most \(2^n\) closed cubes, we conclude that \(m\leq 2^n\). Finally, since \(x\in\mathbb{R}^n\) was arbitrary, we conclude that any point can belong to at most \(2^n\) dilated cubes.

There is an additional property of this family \(\{Q_j\}\) that will be useful momentarily. Namely, we are able to write
\begin{equation}\label{eq:setseq}
    S=\{x\in\mathbb{R}^n:\;r(x)>0\}=\bigcup_{j=1}^\infty\lambda Q_j.   
\end{equation}
Clearly \(S\) is contained in the union of dilated cubes, and to verify the reverse inclusion we fix \(x\in\lambda Q_j\) for any \(j\). Since \(\lambda<\frac{3}{2}\), it follows that for any \(y\in Q_j\) we have that \(r(y)>0\) and
\[
    |x-y|\leq \frac{(1+\lambda)\sqrt{n}}{2}\ell(Q_j)\leq \frac{(1+\lambda)\nu}{4}\inf_{Q_j}r\leq \nu r(y).
\]
From slow variation of \(r\) this implies that \(0<(1-c)r(y)\leq r(x)\), allowing us to conclude that \(x\in S\). Since \(x\) was any member of the union, we see that \eqref{eq:setseq} holds.

Equipped with the family of dyadic cubes constructed above, we now define the functions \(\{\psi_j\}\) by using a construction resembling that in \cite{Tataru}. First let \(\phi:\mathbb{R}\rightarrow\mathbb{R}\) be a smooth non-negative function supported in \([-\frac{\lambda}{2},\frac{\lambda}{2}]\), for which \(\phi=1\) on \([-\frac{1}{2},\frac{1}{2}]\). Fixing a cube \(Q\), we denote its center point by \(\tilde{x}\) and we set
\[
    \psi_Q(x)=\prod_{k=1}^n\phi\bigg(\frac{\tilde{x}_k-x_k}{\ell(Q)}\bigg).
\]
In this way, we see that \(\psi_Q=1\) on \(Q\), and that \(\psi_Q\) is supported in \(\lambda Q\). Moreover, for any multi-index \(\beta=(\beta_1,\dots,\beta_n)\) we observe that thanks to smoothness of \(\phi\), the following estimate holds for some constant \(C\) which does not depend on \(Q\),
\[
    |\partial^\beta\psi_Q(x)|=\prod_{k=1}^n\bigg|\frac{\partial^{\beta_k}}{\partial x_k^{\beta_k}}\phi\bigg(\frac{\tilde{x}_k-x_k}{\ell(Q)}\bigg)\bigg|\leq C\prod_{k=1}^n\ell(Q)^{-\beta_k}=C\ell(Q)^{-|\beta|}.
\]
This estimate will be useful momentarily. Finally, we can define the family of functions \(\{\psi_j\}\) by taking the dyadic collection \(\{Q_j\}\) constructed above and for each \(j\) setting
\[
    \psi_j=\psi_{Q_j}\bigg(\sum_{i=1}^\infty\psi_{Q_i}^2\bigg)^{-\frac{1}{2}}.
\]
There is no issue of convergence inside \(S\) since at least one and at most \(2^n\) terms in the sum above are nonzero at any given point. It is also clear by the definition that if \(x\in S\) then
\[
    \sum_{j=1}^\infty\psi_j^2=1.
\]
Since each \(\psi_{Q_j}\) is supported in \(S\) (as \(\lambda Q_j\subset S\) for each \(j\), as we showed above) so too is \(\psi_j\). Hence we can smoothly extend each \(\psi_j\) by zero outside of \(S\) to get equation \eqref{eq:pou2}. Additionally, given \(x\in\mathbb{R}^n\), we recall that \(x\) is in at most \(2^n\) of the dilates \(\{\lambda Q_j\}\), meaning that at most \(2^n\) of the functions \(\{\psi_j\}\) can be non-zero at any given point and property \textit{(1)} holds.

Now we establish estimates \textit{(2)} and \textit{(3)}. For the former, we fix \(j\in\mathbb{N}\) and conduct a straightforward (albeit tedious) calculation using the generalized product and chain rules discussed in Section 2 to see that for any multi-index \(\beta\),
\[
    \partial^\beta\psi_j=\sum_{\gamma\leq\beta}\sum_{\Gamma\in P(\gamma)}C_{\gamma,\Gamma}(\partial^{\beta-\gamma}\psi_{Q_j})\bigg(\sum_{i=1}^\infty\psi_{Q_i}^2\bigg)^{-\frac{1}{2}-|\Gamma|}\prod_{\eta \in\Gamma}\sum_{i=1}^\infty\sum_{\delta\leq\eta}\binom{\eta}{\delta}(\partial^{\eta-\delta}\psi_{Q_i})(\partial^\delta\psi_{Q_i}).
\]
If \(x\not\in \overline{\lambda Q_j}\) then \(\partial^\beta\psi_j(x)=0\), and \textit{(2)} holds trivially. On the other hand, if \(x\in \overline{\lambda Q_j}\) then at most \(2^n\) of the functions \(\psi_{Q_i}\) and their derivatives are not identically zero in a neighbourhood of \(x\). Re-indexing if necessary, we call these functions \(\psi_{Q_1},\dots,\psi_{Q_{2^n}}\) and we note that \(\ell(Q_j)\leq 2\ell(Q_i)\) for each \(i=1,\dots,2^n\). Recalling the estimate \(|\partial^\beta\psi_{Q_i}|\leq C\ell(Q_i)^{-|\beta|}\), we find that
\[
    \prod_{\eta\in\Gamma}\bigg|\sum_{i=1}^\infty\sum_{\delta\leq\eta}\binom{\eta}{\delta}(\partial^{\eta-\delta}\psi_{Q_i})(\partial^\delta\psi_{Q_i})\bigg|\leq C\prod_{\eta\in\Gamma}\sum_{i=1}^{2^n}\sum_{\delta\leq\eta}\ell(Q_i)^{-|\eta|}\leq C\prod_{\eta\in\Gamma}\ell(Q_j)^{-|\eta|}=C\ell(Q_j)^{-|\gamma|}.
\]
Since \(\psi_{Q_i}(x)=1\) for some \(i\in\mathbb{N}\) we also have
\[
    \bigg(\sum_{i=1}^\infty\psi_{Q_i}^2\bigg)^{-\frac{1}{2}-|\Gamma|}\leq 1.    
\]
Employing these pointwise estimates in the formula for \(\partial^\beta\psi_j\) stated above, we obtain the bound
\[
    |\partial^\beta\psi_j|\leq C\sum_{\gamma\leq\beta}\sum_{\Gamma\in P(\gamma)}|\partial^{\beta-\gamma}\psi_{Q_j}|\ell(Q_j)^{-|\gamma|}\leq C\sum_{\gamma\leq\beta}\sum_{\Gamma\in P(\gamma)}\ell(Q_j)^{|\gamma|-|\beta|}\ell(Q_j)^{-|\gamma|}=C\ell(Q_j)^{-|\beta|}.
\]
Finally, we use the fact established above that \(\ell (Q_j)\) is comparable to \(r(x)\) on \(\lambda Q_j\) to conclude that  \(|\partial^\beta\psi_j|\leq Cr^{-|\beta|}\), giving property \textit{(2)}.

For property \textit{(3)} we note that the estimate is trivial when \(x\not\in\overline{\lambda Q_j}\), since \([\partial^\beta \psi_j]_\alpha(x)=0\). If \(x\in \overline{\lambda Q_j}\) then from the estimate \textit{(2)} and the mean value theorem, we have for \(y,z\in\overline{\lambda Q_j}\) that
\[
    |\partial^\beta\psi_j(y)-\partial^\beta\psi_j(z)|\leq C\ell(Q_j)^{-|\beta|-1}|y-z|\leq C\ell(Q_j)^{-|\beta|-\alpha}|y-z|^{\alpha}.
\]
From this estimate, it follows that \([\partial^\beta \psi_j]_\alpha(x)\leq C\ell(Q_j)^{-|\beta|-\alpha}\). Property \textit{(3)} then comes as a consequence of the equivalence of \(r\) with \(\ell(Q_j)\) on \(\lambda Q_j\).

To prove property \textit{(4)}, we form an infinite graph \(G\) as follows. Associate to each cube in \(\{Q_j\}\) a vertex \(v_j\), and add edge \(\{v_i,v_j\}\) if the corresponding cubes \(Q_i\) and \( Q_j\) are adjacent (i.e. if \(\overline{Q_i}\cap\overline{Q_j}\neq\emptyset\)). It follows from continuity of \(r\) that \(G\) is an infinite graph, and we can assume without loss of generality that \(G\) is connected by treating each connected component of the set \(S\) separately. We wish to bound \(\chi(G)\), the chromatic number of \(G\).

First we do this in any number of dimensions, before specializing to \(n=2\). Each cube in \(\{Q_j\}\) is adjacent to at most \(4^n-2^n\) other cubes, since adjacent cubes satisfy \(\ell(Q_i)\leq 2\ell(Q_j)\), and so \(d=\displaystyle\sup_{v\in V(G)}\mathrm{deg}\;v\leq 4^n-2^n.\) It follows from Theorem \ref{basic chromatic} that \(\chi(G)\leq 4^n-2^n\), giving \textit{(4)}.

Finally, to prove \textit{(5)} we fix \(n=2\) and we consider an arbitrary finite induced subgraph \(H\) of \(G\). Fix \(s=9\) and assume toward a contradiction that there exists \(v\in V(H)\) with \(s\) neighbours each with degree at least \(s\). In terms of the dyadic grid, this means that there exists a cube \(Q\) which is adjacent to at least \(9\) cubes \(Q_1,\dots,Q_9\), which are each adjacent to at least \(9\) cubes. At most four of these cubes intersect \(\overline{Q}\) only at a corner, and the remaining cubes which we relabel as \(Q_1,\dots, Q_5\) must share a face with \(Q\). It follows that \(\ell(Q_1)+\cdots+\ell(Q_{5})\leq 4\ell(Q),\) meaning that at least two neighbours (say \(Q_1\) and \(Q_2\)) have size \(\frac{1}{2}\ell(Q)\). This implies that \(Q_1\) and \(Q_2\) are adjacent to at most \(8\) other cubes: \(Q\) and three of its neighbours, and at most four adjacent cubes with length at least \(\frac{1}{4}\ell(Q)\) opposite \(Q\).

If \(\mathrm{deg}\;v\leq 10\) then we are finished, for this implies that at most \(8\) neighbours of \(Q\) may induce vertices of degree at least \(s=9\). If there are \(m=11\) or \(m=12\) neighbours then four must be at corners, and two others have length \(\frac{1}{2}\ell(Q)\) and occupy at least one face of \(Q\). For the remaining neighbours which we relabel \(Q_1,\dots,Q_{m-6}\) we have
\[
    \ell(Q_1)+\cdots+\ell(Q_{5})\leq \ell(Q_1)+\cdots+\ell(Q_{m-6})=3\ell(Q),
\]
meaning that at least four of the cubes above have length \(\frac{1}{2}\ell(Q)\). Each can have at most 8 neighbours, so at least 6 neighbours of \(Q\) have fewer than \(s\) neighbours, meaning that only \(m-6\leq 6\) neighbours of \(Q\) can induce vertices of degree at least \(s\). If follows that \(G\) contains no vertices with \(s=9\) neighbours each of degree exceeding \(\mathrm{deg}\;v\geq s\). By Corollary \ref{better chromatic} we conclude that \(\chi(H)\leq 9\). Since \(H\) was arbitrary, it follows that \(\chi(G)\leq 9\) by Theorem \ref{Erdos}.
\end{proof}

Property \textit{(5)} cannot be improved using the pigeonholing argument above, since a given cube \(Q\) in \(\mathbb{R}^2\) can have 8 neighbouring vertices of degree at least \(8\). In higher dimensions, the pigeonhole argument also becomes difficult since bounding vertex degrees is challenging.

\section{Local Decompositions}

This section shows that the localization of a non-negative function \(f\in C^{k,\alpha}(\mathbb{R}^n)\) to a suitable cube \(Q\) either has a square root in \eqref{halfreg}, or it is possible to write \(f=g^2+F\) on \(Q\), where \(g\) belongs to \eqref{halfreg} and \(F\in C^{k,\alpha}(\mathbb{R}^{n-1})\). These local decompositions can be performed in the support sets of the partition functions \(\psi_j\), and they always exist when \(k\leq 3\). If \(k\geq4\) then \(f\) can be decomposed locally when some additional hypotheses are satisfied.

Throughout this section we fix non-negative \(f\in C^{k,\alpha}(\mathbb{R}^n)\) for \(k\geq 2\) and \(0<\alpha\leq 1\). Our primary instrument is a function \(r\) that controls the derivatives of \(f\). It was originally identified by Fefferman \& Phong \cite{Fefferman-Phong} in the case \(k=3\) and \(\alpha=1\), and modified by Korobenko \& Sawyer \cite{SOS_I} to work for \(k=4\) and \(0<\alpha\leq1\). For our purposes a suitable control function is given by
\begin{equation}\label{eq:controlfunc}
    r(x)=\max_{\substack{0\leq j\leq k,\\j\;\mathrm{even}}}\sup_{|\xi|=1}[\partial^j_\xi f(x)]_+^\frac{1}{k-j+\alpha}.
\end{equation}

\begin{thm}\label{thm:cauchylike}
If \(f\in C^{k,\alpha}(\mathbb{R}^n)\) is non-negative, then for every \(\ell\leq k\) there exists a constant \(C\) such that \(|\nabla^\ell f(x)|\leq Cr(x)^{k-\ell+\alpha}\) pointwise in \(\mathbb{R}^n\).
\end{thm}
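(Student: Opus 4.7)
The plan is to reduce to a one-dimensional statement by restricting along lines, then to exploit non-negativity in two complementary ways: the algebraic identity of Lemma \ref{thm:specialodds} handles odd-order derivatives by a downward induction, and a direct Taylor argument handles the negative parts of even-order derivatives.

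\emph{Step 1: reduction.} Fix $x_0 \in \mathbb{R}^n$ and $\xi \in S^{n-1}$, and set $g(t) = f(x_0 + t\xi)$. Then $g \geq 0$, $g \in C^{k,\alpha}(\mathbb{R})$ with $[g^{(k)}]_\alpha$ controlled by the Hölder seminorms of the $k$-th order derivatives of $f$, and $g^{(j)}(0) = \partial_\xi^j f(x_0)$. Writing $R = r(x_0)$, the definition of $r$ gives $[g^{(j)}(0)]_+ \leq R^{k-j+\alpha}$ for every even $j \leq k$. Since any partial $\partial^\beta f(x_0)$ with $|\beta|=\ell$ is a linear combination of pure directional derivatives $\partial_\xi^\ell f(x_0)$ (by polarization), it suffices to show $|g^{(\ell)}(0)| \leq C R^{k-\ell+\alpha}$ for each $\ell \leq k$, with $C$ independent of $\xi$.

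\emph{Step 2: odd $\ell$.} I work by downward induction on odd values $\ell \leq k$. For the inductive step, apply Lemma \ref{thm:specialodds} with $s = (\ell+1)/2$ to obtain numbers $\eta_1,\dots,\eta_s$ and weights $q_1,\dots,q_s \geq 0$ with $\sum_i q_i \eta_i^j = 0$ for odd $j < \ell$ and $\sum_i q_i \eta_i^\ell = 1$. Since $g \geq 0$, the combination $\sum_i q_i g(\eta_i t)$ is non-negative, and expanding each $g(\eta_i t)$ in a Taylor polynomial of order $k$ with remainder $O(|t|^{k+\alpha})$ yields
\[
0 \leq \frac{g^{(\ell)}(0)\,t^\ell}{\ell!} + \sum_{\substack{\ell<j\leq k\\ j\text{ odd}}} \frac{d_j\,g^{(j)}(0)\,t^j}{j!} + \sum_{\substack{0\leq j\leq k\\ j\text{ even}}} \frac{c_j\,g^{(j)}(0)\,t^j}{j!} + O(|t|^{k+\alpha}),
\]
with $|d_j|$ bounded and $c_j = \sum_i q_i \eta_i^j \geq 0$. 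Using $[g^{(j)}(0)]_+ \leq R^{k-j+\alpha}$ for even $j$ and the inductive estimate $|g^{(j)}(0)| \leq C R^{k-j+\alpha}$ for odd $j > \ell$, I evaluate the inequality at $t = R$ and at $t = -R$: since $\ell$ is odd, $t^\ell$ changes sign between the two choices, so the two resulting inequalities combine to give $|g^{(\ell)}(0)| \leq C R^{k-\ell+\alpha}$. When $R = 0$, an elementary inductive Taylor argument using only $g \geq 0$ and the fact that $[g^{(j)}(0)]_+ = 0$ for every even $j \leq k$ forces every derivative of $g$ at $0$ of order $\leq k$ to vanish.

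\emph{Step 3: even $\ell$.} The definition of $r$ already controls $g^{(\ell)}(0)$ from above by $R^{k-\ell+\alpha}$, so only $[g^{(\ell)}(0)]_-$ needs treatment. From $g(t) \geq 0$ and Taylor's theorem,
\[
-\frac{g^{(\ell)}(0)\,t^\ell}{\ell!} \leq \sum_{j\neq \ell} \frac{g^{(j)}(0)\,t^j}{j!} + C|t|^{k+\alpha}.
\]
Setting $t = R > 0$ and bounding each summand on the right using $[g^{(j)}(0)]_+ \leq R^{k-j+\alpha}$ for even $j$ and the result of Step 2 for odd $j$, then dividing by $R^\ell$, gives $[g^{(\ell)}(0)]_- \leq C R^{k-\ell+\alpha}$. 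Polarization over $\xi \in S^{n-1}$ as in Step 1 converts the resulting directional bounds into the desired estimate $|\nabla^\ell f(x_0)| \leq C R^{k-\ell+\alpha}$.

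The main obstacle is the sign bookkeeping in Step 2: one must verify that the odd-$j$ terms with $j > \ell$ do not obstruct the two-sided extraction of a bound on $g^{(\ell)}(0)$ from the inequalities at $t = \pm R$. Lemma \ref{thm:specialodds} is designed precisely to kill every intermediate odd-order contribution (for odd $j < \ell$), so that the largest undetermined odd-order term in the combination is the one at $j = \ell$, which is what makes the downward induction close.
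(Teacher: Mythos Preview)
Your proposal is correct and follows essentially the same strategy as the paper: reduce to directional derivatives, combine Taylor expansions at scaled points with non-negative weights, and invoke Lemma~\ref{thm:specialodds} in a top-down pass over odd orders to isolate one odd derivative at a time. The only noteworthy difference is the order of the two cases: the paper treats even $\ell$ first by symmetrizing the Taylor inequality at $\pm\lambda$ (which kills all odd terms outright and uses only positive parts of even derivatives), and then handles odd $\ell$ using those even bounds; you instead run the odd induction first (using only $[g^{(j)}(0)]_+$ for even $j$, which suffices since the weights $c_j$ are non-negative) and then feed those odd bounds into a single-point Taylor estimate for the negative part of the even derivatives. Both orderings close, and the dependence structure is symmetric.
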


\begin{proof}
For even \(\ell\leq k\), we begin by bounding the negative part of the \(\ell^\mathrm{th}\)-order directional derivatives of \(f\). To do this we use non-negativity of the Taylor polynomial of \(f\) to obtain the following inequality for \(\lambda\in\mathbb{R}\) and \(\mathbb{\xi}\in S^{n-1}\):
\[
    0\leq f(x+\lambda\xi)\leq  \sum_{j=0}^k\frac{\lambda^j}{j!}\partial^j_\xi f(x)+\frac{1}{k!}[\nabla^kf]_{\alpha,\mathbb{R}^n}|\lambda|^{k+\alpha}.
\]
A similar estimate holds if \(\lambda\) is replaced with \(-\lambda\), and summing the resulting inequalities gives
\[
    0\leq \sum_{\substack{0\leq j\leq k,\\ j\;\textrm{even}}}\frac{\lambda^j}{j!}\partial^j_\xi f(x)+\frac{2}{k!}[\nabla^kf]_{\alpha,\mathbb{R}^n}\lambda^{k+\alpha}
\]
for \(\lambda>0\). Rearranging the inequality above and noting that \(\partial^j_\xi f(x)\leq [\partial^j_\xi f(x)]_+\), we find that
\[
    -\partial_\xi^\ell f(x)\leq\sum_{\substack{0\leq j\leq k,\\ j\;\textrm{even},\;j\neq\ell}}\frac{\ell!}{j!}\lambda^{j-\ell}[\partial^j_\xi f(x)]_++\frac{2\ell!}{k!}[\nabla^kf]_{\alpha,\mathbb{R}^n}\lambda^{k-\ell+\alpha}.
\]
Using \(\lambda=\!\!\displaystyle\max_{\substack{0\leq j\leq k,\\ j\;\textrm{even},\;j\neq\ell}}[\partial^j_\xi f(x)]_+^\frac{1}{k-j+\alpha}\) now gives \(\lambda^{j-\ell}[\partial^j_\xi f(x)]_+\leq \lambda^{k-\ell+\alpha}\) and \(-\partial_\xi^\ell f(x)\leq C\lambda^{k-\ell+\alpha}\).

Taking a supremum over all \(\xi\in S^{n-1}\) and using equivalence of the norms \(|\nabla^kf(x)|\) and \(\displaystyle\sup_{|\xi|=1}|\partial^k_\xi f(x)|\), we see now that for even values of \(\ell\) the required estimate holds pointwise:
\[
    |\nabla^\ell f(x)|\leq C\sup_{|\xi|=1}[\partial_\xi^\ell f(x)]_++C\max_{\substack{0\leq j\leq k,\\ j\;\textrm{even},\;j\neq\ell}}\sup_{|\xi|=1}[\partial^j_\xi f(x)]_+^\frac{k-\ell+\alpha}{k-j+\alpha}\leq Cr(x)^{k-\ell+\alpha}.
\]

For derivatives of odd order we again begin by using a Taylor expansion and non-negativity of \(f\) to see that if \(\lambda\in\mathbb{R}\) and \(|\xi|=1\) then
\[
    0\leq \sum_{j=0}^k\frac{\lambda^j}{j!}\partial^j_\xi f(x)+\frac{1}{k!}[\nabla^kf]_{\alpha,\mathbb{R}^n}|\lambda|^{k+\alpha}.
\]
This inequality continues to hold if we replace \(\lambda\) with \(\eta\lambda\) for any \(\eta\in\mathbb{R}\). First let \(\ell\) be the largest odd number that is less than or equal to \(k\) and set \(s=\frac{\ell+1}{2}\). Replacing  \(\lambda\) with \(\eta_1\lambda\) through \(\eta_s\lambda\) for some real numbers \(\eta_1,\dots,\eta_s\) to be chosen momentarily, and adding the resulting inequalities scaled by positive constants \(q_1,\dots,q_s\), we find that
\[
    0\leq \sum_{j=0}^k\bigg(\sum_{i=1}^sq_i\eta_i^j\bigg)\frac{\lambda^j}{j!}\partial^j_\xi f(x)+C\lambda^{k+\alpha}.
\]
The preceding inequality continues to hold if we replace \(\lambda\) with \(-\lambda\), and so if \(\lambda>0\) then
\[
    \bigg|\sum_{\substack{1\leq j\leq \ell,\\ j\;\textrm{odd}}}\bigg(\sum_{i=1}^sq_i\eta_i^j\bigg)\frac{\lambda^j}{j!}\partial^j_\xi f(x)\bigg|\leq C\bigg(\sum_{\substack{0\leq j\leq k,\\ j\;\textrm{even}}}\lambda^j\partial^j_\xi f(x)+\lambda^{k+\alpha}\bigg).
\]
For brevity we denote by \(F_\lambda(x)\) the right-hand side above. Using \Cref{thm:specialodds}, choose \(\eta_1,\dots,\eta_s\in\mathbb{R}\) and \(q_1,\dots,q_s\geq 0\) so that \(\sum_{i=1}^sq_i\eta_i^j=0\) for every odd \(j<\ell\) and \(\sum_{i=1}^sq_i\eta_i^\ell=1\). With these selections, the estimate above reads \(|\lambda^\ell\partial^\ell_\xi f(x)|\leq C F_\lambda(x)\).

Equipped with this estimate, we can repeat the argument above using different constants \(\tilde{q}_1,\dots,\tilde{q}_{s-1}\geq 0\) and \(\tilde{\eta}_1,\dots\tilde{\eta}_{s-1}\in\mathbb{R}\), and by combining non-negative Taylor polynomials we get
\[
    \bigg|\sum_{\substack{1\leq j\leq \ell-2,\\ j\;\textrm{odd}}}\bigg(\sum_{i=1}^{s-1}\tilde{q}_i\tilde{\eta}_i^j\bigg)\frac{\lambda^j}{j!}\partial^j_\xi f(x)\bigg|\leq C|\lambda^\ell\partial^\ell_\xi f(x)|+CF_\lambda(x)\leq CF_\lambda(x).\\
\]
Using \Cref{thm:specialodds} again, we can choose the constants \(\tilde{q}_i\) and \(\tilde{\eta}_i\) so that the left-hand side above reduces to \(|\lambda^{\ell-2}\partial^{\ell-2}_\xi f(x)|\). This gives \(|\lambda^{\ell-2}\partial^{\ell-2}_\xi f(x)|\leq CF_\lambda(x)\), and we see by repeating this argument that for each odd \(\ell\leq k\) and \(\lambda>0\),
\[
    |\partial_\xi^\ell f(x)|\leq  CF_\lambda(x)\leq  C\bigg(\sum_{\substack{0\leq j\leq k,\\ j\;\textrm{even}}}\lambda^{j-\ell}[\partial^j_\xi f(x)]_++\lambda^{k-\ell+\alpha}\bigg).
\]
Taking \(\lambda=\displaystyle\max_{\substack{0\leq j\leq k,\\ j\;\textrm{even}}}|\partial_\xi^jf(x)|^\frac{1}{k-j+\alpha}\) in this estimate gives \(|\partial_\xi^\ell f(x)|\leq C\lambda^{k-\ell+\alpha}\), and it follows that
\[
    |\nabla^\ell f(x)|\leq C\sup_{|\xi|=1}|\partial^\ell_\xi f(x)|\leq C\sup_{|\xi|=1}\lambda^{k-\ell+\alpha}\leq Cr(x)^{k-\ell+\alpha}.
\]
Thus the claimed derivative estimates hold for every \(\ell\leq k\) as claimed.
\end{proof}

Following \cite{SOS_I,Tataru}, we now show that \(r\) is slowly-varying.

\begin{lem}\label{lem:slowvar}
There exists a constant \(\nu>0\) such that \(|r(x)-r(y)|\leq \frac{1}{4}r(x)\) when \(|x-y|\leq \nu r(x)\).
\end{lem}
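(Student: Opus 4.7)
The plan is to show that both $r(y) \leq (1+\tfrac14) r(x)$ and $r(y) \geq (1-\tfrac14) r(x)$ whenever $|x-y|\leq \nu r(x)$ for a suitable small $\nu$, by writing each side of $r$ as a specific directional derivative raised to the appropriate power, and then estimating the difference of those derivatives using Lemma \ref{lem:Taylorest} together with the Cauchy-type bound of Theorem \ref{thm:cauchylike}. The case $r(x)=0$ is trivial because then the hypothesis forces $y=x$, so we may assume $r(x)>0$.

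The first step is a derivative-by-derivative estimate. Fix an even $j\leq k$ and $\xi\in S^{n-1}$. Expanding $\partial^j_\xi$ as a linear combination of multi-index derivatives $\partial^\beta f$ with $|\beta|=j$, Lemma \ref{lem:Taylorest} gives
\[
    |\partial^j_\xi f(y)-\partial^j_\xi f(x)|\leq C\sum_{1\leq|\gamma|\leq k-j}|x-y|^{|\gamma|}|\nabla^{j+|\gamma|}f(x)|+C|x-y|^{k-j+\alpha}.
\]
Substituting the bound $|\nabla^{j+|\gamma|}f(x)|\leq Cr(x)^{k-j-|\gamma|+\alpha}$ from Theorem \ref{thm:cauchylike} (when $j<k$) or using $f\in C^{k,\alpha}$ directly (when $j=k$), together with $|x-y|\leq \nu r(x)$ and $\nu\leq 1$, yields the uniform estimate
\[
    \bigl|\partial^j_\xi f(y)-\partial^j_\xi f(x)\bigr|\leq C\nu^{\alpha}\,r(x)^{k-j+\alpha}
\]
for every even $j\leq k$ and every unit vector $\xi$.

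The second step converts this into the slow-variation estimate for $r$ itself. Since the max/sup in the definition of $r(y)$ is attained (as $\xi$ ranges over the compact sphere and $j$ over a finite set), there exist $j_0$ and $\xi_0$ with $r(y)^{k-j_0+\alpha}=[\partial^{j_0}_{\xi_0} f(y)]_+$. Applying the first step at this particular $(j_0,\xi_0)$ and using $[\partial^{j_0}_{\xi_0} f(x)]_+\leq r(x)^{k-j_0+\alpha}$,
\[
    r(y)^{k-j_0+\alpha}\leq [\partial^{j_0}_{\xi_0} f(x)]_+ + C\nu^{\alpha}r(x)^{k-j_0+\alpha}\leq (1+C\nu^{\alpha})r(x)^{k-j_0+\alpha},
\]
so $r(y)\leq (1+C\nu^{\alpha})^{1/(k-j_0+\alpha)}r(x)\leq (1+C\nu^{\alpha})^{1/\alpha}r(x)$. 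The same argument applied with $r(x)$ in the place of $r(y)$ (picking $(j_1,\xi_1)$ achieving the max at $x$) gives the reverse inequality $r(x)^{k-j_1+\alpha}\leq r(y)^{k-j_1+\alpha}+C\nu^{\alpha}r(x)^{k-j_1+\alpha}$, hence $r(y)\geq (1-C\nu^{\alpha})^{1/\alpha}r(x)$. Choosing $\nu$ small enough that both $(1+C\nu^{\alpha})^{1/\alpha}\leq \tfrac{5}{4}$ and $(1-C\nu^{\alpha})^{1/\alpha}\geq \tfrac{3}{4}$ completes the proof.

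The main obstacle is bookkeeping: one must keep track of which exponent $1/(k-j+\alpha)$ is dominant as $j$ ranges through even integers up to $k$, and make sure the powers of $\nu$ arising from Taylor versus Hölder tails combine into a single usable small quantity (handled cleanly by $\nu^{\alpha}$ under the assumption $\nu\leq 1$). A minor point is that the bound on $(1-C\nu^\alpha)^{1/\alpha}$ requires $C\nu^\alpha<1$, which is compatible with all other smallness requirements on $\nu$.
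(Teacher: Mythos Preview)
Your proof is correct, and it takes a cleaner route than the paper's. Both arguments begin the same way, using Lemma~\ref{lem:Taylorest} together with Theorem~\ref{thm:cauchylike} to show that $|\partial^j_\xi f(y)-\partial^j_\xi f(x)|\leq C\nu^\alpha r(x)^{k-j+\alpha}$ for all even $j$ and unit $\xi$ (this is essentially the paper's estimate \eqref{eq:omegaeqn}). The difference lies in how this is converted into control of $r$. The paper bounds $|r(x)-r(y)|$ via the pointwise inequality \eqref{eq:intermed4}, which forces it to estimate differences of the powers $[\partial^j_\xi f]_+^{1/(k-j+\alpha)}$; for $j<k$ concavity handles this, but for $j=k$ the exponent $1/\alpha$ exceeds $1$ and the paper invokes the auxiliary Lemma~\ref{lem:almostLipschitz} to deal with the convex case. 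You instead pick the maximizing pair $(j_0,\xi_0)$ at $y$ (respectively $(j_1,\xi_1)$ at $x$), compare the unexponentiated quantities first, and only then take the root---so the awkward difference-of-powers never arises and Lemma~\ref{lem:almostLipschitz} is not needed. The price is that your argument gives the two one-sided bounds $r(y)\leq\tfrac54 r(x)$ and $r(y)\geq\tfrac34 r(x)$ separately rather than a single bound on $|r(x)-r(y)|$, but that is exactly what the slow-variation condition asks for.
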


\textit{Remark}: The choice of the constant \(\frac{1}{4}\) in the preceding lemma is largely arbitrary; any constant between \(0\) and \(1\) will suffice, and we only choose \(\frac{1}{4}\) to simplify some later calculations. Throughout this section we continue to put size restrictions on \(\nu\). These ensure that in the end, it is a small positive constant whose value is inconsequential for our final construction. We encounter no issue in occasionally asking that \(\nu\) be smaller than previously assumed.

\begin{proof}
Given \(x,y\in\mathbb{R}^n\), it follows from the definition of \(r\) and straightforward properties of the maximum and supremum that
\begin{equation}\label{eq:intermed4}
    |r(x)-r(y)|\leq \max_{\substack{0\leq j\leq k,\\j\;\mathrm{even}}}\big\{\sup_{|\xi|=1}\big|[\partial^j_\xi f(x)]_+^\frac{1}{k-j+\alpha}-[\partial^j_\xi f(y)]_+^\frac{1}{k-j+\alpha}\big|\big\}.
\end{equation}
Now we assume that \(|x-y|\leq \nu r(x)\). If \(j<k\) then \(k-j+\alpha> 1\), and a concavity estimate gives
\[
    \big|[\partial^j_\xi f(x)]_+^\frac{1}{k-j+\alpha}-[\partial^j_\xi f(y)]_+^\frac{1}{k-j+\alpha}\big|\leq |\partial^j_\xi f(x)-\partial^j_\xi f(y)|^\frac{1}{k-j+\alpha}.
\]
To estimate the right-hand side above, we expand the directional operator \(\partial_\xi^j\) to write
\[
    |\partial^j_\xi f(x)-\partial^j_\xi f(y)|=\bigg|\sum_{|\beta|=j}\frac{\beta!}{j!}\xi^\beta(\partial^\beta f(x)-\partial^\beta f(y))\bigg|\leq \sum_{|\beta|=j}\frac{\beta!}{j!}|\partial^\beta f(x)-\partial^\beta f(y)|.
\]
Then, \Cref{lem:Taylorest} together with the bounds \(|\nabla^\ell f(x)|\leq Cr(x)^{k-\ell+\alpha}\) and \(|x-y|\leq \nu r(x)\) gives us
\[
    |\partial^\beta f(x)-\partial^\beta f(y)|\leq C\sum_{1\leq |\gamma|\leq k-|\beta|}\nu^{|\gamma|}r(x)^{k-|\beta|+\alpha}+C\nu^{k-|\beta|+\alpha}r(x)^{k-|\beta|+\alpha}.
\]
Every power on \(\nu\) above is at least one, since \(|\beta|<k\). Thus, assuming that \(\nu\leq 1\), we find that there exists a constant \(C\) for which
\begin{equation}\label{eq:omegaeqn}
    |\partial^\beta f(x)-\partial^\beta f(y)|\leq C\nu r(x)^{k-|\beta|+\alpha}.
\end{equation}
Therefore \(|\partial^j_\xi f(x)-\partial^j_\xi f(y)|\leq C\nu r(x)^{k-|\beta|+\alpha}\), and since this bound is independent of \(\xi\) we can choose a small number \(\nu\) which is independent of \(x\) such that the following holds:
\[
    \sup_{|\xi|=1}\big|[\partial^j_\xi f(x)]_+^\frac{1}{k-j+\alpha}-[\partial^j_\xi f(y)]_+^\frac{1}{k-j+\alpha}\big|\leq  C\nu^\frac{1}{k-j+\alpha}r(x)\leq \frac{1}{4}r(x).
\]

It remains to consider the case \(j=k\), which arises when \(k\) is even. This time we employ \Cref{lem:almostLipschitz}, taking \(\beta=\frac{1}{\alpha}\) and \(\varepsilon=\nu^\alpha\). Combining with the inequality \(|x-y|\leq \nu r(x)\), we get
\[
    \sup_{|\xi|=1}\big|[\partial^k_\xi f(x)]_+^\frac{1}{\alpha}-[\partial^k_\xi f(y)]_+^\frac{1}{\alpha}\big|\leq\frac{C\nu (1+\nu^\alpha)r(x)}{((1+\nu^\alpha)^{\frac{\alpha}{1-\alpha}}-1)^\frac{1-\alpha}{\alpha}}+\nu^\alpha\max\{r(x),r(y)\}.
\]
If \(\nu\leq 1\) the quotient above is bounded by \(C\nu^\alpha r(x)\), for a constant \(C\) that depends only on \(\alpha\) and the H\"older semi-norms of \(f\). Since \(\max\{r(x),r(y)\}\leq r(x)+|r(x)-r(y)|\) by non-negativity of \(r\), when \(\nu\) is small enough we  have
\[
    \sup_{|\xi|=1}\big|[\partial^k_\xi f(x)]_+^\frac{1}{\alpha}-[\partial^k_\xi f(y)]_+^\frac{1}{\alpha}\big|\leq C\nu^\alpha r(x)+\nu^\alpha|r(x)-r(y)|\leq \frac{1}{8}r(x)+\frac{1}{2}|r(x)-r(y)|.
\]
It follows from these bounds and \eqref{eq:intermed4} that \(|r(x)-r(y)|\leq \frac{1}{4}r(x)\), as we wished to show.
\end{proof}

The following consequence of estimate \eqref{eq:omegaeqn} will also be useful momentarily.

\begin{cor}\label{cor:supplementary}
Let \(f\in C^{k,\alpha}(\mathbb{R}^n)\) be non-negative for \(k\geq 2\), and let \(r\) be as in \eqref{eq:controlfunc}. If \(\nu\) is a small positive constant, then there exists \(\omega>0\) for which
\begin{itemize}
    \item[(1)] \(|f(x)-f(y)|\leq \frac{1}{2}\omega \nu r(x)^{k+\alpha}\) whenever \(|x-y|\leq \nu r(x)\),
    \item[(2)] \(|\partial^\beta f(x)-\partial^\beta f(y)|\leq \frac{1}{2}r(x)^{k-2+\alpha}\) whenever \(|x-y|\leq \sqrt{\nu^2+6\omega\nu}r(x)\) and \(|\beta|=2\).
\end{itemize}
\end{cor}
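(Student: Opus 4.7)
The plan is to derive both bounds by reapplying the argument that produced \eqref{eq:omegaeqn}, once with a zero-order multi-index for (1) and once with a second-order one for (2), choosing the constants $\omega$ and $\nu$ in the correct order.

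For (1), I would invoke \eqref{eq:omegaeqn} with the empty multi-index $\beta=0$; this is permissible because its derivation only uses Lemma \ref{lem:Taylorest} and Theorem \ref{thm:cauchylike}, both of which apply when $|\beta|=0<k$. The resulting inequality $|f(x)-f(y)|\leq C_0\nu\, r(x)^{k+\alpha}$ (valid for $\nu\leq 1$) yields (1) immediately on setting $\omega=2C_0$. Crucially, $C_0$, and hence $\omega$, depends only on $f$, $k$, $\alpha$, $n$ and is independent of $\nu$.

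For (2), the quantity $\rho:=\sqrt{\nu^2+6\omega\nu}$ is now a function of $\nu$ alone and tends to zero with $\nu$, so by further shrinking $\nu$ I may assume $\rho\leq 1$. When $k\geq 3$ we have $|\beta|=2<k$, and Lemma \ref{lem:Taylorest} together with Theorem \ref{thm:cauchylike} on the enlarged ball $|x-y|\leq \rho\, r(x)$ reproduces the calculation that led to \eqref{eq:omegaeqn}, yielding
\[
|\partial^\beta f(x)-\partial^\beta f(y)|\leq C\rho\, r(x)^{k-2+\alpha}.
\]
When $k=2$, Lemma \ref{lem:Taylorest} is unavailable since $|\beta|=k$, so I would instead appeal directly to the H\"older semi-norm to obtain $|\partial^\beta f(x)-\partial^\beta f(y)|\leq [\partial^\beta f]_{\alpha,\mathbb{R}^n}\rho^\alpha r(x)^{\alpha}=C\rho^\alpha r(x)^{k-2+\alpha}$. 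In either case the prefactor $C\rho^{\min\{1,\alpha\}}$ can be forced below $1/2$ by a final smallness condition on $\nu$ depending only on $\omega$, giving (2). The one subtlety is that $\omega$ must be fixed from part (1) before the smallness condition required for (2) is imposed; beyond this bookkeeping there is no genuine obstacle, as \eqref{eq:omegaeqn} and Theorem \ref{thm:cauchylike} do all of the work.
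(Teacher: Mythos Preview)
Your proposal is correct and matches the paper's own proof almost verbatim: the paper obtains (1) by setting $\omega=2C$ with $C$ the constant from \eqref{eq:omegaeqn} at $|\beta|=0$, and obtains (2) by rerunning \eqref{eq:omegaeqn} at $|\beta|=2$ with $\nu$ replaced by $\sqrt{\nu^2+6\omega\nu}$ and then shrinking $\nu$. Your explicit treatment of the borderline case $k=2$ via the H\"older condition is a welcome clarification, since the derivation of \eqref{eq:omegaeqn} in Lemma~\ref{lem:slowvar} is literally carried out only for $|\beta|<k$; the paper's one-line proof glosses over this edge case, but your argument fills it in correctly.
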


\begin{proof}
Estimate \textit{(1)} follows by taking \(\omega=2C\) for \(C\) as in \eqref{eq:omegaeqn} when \(|\beta|=0\), while \textit{(2)} is given by replacing \(\nu\) with \(\sqrt{\nu^2+6\omega\nu}\) in \eqref{eq:omegaeqn} when \(|\beta|=2\), and once again choosing \(\nu\) small.
\end{proof}

For the remainder of this section, we fix a maximal dyadic cube \(Q\) which satisfies
\[
    \ell(Q)\leq \frac{\nu}{2\sqrt{n}}\inf_{Q}r.
\]
Letting \(x\) denote the center of \(Q\) and \(r_Q=\inf_Qr\), we consider two cases: when \(f(x)\geq \omega\nu r_Q^{k+\alpha}\), and when \(f(x)<\omega\nu r_Q^{k+\alpha}\). In the first case, we show that \(f\) has a half-regular root on \(2Q\).

\begin{lem}\label{lem:local1}
Let \(f\), \(r\), and \(\nu\) be as above and assume that \(f(x)\geq \omega\nu r_Q^{k+\alpha}\). Then for any multi-index \(\beta\) of order \(|\beta|\leq \frac{k+\alpha}{2}\), and for any \(y\in 2Q\), the following estimates hold:
\begin{itemize}
    \item[(1)] \(|\partial^\beta\sqrt{f(y)}|\leq Cr(y)^{\frac{k+\alpha}{2}-|\beta|}\),
    \item[(2)] \([\partial^\beta\sqrt{f}]_{\frac{\alpha}{2}}(y)\leq Cr(y)^{\frac{k}{2}-|\beta|}\) if \(k\) is even,
    \item[(3)] \([\partial^\beta\sqrt{f}]_{\frac{1+\alpha}{2}}(y)\leq Cr(y)^{\frac{k-1}{2}-|\beta|}\) if \(k\) is odd.
\end{itemize} 
\end{lem}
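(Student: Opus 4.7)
The hypothesis $f(x) \geq \omega\nu r_Q^{k+\alpha}$ forces $f$ to remain bounded below by a constant multiple of $r^{k+\alpha}$ throughout $2Q$; once this lower bound is established, $\sqrt{f}$ inherits the full $C^{k,\alpha}$ regularity of $f$ on $2Q$, and all three estimates follow from a chain-rule computation combined with Theorem \ref{thm:cauchylike}. I will sketch the three steps below.

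\textit{Step 1 (propagating the lower bound).} For any $y \in 2Q$, the cube selection gives
\[
|x-y| \leq \sqrt{n}\,\ell(Q) \leq \tfrac{\nu}{2}\,r_Q \leq \tfrac{\nu}{2}\,r(x).
\]
Slow variation of $r$ (Lemma \ref{lem:slowvar}, with its constant taken smaller if necessary as permitted by the remark following that lemma) yields that $r(y)$, $r(x)$, and $r_Q$ are pairwise comparable throughout $2Q$. Corollary \ref{cor:supplementary}(1) then furnishes $|f(x) - f(y)| \leq \tfrac{\omega\nu}{2} r(x)^{k+\alpha}$, and combining this with the hypothesis $f(x) \geq \omega\nu r_Q^{k+\alpha}$ and the comparability of $r(x)$ with $r_Q$ (possibly after further shrinking $\nu$) produces a positive constant $c$ such that
\[
f(y) \geq c\,r(y)^{k+\alpha} \quad \text{for every } y \in 2Q.
\]

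\textit{Step 2 (proof of (1)).} The case $|\beta|=0$ is immediate: the $j=0$ term in the definition of $r$ gives $r \geq f^{1/(k+\alpha)}$, hence $\sqrt{f}\leq r^{(k+\alpha)/2}$. For $|\beta|\geq 1$, writing $\sqrt{f} = \phi\circ f$ with $\phi(z)=\sqrt{z}$ and noting that $\phi$ depends only on the single variable $z$, the chain rule of Lemma \ref{lem:genchain} collapses to
\[
\partial^\beta\sqrt{f} \;=\; \sum_{\Gamma \in P(\beta)} C_{\beta,\Gamma}\,f^{1/2-|\Gamma|}\prod_{\gamma\in\Gamma}\partial^\gamma f.
\]
Applying $|\partial^\gamma f|\leq C r^{k-|\gamma|+\alpha}$ from Theorem \ref{thm:cauchylike} together with the lower bound $f\geq c r^{k+\alpha}$ of Step 1, each term is bounded by
\[
C\,r^{(k+\alpha)(1/2-|\Gamma|)}\prod_{\gamma\in\Gamma}r^{k-|\gamma|+\alpha} \;=\; C\,r^{(k+\alpha)/2-|\beta|},
\]
because $\sum_{\gamma\in\Gamma}(k-|\gamma|+\alpha) = |\Gamma|(k+\alpha)-|\beta|$.

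\textit{Step 3 (proofs of (2) and (3)).} Since $f \geq c r^{k+\alpha}>0$ on $2Q$, $\sqrt{f}\in C^{k,\alpha}(2Q)$, so $\partial^\beta\sqrt{f}$ is of class $C^{k-|\beta|,\alpha}$ on a neighbourhood of every $y\in 2Q$. For $|\beta| \leq (k+\alpha)/2$ with $k\geq 2$ one checks $|\beta| < k$, so $\partial^\beta\sqrt{f}$ is in particular $C^1$ near $y$. The Mean Value Theorem then forces the pointwise Hölder seminorm to vanish at every exponent strictly less than $1$: the difference quotient $|\partial^\beta\sqrt{f}(z)-\partial^\beta\sqrt{f}(w)|/|z-w|^{\alpha'}$ is dominated by a constant times $|z-w|^{1-\alpha'}\to 0$ as $z,w\to y$. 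This immediately handles (2), where $\alpha'=\alpha/2\leq 1/2$, and (3) when $\alpha<1$, where $\alpha'=(1+\alpha)/2<1$. The only remaining case is (3) with $\alpha=1$, for which the seminorm equals $|\nabla\partial^\beta\sqrt{f}(y)|$; since $|\beta|+1 \leq (k+3)/2 \leq k$ for $k\geq 3$ odd, the chain-rule computation of Step 2 extends verbatim to order $|\beta|+1$ and yields $|\nabla\partial^\beta\sqrt{f}(y)|\leq C\,r(y)^{(k-1)/2-|\beta|}$, as required.

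The main technical point is the constant-chasing in Step 1: the slow-variation constant from Lemma \ref{lem:slowvar} must be chosen small enough that the quantitative lower bound on $f$ survives passage from $r_Q$ to $r(y)$. Once this lower bound is in hand, both the chain-rule calculation and the collapse of the pointwise seminorm for $C^1$ functions are essentially algebraic; no further nonlinear arguments are required.
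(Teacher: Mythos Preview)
Your proof is correct, and Steps 1 and 2 coincide with the paper's argument essentially verbatim. The difference lies in Step 3.

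For the semi-norm estimates the paper expands $\partial^\beta\sqrt{f}(w)-\partial^\beta\sqrt{f}(z)$ via the chain-rule formula, splits with the triangle inequality into terms of the form $f^{1/2-|\Gamma|}\big|\prod\partial^\gamma f(w)-\prod\partial^\gamma f(z)\big|$ and $\big|f(w)^{1/2-|\Gamma|}-f(z)^{1/2-|\Gamma|}\big|\prod|\partial^\gamma f|$, and bounds each using the mean value theorem together with the cube geometry $|w-z|\leq Cr(y)^{1-\lambda}|w-z|^\lambda$. This yields the uniform two-point inequality
\[
|\partial^\beta\sqrt{f}(w)-\partial^\beta\sqrt{f}(z)|\leq Cr(y)^{\Lambda-|\beta|}|w-z|^\lambda\qquad\text{for all }w,z\in 2Q,
\]
from which the pointwise bound follows by passing to the $\limsup$.

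Your shortcut instead observes that $\partial^\beta\sqrt{f}$ is $C^1$ near each $y\in 2Q$ (since $|\beta|<k$ and $f$ is bounded away from zero there), so the $\limsup$ defining $[\partial^\beta\sqrt{f}]_\lambda(y)$ is in fact zero whenever $\lambda<1$, and the Lipschitz case $\lambda=1$ reduces to the derivative bound of Step 2 at order $|\beta|+1$. This is entirely valid for the lemma as stated. What you lose is the uniform two-point estimate on $2Q$: your argument gives only the degenerate limit, whereas the paper's computation controls the H\"older quotient globally on the cube with a constant independent of $Q$. That uniform bound is what is implicitly used downstream---in the corollary asserting $\sqrt{f}\in C^{\frac{k+\alpha}{2}}(2Q)$ and in the recombination step of Theorem~\ref{main 1}---since a pointwise semi-norm that vanishes identically carries no information about the global H\"older norm. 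So your approach proves the lemma but sheds precisely the content that the paper's more laborious route preserves for later use.
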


\begin{proof}
Using Lemma \ref{lem:genchain} we bound derivatives of \(f\) at \(y\in 2Q\), estimating pointwise to first get
\[
    |\partial^\beta\sqrt{f(y)}|=\bigg|\sum_{\Gamma\in P(\beta)}C_{\beta,\Gamma}f(y)^{\frac{1}{2}-|\Gamma|}\prod_{\gamma\in\Gamma}\partial^\gamma f(y)\bigg|\leq C\sum_{\Gamma\in P(\beta)}f(y)^{\frac{1}{2}-|\Gamma|}\prod_{\gamma\in\Gamma}|\partial^\gamma f(y)|.
\]
If \(y\in 2Q\) then \(|x-y|\leq \nu r_Q\), meaning that \(f(y)\geq \frac{1}{2}\omega\nu r_Q^{k+\alpha}\) by property \textit{(1)} of Corollary \ref{cor:supplementary}. Additionally, Theorem \ref{thm:cauchylike} and the fact that \(r(y)\leq Cr_Q\) for \(y\in 2Q\) together show that
\[
    |\partial^\beta\sqrt{f(y)}|\leq C\sum_{\Gamma\in P(\beta)}r_Q^{(k+\alpha)(\frac{1}{2}-|\Gamma|)}\prod_{\gamma\in\Gamma}r(y)^{k+\alpha-|\gamma|}\leq Cr(y)^{\frac{k+\alpha}{2}-|\beta|}.
\]

For the semi-norm estimates \textit{(2)} and \textit{(3)}, we treat the odd and even cases simultaneously by letting \(\Lambda\) denote the integer part of \(\frac{k}{2}\) and setting \(\lambda=\frac{k+\alpha}{2}-\Lambda\). To prove the claimed results it suffices to show that \([\partial^\beta\sqrt{f}]_{\lambda}(y)\leq Cr(y)^{\Lambda-|\beta|}\) whenever \(|\beta|\leq \Lambda\) and \(y\in 2Q\). Given \(\beta\in\mathbb{N}_0^n\) observe that by Lemma \ref{lem:genchain} and the triangle inequality, we have for \(w,z\in 2Q\) that
\begin{equation}\label{eq:ctrl}
\begin{split}
    |\partial^\beta\sqrt{f(w)}-\partial^\beta\sqrt{f(z)}|&\leq C\sum_{\Gamma\in P(\beta)} f(w)^{\frac{1}{2}-|\Gamma|}\bigg|\prod_{\gamma\in\Gamma}\partial^\gamma f(w)-\prod_{\gamma\in\Gamma}\partial^\gamma f(z)\bigg|\\
    &\qquad+C\sum_{\Gamma\in P(\beta)} |f(w)^{\frac{1}{2}-|\Gamma|}-f(z)^{\frac{1}{2}-|\Gamma|}|\prod_{\gamma\in\Gamma}|\partial^\gamma f(z)|.    
\end{split}
\end{equation}
We treat the terms on the right-hand side above separately, first using the mean value theorem and slow variation of \(r\) on \(2Q\) to obtain the bound
\[
    |f(w)^{\frac{1}{2}-|\Gamma|}-f(z)^{\frac{1}{2}-|\Gamma|}|\leq Cr(y)^{\frac{k+\alpha}{2}-1-|\Gamma|(k+\alpha)}|w-z|\leq Cr(y)^{\frac{k+\alpha}{2}-|\Gamma|(k+\alpha)-\lambda}|w-z|^\lambda.
\]
Explicitly, we have used here that \(f\) is bounded below by a multiple of \(r(y)^{k+\alpha}\) on \(2Q\), followed by the fact that \(|w-z|\leq Cr(y)^{1-\lambda}|w-z|^\lambda\) for \(w,z\in 2Q\). Bounding the other derivatives of \(f\) on \(2Q\) in the same fashion, we see that
\[
    |f(w)^{\frac{1}{2}-|\Gamma|}-f(z)^{\frac{1}{2}-|\Gamma|}|\prod_{\gamma\in\Gamma}|\partial^\gamma f(z)|\leq Cr(y)^{\frac{k+\alpha}{2}-|\Gamma|(k+\alpha)-\lambda}|w-z|^\lambda\prod_{\gamma\in\Gamma}r(y)^{k+\alpha-|\gamma|}.
\]
Since \(\Gamma\in P(\beta)\), we can simplify the right-hand side above to \(Cr(y)^{\Lambda-|\beta|}|w-z|^\lambda\). To estimate the remaining term of \eqref{eq:ctrl}, we observe that
\begin{equation}\label{eq:sub}
    \bigg|\prod_{\gamma\in\Gamma}\partial^\gamma f(w)-\prod_{\gamma\in\Gamma}\partial^\gamma f(z)\bigg|\leq \sum_{\gamma\in\Gamma}\bigg(\prod_{\mu\in\Gamma\setminus\{\gamma\}}\sup_{B}|\partial^\mu f|\bigg)|\partial^\gamma f(w)-\partial^\gamma f(z)|.    
\end{equation}

Arguing as above, we can control the product in \eqref{eq:sub} by \(Cr(y)^{(k+\alpha)(|\Gamma|-1)-|\beta|+|\gamma|}\). Further, since \(|\beta|\leq\Lambda< \frac{k+\alpha}{2}\), whenever \(\gamma\in P(\beta)\) we must have \(|\gamma|<k\), meaning that we can use the mean value theorem and Theorem \ref{thm:cauchylike} to get for \(w,z\in 2Q\) that
\[
    |\partial^\gamma f(w)-\partial^\gamma f(z)|\leq Cr(y)^{k+\alpha-|\gamma|-\lambda}|w-z|^\lambda.
\]
Therefore the left-hand side of \eqref{eq:sub} is bounded by \(Cr(y)^{(k+\alpha)|\Gamma|-|\beta|-\lambda}|w-z|^\lambda\). Consequently,
\[
    f(w)^{\frac{1}{2}-|\Gamma|}\bigg|\prod_{\gamma\in\Gamma}\partial^\gamma f(w)-\prod_{\gamma\in\Gamma}\partial^\gamma f(z)\bigg|\leq Cr(y)^{\frac{k+\alpha}{2}-|\beta|-\lambda}|y-z|^\lambda=Cr(y)^{\Lambda-|\beta|}|y-z|^\lambda.
\]
Now from \eqref{eq:ctrl} and the definition of the pointwise semi-norm, we observe for any \(y\in 2Q\) that
\[
    [\partial^\beta\sqrt{f}]_\lambda(y)=\limsup_{w,z\rightarrow y}\frac{|\partial^\beta\sqrt{f(w)}-\partial^\beta\sqrt{f(z)}|}{|w-z|^\lambda}\leq Cr(y)^{\Lambda-|\beta|}
\]
Critically, the constant \(C\) is independent of \(Q\). Hence, properties \textit{(2)} and \textit{(3)} hold as claimed.
\end{proof}

If \(k\) is even and \(\beta\) is a multi-index of order \(\frac{k}{2}\), then the preceding result shows that \([\partial^\beta \sqrt{f}]_\frac{\alpha}{2}(y)\) is uniformly bounded on \(2Q\), meaning that \([\partial^\beta \sqrt{f}]_{\frac{\alpha}{2},2Q}\) is finite. A similar bound holds for odd-order derivatives and as a consequence we have the following.

\begin{cor}
Let \(f\in C^{k,\alpha}(\mathbb{R}^n)\) be non-negative and let \(r\) be as in \eqref{eq:controlfunc}. If \(\nu\) is a small positive constant and \(f(x)\geq \omega\nu r_Q^{k+\alpha}\), then \(\sqrt{f}\in C^\frac{k+\alpha}{2}(2Q)\).
\end{cor}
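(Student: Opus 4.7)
The plan is to observe that this corollary is essentially a direct consequence of Lemma \ref{lem:local1} combined with the fact, recalled in the preliminaries, that the global H\"older semi-norm equals the supremum of the pointwise semi-norm, i.e. $[g]_{\alpha,\Omega} = \sup_{y\in\Omega}[g]_\alpha(y)$. There is essentially no new estimate to derive; the work is in assembling the existing bounds and invoking the correct definition of the half-regular space \eqref{halfreg}.

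First I would split into the two cases according to the parity of $k$. If $k$ is even, then by \eqref{halfreg} we have $C^{\frac{k+\alpha}{2}}(2Q) = C^{k/2,\alpha/2}(2Q)$, so I need to show that $\partial^\beta\sqrt{f}$ exists and is bounded on $2Q$ for every multi-index with $|\beta|\leq k/2$, and that $[\partial^\beta\sqrt{f}]_{\alpha/2,2Q}<\infty$ for $|\beta|=k/2$. Property \textit{(1)} of Lemma \ref{lem:local1} gives the pointwise bound $|\partial^\beta\sqrt{f}(y)|\leq Cr(y)^{\frac{k+\alpha}{2}-|\beta|}$ for each such $\beta$. Since $r$ is slowly varying and $2Q$ is contained in a ball of radius $\leq\nu r_Q$ about the centre of $Q$, we have $r(y)\leq Cr_Q$ uniformly on $2Q$, giving a uniform bound on $\partial^\beta\sqrt{f}$ there. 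For the top-order semi-norm, property \textit{(2)} yields $[\partial^\beta\sqrt{f}]_{\alpha/2}(y)\leq C$ uniformly for $|\beta|=k/2$ and $y\in 2Q$, since the exponent $k/2-|\beta|$ vanishes. Taking a supremum over $y\in 2Q$ and invoking $[\cdot]_{\alpha/2,2Q}=\sup_{y\in 2Q}[\cdot]_{\alpha/2}(y)$ then finishes this case.

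The odd case is handled identically, using instead that $C^{\frac{k+\alpha}{2}}(2Q)=C^{(k-1)/2,(1+\alpha)/2}(2Q)$ and appealing to property \textit{(3)} in place of \textit{(2)}. Again the uniform bound on derivatives of order $\leq(k-1)/2$ follows from \textit{(1)}, and the finiteness of $[\partial^\beta\sqrt{f}]_{(1+\alpha)/2,2Q}$ for $|\beta|=(k-1)/2$ is immediate because the exponent $(k-1)/2-|\beta|$ in \textit{(3)} becomes zero.

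There is no real obstacle here: the only subtlety to be careful about is that Lemma \ref{lem:local1} hands us a pointwise semi-norm bound, not a global one, so one must explicitly cite the identity $[g]_{\alpha,\Omega}=\sup_\Omega[g]_\alpha$ from the preliminaries. Once that is in hand, the corollary drops out in a few lines.
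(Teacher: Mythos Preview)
Your proposal is correct and matches the paper's own reasoning essentially line for line: the paper simply remarks, just before the corollary, that for top-order $\beta$ the pointwise semi-norm bound from Lemma~\ref{lem:local1} is uniform on $2Q$ (the exponent vanishes), and then invokes the identity $[g]_{\alpha,\Omega}=\sup_{\Omega}[g]_\alpha$ from the preliminaries. Your explicit parity split and appeal to slow variation for the lower-order derivative bounds are a bit more than the paper's bare definition of $C^{k,\alpha}(\Omega)$ strictly requires, but this does no harm.
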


On the other hand, if \(f(x)<\omega\nu r_Q^{k+\alpha}\) then this corollary fails. The best we can do is form a local decomposition of \(f\) on \(2Q\) that takes the form \(f=g^2+F,\) where \(g\) is half as regular as \(f\) and the remainder \(F\in C^{k,\alpha}(2Q)\) depends on \(n-1\) variables. The construction of this local decomposition is patterned after the arguments in \cite{SOS_I,Tataru}, but we go to greater lengths to obtain estimates for every \(k\geq 0\), not just small values of \(k\).

In what follows, given \(x\in\mathbb{R}^n\) we write \(x=(x',x_n)\) for \(x'\in\mathbb{R}^{n-1}\) and \(x_n\in\mathbb{R}\). We also let \(Q'=\{x':x\in Q\}\) to denote the projection of \(Q\) onto \(\mathbb{R}^{n-1}\) in the \(x_n\) direction. Equipped with this notation we have the following result.

\begin{lem}\label{lem:minia}
Let \(f\in C^{k,\alpha}(\mathbb{R}^n)\) be non-negative and satisfy \(f(x)<\omega \nu r_Q^{k+\alpha}\). Assume also that \(r\) satisfies the following pointwise bound everywhere:
\vspace{-0.25em}
\begin{equation}\label{eq:needed}
    r(y)\leq \max\bigg\{f(y)^\frac{1}{k+\alpha},\sup_{|\xi|=1}[\partial^2_\xi f(y)]_+^\frac{1}{k-2+\alpha}\bigg\}.
\end{equation}
Then after a suitable change of variables, there exists a function \(X\in C^{k-1,\alpha}(2Q')\) which enjoys the following properties for \(y\in 2Q\),
\begin{itemize}
    \item[(1)] \(|x_n-X(y')|\leq r(y)\),
    \item[(2)] \(f(y',X(y'))\leq f(y)\),
    \item[(3)] \(\partial_{x_n}f(y',X(y'))=0\).
\end{itemize}
Moreover, for all derivatives of order \(|\beta|\leq k-1\) and \(y\in 2Q\), the function \(X\) satisfies 
\begin{itemize}
    \item[(4)] \(|\partial^\beta X(y')|\leq Cr(y)^{1-|\beta|}\),
    \item[(5)] \([\partial^\beta X]_{\alpha}(y')\leq Cr(y)^{1-\alpha-|\beta|}\).
\end{itemize}
\end{lem}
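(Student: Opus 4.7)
The plan is to find $X$ by applying the implicit function theorem to $\partial_{x_n}f=0$, after first rotating coordinates to align the direction of largest positive second derivative with $e_n$. Since $f(x)<\omega\nu r_Q^{k+\alpha}$ forces $f(x)^{1/(k+\alpha)}\ll r(x)$ when $\nu$ is small, the hypothesis \eqref{eq:needed} yields $\sup_{|\xi|=1}[\partial^2_\xi f(x)]_+\geq r(x)^{k-2+\alpha}$, so some unit vector $\xi_0$ satisfies $\partial^2_{\xi_0}f(x)\geq c\,r(x)^{k-2+\alpha}$. Rotating so that $\xi_0=e_n$, \Cref{cor:supplementary}(2) then propagates this to $\partial^2_{x_n}f(y)\geq \tfrac{c}{2}\,r_Q^{k-2+\alpha}$ throughout $2Q$, provided $\nu$ is chosen small enough that $2Q$ fits inside the $\sqrt{\nu^2+6\omega\nu}\,r(x)$-neighbourhood supplied by that corollary.

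Next, I would apply \Cref{thm:IFT} to $G(y',x_n):=\partial_{x_n}f(y',x_n)$. \Cref{thm:cauchylike} gives $|G(x)|\leq C r(x)^{k-1+\alpha}$, while the preceding step gives $\partial_{x_n}G\geq \tfrac{c}{2}\,r_Q^{k-2+\alpha}$ on $2Q$. An intermediate value argument along each vertical line $\{y'\}\times\mathbb{R}$ then locates a unique root $X(y')$ with $|X(y')-x_n|\leq C\,r(y)$, which delivers (1) and (3). For (2), the lower bound on $\partial^2_{x_n}f$ makes $f(y',\cdot)$ strictly convex on the vertical segment joining $y_n$ to $X(y')$, so $X(y')$ is the unique minimizer of $f$ along that line and $f(y',X(y'))\leq f(y',y_n)=f(y)$.

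For the derivative estimates (4) and (5), I would induct on $|\beta|$ using the recursive formula from \Cref{thm:IFT}. The factor $1/\partial_{x_n}G$ contributes $r^{-(k-2+\alpha)}$; each term $\partial^{\beta-\eta}\partial^{|\Gamma|+1}_{x_n}f$ in the sum has total order at most $|\beta|+1\leq k$ and contributes $r^{k-(|\beta|-|\eta|+|\Gamma|+1)+\alpha}$ by \Cref{thm:cauchylike}; and each $\partial^\gamma X$ in the product $\prod_{\gamma\in\Gamma}\partial^\gamma X$ contributes $r^{1-|\gamma|}$ by the inductive hypothesis. Because $\Gamma$ partitions $\eta$, these powers of $r$ telescope exactly to $r^{1-|\beta|}$, yielding (4). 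The H\"older estimate (5) is derived in parallel using the pointwise sub-product rule from Section 2 together with \Cref{lem:Taylorest} and the slow variation of $r$, which convert the derivative bounds into $r^{1-\alpha-|\beta|}$.

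The main obstacle will be ensuring that a single function $X$ can be constructed on all of $2Q'$ rather than on some smaller implicit-function neighbourhood, and that the inductive step in (4)--(5) closes without the constants blowing up. Both issues reduce to choosing $\nu$ and $\omega$ small enough that $\partial^2_{x_n}f$ is uniformly comparable to $r_Q^{k-2+\alpha}$ on a region strictly containing $2Q$ together with the vertical excursions $|X(y')-x_n|\lesssim r(y)$; once this is arranged the implicit-function estimates are uniform on $2Q'$ and the inductive bookkeeping runs to completion.
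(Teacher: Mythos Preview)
Your proposal is correct and follows essentially the same route as the paper: rotate so that $\partial^2_{x_n}f(x)$ realizes the supremum in \eqref{eq:needed}, propagate the lower bound on $\partial^2_{x_n}f$ across $2Q$ via \Cref{cor:supplementary}(2), locate the unique fibrewise critical point, invoke \Cref{thm:IFT} on $\partial_{x_n}f$, and then run strong induction on $|\beta|$ through the recursive derivative formula for both (4) and (5). The only cosmetic difference is that the paper pins down $X(y')$ by a Taylor-expansion contradiction showing the minimum of $t\mapsto f(y',t)$ lies in the open interval $(x_n-hr_Q,\,x_n+hr_Q)$ with $h=\sqrt{6\omega\nu}$, whereas you reach the same conclusion by an intermediate-value argument using $|\partial_{x_n}f|\leq Cr^{k-1+\alpha}$ and $\partial^2_{x_n}f\geq \tfrac{c}{2}r_Q^{k-2+\alpha}$; these are equivalent.
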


\textit{Remark}: This result states that \(f\) has a unique minimum along each ray in the \(x_n\) direction through a box contained in \(2Q\). Moreover, the collection of these minima, viewed as a function on \(2Q'\), belongs to \(C^{k-1,\alpha}(2Q')\). The bound \eqref{eq:needed} holds automatically when \(k=2\) and \(k=3\), while if \(k\geq 4\) additional conditions must be placed on \(f\) for \eqref{eq:needed} to hold.

\begin{proof}
Adapting an argument from \cite{SOS_I}, we let \(y'\in 2Q'\) and set \(h=\sqrt{6\omega\nu}\), where \(\nu\) is chosen small enough that \(h\leq 1\) and \(\omega\nu\leq 1\). First we show that the map \(t\mapsto f(y',t)\) has a unique minimum in the interval \((x_n-hr_Q,x_n+hr_Q)\), so that properties \textit{(1)-(3)} follow as direct consequences. Observe that if \(f(x)<\omega\nu r_Q^{k+\alpha}\leq r_Q^{k+\alpha}\) then \(r_Q>0\), and by \eqref{eq:needed} we have
\[
    r_Q=\sup_{|\xi|=1}[\partial^2_\xi f(x)]_+^\frac{1}{k-2+\alpha}.
\]
Assume without loss of generality that the supremum above is attained by the \(x_n\) directional derivative of \(f\) so that \(r_Q^{k-2+\alpha}=\partial^2_{x_n}f(x)\). We note that non-negativity of \(f\) and inclusion in \(C^{k,\alpha}(\mathbb{R}^n)\) are preserved by such rotations.

For each \((y',t)\in 2Q'\times [x_n-hr_Q,x_n+hr_Q]\) we have \(|(t,y')-x|\leq \sqrt{\nu^2+6\omega\nu}r_Q\), and so \(\partial^2_{x_n}f(y',t)\geq\frac{1}{2}r_Q^{k-2+\alpha}\) by item \textit{(2)} of \Cref{cor:supplementary}. It follows that \(t\mapsto f(x',t)\) has a unique minimum on \([x_n-hr_Q,x_n+hr_Q]\). Assume toward a contradiction that the minimum on this interval occurs at the left endpoint \(x_n-hr_Q\), so that \(\partial_{x_n}f(x',x_n-hr_Q)\geq 0\). Additionally, note that \(f(y',x_n)<\frac{3}{2}\omega\nu r_Q^{k+\alpha}\) due to property \textit{(1)} of \Cref{cor:supplementary} and our condition that \(f(x)<\omega\nu r_Q^{k+\alpha}\). So for some \(t\in (x_n-hr_Q,x_n)\) we have
\[
     \frac{3}{2}\omega\nu r_Q^{k+\alpha}>f(y',x_n)=f(y',x_n-hr_Q)+hr_Q\partial_{x_n}f(x',x_n-hr_Q)+\frac{h^2r_Q^2}{2}\partial^2_{x_n}f(y',t)\geq \frac{3}{2}\omega\nu r(x)^{k+\alpha},
\]
a contradiction since \(r_Q>0\). An identical contradiction arises if we assume that the minimum occurs at \(x_n+hr_Q\), so \(t\mapsto f(y',t)\) has a unique minimum in \((x_n-hr_Q,x_n+hr_Q)\) which we call \(X(y')\). Properties \textit{(1)-(3)} follow from our construction, and as \(y'\) was any point in \(2Q'\), we see that \(X\) is a well-defined function on \(2Q'\).

To see that \(X\in C^{k-1}(2Q')\), we observe that for each \(y'\) we have \(\partial_{x_n}f(y',X(y'))=0\) and that \(\partial^2_{x_n}f(y',X(y'))>0\) by property \textit{(2)} of \Cref{cor:supplementary}. Applying Theorem \ref{thm:IFT} to \(\partial_{x_n}f\) at the point \((y',X(y'))\) we see that \(X\in C^{k-1}(U)\) for some neighbourhood \(U\) of \(y'\), since \(\partial_{x_n}f\in C^{k-1}(\mathbb{R}^n)\). Further, as \(y'\) was any point in \(2Q'\) we can find such a neighbourhood around every point in \(2Q\). By the uniqueness statement of \Cref{thm:IFT}, these functions must agree with \(X\) an all such neighbourhoods, meaning that \(X\in C^{k-1}(2Q')\).

To establish derivative estimates of \(X\) for property \textit{(4)}, we first use the derivative formula in Theorem \ref{thm:IFT} to write
\[
    |\partial^\beta X(y')|=\bigg|\frac{1}{\partial^2_{x_n}f(y',X(y'))}\sum_{0\leq\eta\leq\beta}\sum_{ \substack{\Gamma\in P(\eta),\\\Gamma\neq \{\beta\}}}C_{\beta,\Gamma}(\partial^{\beta-\eta}\partial^{|\Gamma|+1}_{x_n}f(y',X(y')))\prod_{\gamma\in\Gamma}\partial^\gamma X(y')\bigg|.
\]
We simplify this by observing that \(\partial^2_{x_n}f(y',X(y'))\geq Cr(y)^{k-2+\alpha}\) on \(2Q'\) by property \textit{(2)} of \Cref{cor:supplementary} and slow variation of \(r\). Using this fact with the estimates of Theorem \ref{thm:cauchylike}, we get
\[
    |\partial^\beta X(y')|\leq Cr(y)^{2-k-\alpha}\sum_{0\leq\eta\leq\beta}\sum_{ \substack{\Gamma\in P(\eta),\\\Gamma\neq \{\beta\}}}r(y',X(y'))^{k+\alpha+|\eta|-|\beta|-|\Gamma|-1}\prod_{\gamma\in\Gamma}|\partial^\gamma X(y')|.
\]
Taking \(\nu\) small enough that \Cref{lem:slowvar} holds when \(\nu\) is replaced by \(\sqrt{\nu^2+6\omega\nu}\), it follows from the definition of \(X\) that \(r(y',X(y'))\leq Cr(y)\) whenever \(y'\in 2Q'\).

Now we argue by strong induction that \(|\partial^\beta X(y')|\leq Cr(y)^{1-|\beta|}\), observing for a base case that if \(|\beta|=1\) then for some \(j\in\{1,\dots,n-1\}\) and any \(y'\in 2Q'\) we can write 
\[
    |\partial^\beta X(y')|=|\partial_{x_j}X(y')|=\bigg|\frac{\partial_{x_j}\partial_{x_n}f(y',X(y'))}{\partial^2_{x_n}f(y',X(y'))}\bigg|\leq \frac{Cr(y)^{k+\alpha-2}}{r(y)^{k+\alpha-2}}=C.
\]
For an inductive hypothesis we assume that \(|\partial^\gamma X(y')|\leq Cr(y)^{1-|\gamma|}\) whenever \(|\gamma|<|\beta|\), so that
\[
    |\partial^\beta X(y')|\leq C\sum_{0\leq\eta\leq\beta}\sum_{ \substack{\Gamma\in P(\eta),\\\Gamma\neq \{\beta\}}}r(y)^{1-|\beta|+|\eta|-|\Gamma|}\prod_{\gamma\in\Gamma}r(y)^{1-|\gamma|}=Cr(y)^{1-|\beta|}.
\]
It follows that the claimed estimate holds for every \(\beta\) with \(|\beta|\leq k-1\), giving \textit{(4)}.

It remains to demonstrate that estimate \textit{(5)} holds, and once again we prove this using strong induction. For a base case we observe that \(|X(w')-X(z')|\leq C|w'-z'|\) by the Mean Value Theorem and the fact that \(|\nabla X|\leq C\) uniformly. Further, if \(w',z'\in 2Q'\) then we have that \(|w'-z'|\leq Cr(y)^{1-\alpha}|w'-z'|^\alpha\) and
\[
    |X(w')-X(z')|\leq Cr(y)^{1-\alpha}|w'-z'|^\alpha,
\]
from which it follows by taking a limit supremum as \(w',z'\rightarrow y'\) that \([X]_{\alpha}(y')\leq Cr(y)^{1-\alpha}\) for \(y\in 2Q'\). Thus the required semi-norm estimate holds in the base case.

For the inductive step first use \Cref{thm:IFT} to write the pointwise difference \(\partial^\beta X(w')-\partial^\beta X(z')\) for \(w',z'\in 2Q'\) in expanded form as follows:
\[
    \sum_{\eta\leq\beta}\sum_{ \substack{\Gamma\in P(\eta),\\\Gamma\neq \{\beta\}}}C_{\beta,\Gamma}\bigg(\frac{\partial^{\beta-\eta}\partial^{|\Gamma|+1}_{x_n}f(w',X(w'))}{\partial^2_{x_n}f(w',X(w'))}\prod_{\gamma\in\Gamma}\partial^\gamma X(w')-\frac{\partial^{\beta-\eta}\partial^{|\Gamma|+1}_{x_n}f(z',X(z'))}{\partial^2_{x_n}f(z',X(z'))}\prod_{\gamma\in\Gamma}\partial^\gamma X(z')\bigg).
\]
For brevity, we write \(\Tilde{w}=(w',X(w'))\) and note that \(|\tilde{w}-\Tilde{z}|\leq C|w'-z'|\) by the estimates above. Using the triangle inequality we can bound each of the terms in the sum above by the quantity
\[
    \frac{|\partial^{\beta-\eta}\partial^{|\Gamma|+1}_{x_n}f(\tilde{w})|}{\partial^2_{x_n}f(\tilde{w})}\bigg|\prod_{\gamma\in\Gamma}\partial^\gamma X(w)-\prod_{\gamma\in\Gamma}\partial^\gamma X(z)\bigg|+\bigg|\frac{\partial^{\beta-\eta}\partial^{|\Gamma|+1}_{x_n}f(\tilde{w})}{\partial^2_{x_n}f(\tilde{w})}-\frac{\partial^{\beta-\eta}\partial^{|\Gamma|+1}_{x_n}f(\Tilde{z})}{\partial^2_{x_n}f(\Tilde{z})}\bigg|\prod_{\gamma\in\Gamma}|\partial^\gamma X(w)|
\]
meaning that the required semi-norm estimate on \(\partial^\beta X\) will follow if we can bound each of the four factors above in an appropriate fashion.

To this end we first observe that by lower control of \(\partial^2_{x_n}f\) on \(2Q'\), together with slow variation of \(r\) and the derivative estimates of Theorem \ref{thm:cauchylike}, we have
\[
    \frac{|\partial^{\beta-\eta}\partial^{|\Gamma|+1}_{x_n}f(\tilde{w})|}{\partial^2_{x_n}f(\tilde{w})}\leq \frac{Cr(y)^{k+\alpha-|\beta|+|\eta|-|\Gamma|-1}}{r(y)^{k+\alpha-2}}=Cr(y)^{1-|\beta|+|\eta|-|\Gamma|}.
\]
Next, we employ the derivative bounds on \(X\) proved above for item \textit{(5)}. Notice that in the sum above, since \(\Gamma\in P(\eta)\) for \(\eta\) satisfying \(|\eta|\leq |\beta|\leq k\) and \(\Gamma\neq\{\beta\}\), for each \(\gamma\in \Gamma \) we must have that \(|\gamma|\leq k-1\). Therefore \(|\partial^\gamma X(w')|\leq Cr(y)^{1-|\gamma|}\) uniformly on \(2Q'\) by \textit{(4)}, and it follows that
\[
    \bigg|\prod_{\gamma\in\Gamma}\partial^\gamma X(w')\bigg|\leq C\prod_{\gamma\in\Gamma}r(y)^{1-|\gamma|}=Cr(y)^{|\Gamma|-|\eta|}.
\]
Using lower control of \(\partial^2_{x_n}f\) by \(r(y)^{k-2+\alpha}\) once again, we can also make the the following estimate:
\[
    \bigg|\frac{\partial^{\beta-\eta}\partial^{|\Gamma|+1}_{x_n}f(\tilde{w})}{\partial^2_{x_n}f(\tilde{w})}-\frac{\partial^{\beta-\eta}\partial^{|\Gamma|+1}_{x_n}f(\Tilde{z})}{\partial^2_{x_n}f(\Tilde{z})}\bigg|\leq \frac{C|\partial^2_{x_n}f(\Tilde{z})\partial^{\beta-\eta}\partial^{|\Gamma|+1}_{x_n}f(\tilde{w})-\partial^2_{x_n}f(\tilde{w})\partial^{\beta-\eta}\partial^{|\Gamma|+1}_{x_n}f(\Tilde{z})|}{r(y)^{2k+2\alpha-4}}.
\]
Employing the triangle inequality and our local control of derivatives of \(f\), we can bound the numerator on the right-hand side above by a constant multiple of the following expression,
\[
    r(y)^{k-2+\alpha}|\partial^{\beta-\eta}\partial^{|\Gamma|+1}_{x_n}f(\tilde{w})-\partial^{\beta-\eta}\partial^{|\Gamma|+1}_{x_n}f(\Tilde{z})|+r(y)^{k+\alpha-|\beta|+|\eta|-|\Gamma|-1}|\partial^2_{x_n}f(\Tilde{z})-\partial^2_{x_n}f(\tilde{w})|.
\]
If \(|\beta|-|\eta|+|\Gamma|+1<k\) then the mean value theorem, our pointwise derivative estimates on \(f\), and the estimate \(|\tilde{w}-\Tilde{z}|\leq Cr(y)\) for \(w',z'\in 2Q'\), all give
\[
    |\partial^{\beta-\eta}\partial^{|\Gamma|+1}_{x_n}f(\tilde{w})-\partial^{\beta-\eta}\partial^{|\Gamma|+1}_{x_n}f(\Tilde{z})|\leq Cr(y)^{k-|\beta|+|\eta|-|\Gamma|-1}|w'-z'|^\alpha.
\]
On the other hand, if \(|\beta|-|\eta|+|\Gamma|+1=k\) this estimate holds since \(\partial^{\beta-\eta}\partial^{|\Gamma|+1}_{x_n}f\in C^\alpha(\mathbb{R}^n)\) and 
\[
    |\partial^{\beta-\eta}\partial^{|\Gamma|}_{x_n}f(\tilde{w})-\partial^{\beta-
    \eta}\partial^{|\Gamma|}_{x_n}f(\Tilde{z})|\leq C|\tilde{w}-\Tilde{z}|^\alpha\leq C|w'-z'|^\alpha=Cr(y)^{k-|\beta|+|\eta|-|\Gamma|-1}|w'-z'|^\alpha.
\]
An identical argument shows that \(|\partial^2_{x_n}f(z',X(z'))-\partial^2_{x_n}f(w',X(w'))|\leq Cr(y)^{k-2}|w'-z'|^\alpha\) when \(k\geq 2\), and altogether we find now that
\[
    \bigg|\frac{\partial^{\beta-\eta}\partial^{|\Gamma|+1}_{x_n}f(\tilde{w})}{\partial^2_{x_n}f(\tilde{w})}-\frac{\partial^{\beta-\eta}\partial^{|\Gamma|+1}_{x_n}f(\Tilde{z})}{\partial^2_{x_n}f(\Tilde{z})}\bigg|\leq Cr(x)^{1-\alpha-|\beta|+|\eta|-|\Gamma|}|w'-z'|^\alpha.
\]

One more term estimate remains before we can invoke an inductive hypothesis on the H\"older semi-norms of \(X\) to simplify our bounds. Iterating the triangle inequality gives
\[
    \bigg|\prod_{\gamma\in\Gamma}\partial^\gamma X(w')-\prod_{\gamma\in\Gamma}\partial^\gamma X(z')\bigg|\leq \sum_{\gamma\in\Gamma}\bigg(\prod_{\mu\in\Gamma\setminus\{\gamma\}}\sup_{B'}|\partial^\mu X|\bigg)|\partial^\gamma X(w')-\partial^\gamma X(z')|,
\]
and using the estimates established above for property \textit{(2)}, we can bound this further to get
\[
    \bigg|\prod_{\gamma\in\Gamma}\partial^\gamma X(w')-\prod_{\gamma\in\Gamma}\partial^\gamma X(z')\bigg|\leq C\sum_{\gamma\in\Gamma}r(y)^{|\Gamma|-1-|\eta|+|\gamma|}|\partial^\gamma X(w')-\partial^\gamma X(z')|. 
\]
Consequently, we can bound each term in our earlier expansion of \(\partial^\beta X(w')-\partial^\beta X(z')\) to get
\[
    |\partial^\beta X(w')-\partial^\beta X(z')|\leq C\sum_{\eta\leq\beta}\sum_{ \substack{\Gamma\in P(\eta)\\\Gamma\neq \{\beta\}}}\bigg(\sum_{\gamma\in\Gamma}\frac{|\partial^\gamma X(w')-\partial^\gamma X(z')|}{r(y)^{|\beta|-|\gamma|}}+r(y)^{1-\alpha-|\beta|}|w'-z'|^\alpha\bigg).
\]

Dividing the expression above by \(|w'-z'|^\alpha\) and taking a limit supremum as \(w',z'\rightarrow y'\), it follows from the definition of the pointwise H\"older semi-norm that
\[
    [\partial^\beta X]_\alpha(y')\leq C\sum_{\eta\leq\beta}\sum_{ \substack{\Gamma\in P(\eta),\\\Gamma\neq \{\beta\}}}\sum_{\gamma\in\Gamma}r(y)^{-|\beta|+|\gamma|}[\partial^\gamma X]_\alpha(y')+Cr(y)^{1-\alpha-|\beta|}.
\]
Finally, assume for a strong inductive hypothesis that \([\partial^\gamma X]_{\alpha}(y')\leq Cr(y)^{1-\alpha-|\gamma|}\) for \(y'\in 2Q'\) whenever \(|\gamma|<|\beta|\). It follows immediately that
\[
    [\partial^\beta X]_\alpha(y')\leq C\sum_{\eta\leq\beta}\sum_{ \substack{\Gamma\in P(\eta),\\\Gamma\neq \{\beta\}}}\sum_{\gamma\in\Gamma}r(y)^{-|\beta|+|\gamma|}r(y)^{1-\alpha-|\gamma|}+Cr(y)^{1-\alpha-|\beta|}=Cr(y)^{1-\alpha-|\beta|}.
\]
By induction this holds for every \(\beta\) with \(|\beta|\leq k-1\), showing that \textit{(5)} holds on \(2Q'\).
\end{proof}

Henceforth, we assume that \(\nu\) has been chosen small enough that the \(\nu\)-dependant estimates of the preceding results hold. Moreover we assume that \eqref{eq:needed} holds, and we recall that this is automatic when \(k=2\) and \(k=3\). Our reward for undergoing the hard work of defining \(X\) on \(2Q'\), and proving its various properties, is that for \(y\in 2Q\) we may follow what is by now a standard technique (see e.g. \cite{Fefferman-Phong,SOS_I,Tataru}) of defining
\[
    F(y)=f(y',X(y')).
\]
It follows from the construction above that \(f-F\geq 0\), with equality at local minima of \(f\). Now we establish pointwise properties of the functions \(F\) and \(f-F\).

\begin{lem}\label{lem:nice}
Assume that the hypotheses of \Cref{lem:minia} hold and define \(F(y)=f(y',X(y'))\). Then for \(\beta\) with \(|\beta|<\frac{k+\alpha}{2}\) and every \(y\in 2Q\),
\begin{itemize}
    \item[(1)] \(|\partial^\beta\sqrt{f(y)-F(y)}|\leq Cr(y)^{\frac{k+\alpha}{2}-|\beta|}\),
    \item[(2)] \([\partial^\beta\sqrt{f-F}]_{\frac{\alpha}{2}}(y)\leq Cr(y)^{\frac{k}{2}-|\beta|}\) if \(k\) is even,
    \item[(3)] \([\partial^\beta\sqrt{f-F}]_{\frac{1+\alpha}{2}}(y)\leq Cr(y)^{\frac{k-1}{2}-|\beta|}\) if \(k\) is odd.
\end{itemize}
\end{lem}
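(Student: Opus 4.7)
The strategy is to exploit the vanishing of $\partial_{x_n} f$ at $(y',X(y'))$ to extract a factorization of $f-F$ that isolates the only source of vanishing. Since $\partial_{x_n}f(y',X(y'))=0$, Taylor's theorem applied in the $x_n$-variable about $X(y')$ gives
\[
    f(y)-F(y)=(y_n-X(y'))^2\,h(y),\qquad h(y)=\int_0^1(1-s)\,\partial^2_{x_n}f\bigl(y',\,X(y')+s(y_n-X(y'))\bigr)\,ds.
\]
Property \textit{(2)} of \Cref{cor:supplementary} gives $\partial^2_{x_n}f\geq cr_Q^{k-2+\alpha}$ on a neighbourhood containing the entire segment of integration, from which $h(y)\geq cr(y)^{k-2+\alpha}$ on $2Q$ by slow variation of $r$. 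Writing $u(y)=y_n-X(y')$, the smooth branch $g=u\sqrt{h}$ satisfies $g^2=f-F$, and this is what I would take as the meaning of ``$\sqrt{f-F}$'' in the estimates claimed --- the signed branch suffices for the SOS decomposition.

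I would then derive bounds on derivatives of $h$ by differentiating under the integral sign and applying \Cref{lem:genchain} to the composition $\partial^2_{x_n}f\circ\phi_s$, where $\phi_s(y)=(y',X(y')+s(y_n-X(y')))$. The derivatives of $X$ that appear are controlled by Lemma~\ref{lem:minia}~\textit{(4)}--\textit{(5)}, and the derivatives of $\partial^2_{x_n}f$ evaluated at $\phi_s(y)$ are controlled by \Cref{thm:cauchylike} together with the fact that $|\phi_s(y)-y|\leq r(y)$ (so that $r(\phi_s(y))\sim r(y)$). Standard bookkeeping shows that the powers of $r$ in each term telescope, yielding $|\partial^\beta h(y)|\leq Cr(y)^{k-2+\alpha-|\beta|}$ and matching pointwise H\"older semi-norms on $2Q$ for $|\beta|\leq k-2$.

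With the lower bound $h\geq cr^{k-2+\alpha}$ and the derivative bounds on $h$ in hand, the bootstrap argument used in the proof of \Cref{lem:local1} transfers verbatim with $h$ in place of $f$ and with exponent $k-2+\alpha$ in place of $k+\alpha$. This produces
\[
    |\partial^\gamma\sqrt{h(y)}|\leq Cr(y)^{(k-2+\alpha)/2-|\gamma|}
\]
together with the corresponding pointwise $C^{\alpha/2}$ or $C^{(1+\alpha)/2}$ semi-norm bounds, according to parity of $k$. The generalized product rule applied to $g=u\sqrt{h}$ then yields
\[
    \partial^\beta g=\sum_{\gamma\leq\beta}\binom{\beta}{\gamma}(\partial^\gamma u)(\partial^{\beta-\gamma}\sqrt{h}).
\]
Combining $|\partial^\gamma u(y)|\leq Cr(y)^{1-|\gamma|}$ from Lemma~\ref{lem:minia}~\textit{(4)} (noting the easy verifications $|u|\leq r(y)$ and $\partial_{x_n}u=1$) with the bounds on $\sqrt{h}$ just obtained, every term contributes at most $Cr(y)^{(k+\alpha)/2-|\beta|}$, giving \textit{(1)}. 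The semi-norm bounds \textit{(2)} and \textit{(3)} follow by the sub-product rule combined with Lemma~\ref{lem:minia}~\textit{(5)} and the corresponding H\"older bounds on $\sqrt{h}$.

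The main technical obstacle is the combinatorial tracking of powers of $r$ through the iterated chain and product rules: each application of \Cref{lem:genchain} or its product-rule counterpart introduces a sum over partitions, and one must check that the exponents $r^{1-|\gamma|}$ from $u$, $r^{(k-2+\alpha)/2-|\gamma|}$ from $\sqrt{h}$, and the implicit exponents hidden in $h$ itself all collapse cleanly. The factorization $f-F=u^2h$ is essential precisely because it confines the vanishing of $f-F$ to the hypersurface $\{y_n=X(y')\}\cap 2Q$ inside a single factor $u^2$, letting $h$ remain bounded below so that the bootstrap of \Cref{lem:local1} is available.
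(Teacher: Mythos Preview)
Your proposal is correct and follows essentially the same route as the paper: the same Taylor factorization $f-F=(y_n-X(y'))^2H$, the same lower bound $H\geq cr^{\,k-2+\alpha}$ from Corollary~\ref{cor:supplementary}, the same chain-rule bookkeeping giving $|\partial^\beta H|\leq Cr^{\,k-2+\alpha-|\beta|}$, and the same product-rule assembly of $u\sqrt{H}$ for estimate \textit{(1)}. The only divergence is in how you reach \textit{(2)} and \textit{(3)}: you propose to push the pointwise H\"older semi-norms through the sub-product rule using Lemma~\ref{lem:minia}~\textit{(5)} and the analogue of the Lemma~\ref{lem:local1} argument for $\sqrt{H}$, whereas the paper observes that once \textit{(1)} is in hand one can simply apply the mean value theorem at order $|\beta|+1$ and interpolate $|w-z|\leq Cr(y)^{1-\lambda}|w-z|^\lambda$ on $2Q$ to obtain the semi-norm bound in a single line. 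Your route works but is more labour; the paper's shortcut avoids re-running the semi-norm machinery entirely.
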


\begin{proof}
By the fundamental theorem of calculus we have \(f(y)-F(y)=(y_n-X(y'))^2H(y)\), where
\[
    H(y)=\int_0^1 (1-t)\partial^2_{x_n}f(y',ty_n+(1-t)X(y'))dt.
\]
Arguing as in the proof of \Cref{lem:minia} we have \(\frac{1}{2}r_Q^{k-2+\alpha}\leq \partial^2_{x_n}f(y',ty_n+(1-t)X(y'))\) for every \(t\in[0,1]\), meaning that \(H(y)\geq Cr(y)^{k-2+\alpha}\). Additionally, we can bound the derivatives of \(H\) on \(2Q\). To this end we first observe that
\[
    \partial^\beta H(y)=\int_0^1 (1-t)\partial^\beta[\partial^2_{x_n}f(y',ty_n+(1-t)X(y'))]dt.
\]
To evaluate the derivative inside the integral, we use the shorthand \(L(y,t)=ty_n+(1-t)X(y')\) and write \(\beta'=(\beta_1,\dots,\beta_{n-1})\), so that \Cref{lem:genchain} gives
\[
    \partial^\beta[\partial^2_{x_n}f(y',L(y,t))]=t^{\beta_n}\sum_{\mu\leq\beta'}\sum_{\Gamma\in P(\mu)}C_{\beta',\Gamma}(1-t)^{|\Gamma|}\partial^{\beta'-\mu}\partial_{x_n}^{2+\beta_n+|\Gamma|}f(y',L(y,t))\prod_{\gamma\in\Gamma}\partial^\gamma X(y').
\]
It follows from the triangle inequality that
\[
    |\partial^\beta H(y)|\leq  C\sum_{\mu\leq\beta'}\sum_{\Gamma\in P(\mu)}\int_0^1t^{\beta_n}(1-t)^{|\Gamma|+1}|\partial^{\beta'-\mu}\partial_{x_n}^{2+\beta_n+|\Gamma|}f(y',L(y,t))|dt\prod_{\gamma\in\Gamma}|\partial^\gamma X(y')|.
\]
Recall from estimate \textit{(4)} of \Cref{lem:minia} that \(|\partial^\gamma X(y')|\leq Cr(y)^{1-|\gamma|}\). Additionally, Theorem \ref{thm:cauchylike} and \Cref{lem:slowvar} give \(|\partial^{\beta'-\mu}\partial_{x_n}^{2+\beta_n+|\Gamma|}f(y',L(y,t))|\leq Cr(y)^{k+\alpha-2-|\Gamma|-|\beta|+|\mu|}\), so
\[
    |\partial^\beta H(y)|\leq  C\sum_{\mu\leq\beta'}\sum_{\Gamma\in P(\mu)}r(y)^{k+\alpha-2-|\Gamma|-|\beta|+|\mu|}\prod_{\gamma\in\Gamma}r(y)^{1-|\gamma|}=Cr(y)^{k+\alpha-2-|\beta|}.
\]

Next, using these bounds we can estimate derivatives of \(\sqrt{H}\) on \(2Q\). To this end we employ the chain rule, the lower bound \(H(y)\geq Cr(y)^{k-2+\alpha}\), and the derivative bound above to estimate
\[
    |\partial^\beta \sqrt{H(y)}| =\bigg|\sum_{\Gamma\in P(\beta)}C_{\beta,\Gamma}H(y)^{\frac{1}{2}-|\Gamma|}\prod_{\gamma\in\Gamma}\partial^\gamma H(y)\bigg|\leq Cr(y)^{\frac{k+\alpha}{2}-1-|\beta|}.
\]
Further, estimates \textit{(1)} and \textit{(4)} of \Cref{lem:minia} show that \(|\partial^\beta(y_n-X(y'))|\leq Cr(y)^{1-|\beta|}\) on \(2Q\), respectively when \(|\beta|=0\) and \(|\beta|>0\). Using the product rule we can thus bound derivatives of \(\sqrt{f(y)-F(y)}=(y_n-X(y'))\sqrt{H(y)}\) by writing
\[
    \partial^\beta\sqrt{f(y)-F(y)}=\sum_{\gamma\leq\beta}\binom{\beta}{\gamma}\partial^{\beta-\gamma}(y_n-X(y'))\partial^\gamma\sqrt{H(y)}.
\]
Employing the triangle inequality and the derivative bounds computed above, we find that
\[
    |\partial^\beta\sqrt{f(y)-F(y)}|\leq C\sum_{\gamma\leq\beta}|\partial^{\beta-\gamma}(y_n-X(y'))||\partial^\gamma\sqrt{H(y)}|\leq Cr(y)^{\frac{k+\alpha}{2}-|\beta|}.
\]
Since \(y\in 2Q\) was arbitrary, estimate \textit{(1)} of \Cref{lem:nice} follows.

For estimates \textit{(2)} and \textit{(3)} we once again denote by \(\Lambda\) the integer part of \(\frac{k}{2}\), and set \(\lambda=\frac{k+\alpha}{2}-\Lambda\) so that the claimed estimates will follow if we can show that \([\partial^\beta\sqrt{f-F}]_{\lambda}(y)\leq Cr(y)^{\Lambda-|\beta|}\). To this end we observe that if \(w,z\in 2Q\), then the mean value theorem and estimate \textit{(1)} give
\[
    |\partial^\beta\sqrt{f(w)-F(w)}-\partial^\beta\sqrt{f(z)-F(z)}|\leq Cr(y)^{\frac{k+\alpha}{2}-|\beta|-1}|w-z|\leq Cr(y)^{\Lambda-|\beta|}|w-z|^\lambda.
\]
It follows by taking a limit supremum as \(w,z\rightarrow y\) that \([\partial^\beta\sqrt{f-F}]_{\lambda}(y)\leq Cr(y)^{\Lambda-|\beta|}\).
\end{proof}

There remains one more property of \(F\) to establish before we can proceed to extend the local results of this section to global ones. These are derivative and local semi-norm estimates, which follow from the properties of \(X\) established in \Cref{lem:minia}.

\begin{lem}\label{lem:implicitest}
Assume that the hypotheses of \Cref{lem:minia} hold, and define \(X\) and \(F\) as above. If \(F(y)=f(y',X(y'))\) then for every \(y\in 2Q\) the following estimates hold:
\begin{itemize}
    \item[(1)] \(|\partial^\beta F(y)|\leq Cr(y)^{k+\alpha-|\beta|}\),
    \item[(2)] \([\partial^\beta F]_{\alpha}(y)\leq Cr(y)^{k-|\beta|}\).
\end{itemize}
\end{lem}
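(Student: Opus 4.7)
The plan is to observe that $F(y) = f(y', X(y'))$ depends only on $y'$, so $\partial^\beta F \equiv 0$ whenever $\beta_n > 0$ and both estimates are trivial in that case. For $\beta$ with $\beta_n = 0$ we apply the chain rule of Lemma \ref{lem:genchain} in the form
\[
    \partial^\beta F(y') = \sum_{\eta \leq \beta} \sum_{\Gamma \in P(\eta)} C_{\beta,\Gamma}\,(\partial^{\beta-\eta}\partial^{|\Gamma|}_{x_n} f)(y', X(y')) \prod_{\gamma \in \Gamma} \partial^\gamma X(y'),
\]
and estimate each factor using results already established: the derivatives of $f$ are controlled by Theorem \ref{thm:cauchylike}, and the derivatives of $X$ are controlled by Lemma \ref{lem:minia}(4). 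Slow variation of $r$ (Lemma \ref{lem:slowvar}), together with estimate \textit{(1)} of Lemma \ref{lem:minia}, ensures $r(y', X(y')) \leq C r(y)$ throughout $2Q$.

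For property \textit{(1)}, each term in the sum above contributes at most
\[
    C\, r(y)^{k + \alpha - (|\beta| - |\eta| + |\Gamma|)} \prod_{\gamma \in \Gamma} r(y)^{1 - |\gamma|} = C\, r(y)^{k + \alpha - |\beta|},
\]
since $\sum_{\gamma \in \Gamma}(1 - |\gamma|) = |\Gamma| - |\eta|$, and the exponents cancel cleanly. Summing over the finitely many multi-indices $\eta$ and partitions $\Gamma$ gives the desired bound.

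For property \textit{(2)}, the cleanest route is to split on $|\beta|$. When $|\beta| < k$, the mean value theorem combined with estimate \textit{(1)} (applied to $\beta + e_j$ for each $j$) gives
\[
    |\partial^\beta F(w) - \partial^\beta F(z)| \leq C\, r(y)^{k + \alpha - |\beta| - 1} |w - z|,
\]
and since $|w-z| \leq C r(y)$ on $2Q$, we may interpolate the modulus $|w-z|$ as $|w-z| \leq C r(y)^{1-\alpha}|w-z|^{\alpha}$, which produces the desired bound after passing to the limit supremum defining $[\partial^\beta F]_\alpha(y)$. The main obstacle is the endpoint case $|\beta| = k$, where the mean value theorem is unavailable. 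Here we must return to the chain rule expansion and estimate $\partial^\beta F(w) - \partial^\beta F(z)$ term by term, mirroring the argument carried out in Lemma \ref{lem:nice}: each difference is split via the triangle inequality into one factor measuring the oscillation of a derivative of $f$ (with the critical terms where that derivative has order exactly $k$ requiring the $C^\alpha$ regularity of $\nabla^k f$ combined with $|(w', X(w')) - (z', X(z'))| \leq C|w' - z'|$) and one factor measuring the oscillation of a product of derivatives of $X$ (controlled by Lemma \ref{lem:minia}(5) together with \textit{(4)} for the remaining factors). Strong induction on $|\beta|$, as used in the proof of Lemma \ref{lem:minia}(5), closes the argument: the exponents collapse to $r(y)^{k - |\beta|}|w - z|^\alpha$, and taking the limit supremum as $w, z \to y$ yields the claimed pointwise Hölder estimate on $2Q$.
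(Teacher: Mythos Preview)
Your overall strategy is right, and for \(|\beta|\leq k-1\) everything you wrote goes through verbatim. The gap is at the top order \(|\beta|=k\). In your direct chain-rule expansion
\[
    \partial^\beta F(y') = \sum_{\eta \leq \beta} \sum_{\Gamma \in P(\eta)} C_{\beta,\Gamma}\,(\partial^{\beta-\eta}\partial^{|\Gamma|}_{x_n} f)(y', X(y')) \prod_{\gamma \in \Gamma} \partial^\gamma X(y'),
\]
the term \(\eta=\beta\), \(\Gamma=\{\beta\}\) contains the factor \(\partial^\beta X\), a derivative of \(X\) of order \(k\). But \(X\in C^{k-1,\alpha}\) only, and Lemma~\ref{lem:minia}\textit{(4)} and \textit{(5)} control \(\partial^\gamma X\) only for \(|\gamma|\leq k-1\). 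So your appeal to Lemma~\ref{lem:minia}\textit{(4)} does not cover this term, and the same issue recurs in your treatment of \textit{(2)} when \(|\beta|=k\).

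The fix is to note that this single problematic term carries the coefficient \((\partial_{x_n}f)(y',X(y'))\), which vanishes identically by Lemma~\ref{lem:minia}\textit{(3)}. Once you say this, your argument is complete. The paper builds this cancellation in from the outset: it writes \(\beta=\mu+\rho\) with \(|\rho|=1\), uses \(\partial_{x_n}f(y',X(y'))=0\) to obtain \(\partial^\rho F=(\partial^\rho f)(y',X(y'))\), and only then applies the chain rule to \(\partial^\mu\). Since \(|\mu|=|\beta|-1\leq k-1\), every \(\partial^\gamma X\) that appears has \(|\gamma|\leq k-1\), so Lemma~\ref{lem:minia}\textit{(4)},\textit{(5)} apply directly and no vanishing needs to be invoked later. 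The two routes are equivalent; the paper's just makes the role of \(\partial_{x_n}f\equiv 0\) on the graph of \(X\) explicit up front.

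Two minor remarks. First, your ``strong induction on \(|\beta|\)'' in part \textit{(2)} is not really needed here: unlike the recursion for \(X\) in Lemma~\ref{lem:minia}, the formula for \(\partial^\beta F\) expresses it directly in terms of derivatives of \(f\) and of \(X\) of lower order, so a single term-by-term estimate suffices. Second, for \textit{(2)} the paper does not split on \(|\beta|<k\) versus \(|\beta|=k\); it runs the difference estimate uniformly from the (reduced) chain-rule formula, exactly as in the proof of Lemma~\ref{lem:minia}\textit{(5)}. Your mean-value shortcut for \(|\beta|<k\) is fine and slightly quicker.
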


\begin{proof}
Given a multi-index \(\beta\) of order \(|\beta|\leq k\), we compute \(\partial^\beta F\) by first writing \(\beta=\mu+\rho\), for a multi-index \(\rho\leq \beta\) with \(|\rho|=1\), and with \(\mu=\beta-\rho\). It follows from the definition of \(X\) that
\[
    \partial^\beta F(y')=\partial^\mu[\partial^\rho f(y',X(y'))+\partial_{x_n}f(y',X(y'))\partial^\rho X(y')]=\partial^\mu[\partial^\rho f(y',X(y'))].
\]
To bound the \(\mu\)-derivative on the right-hand side, we employ \Cref{lem:genchain} to get
\[
    \partial^\beta F(y')=\partial^\mu[\partial^\rho f(y',X(y'))]=\sum_{\eta\leq\mu }\sum_{\Gamma\in P(\eta)}C_{\mu,\Gamma}(\partial^{\beta-\eta}\partial^{|\Gamma|}_{x_n}f(y',X(y')))\prod_{\gamma\in\Gamma}\partial^\gamma X(y').
\]
Then, observe by the triangle inequality that
\[
    |\partial^\beta F(y')|\leq C\sum_{\eta\leq\mu }\sum_{\Gamma\in P(\eta)}|\partial^{\beta-\eta}\partial^{|\Gamma|}_{x_n}f(y',X(y'))|\prod_{\gamma\in\Gamma}|\partial^\gamma X(y')|.
\]
Since \(|\mu|=|\beta|-1\leq k-1\), for each \(\gamma\) in the product above, we have that \(|\partial^\gamma X(y')|\leq Cr(y)^{1-|\gamma|}\) on \(2Q'\) by estimate \textit{(4)} of \Cref{lem:minia}. Similarly, Theorem \ref{thm:cauchylike} and slow variation of \(r\) allow us to bound the derivatives of \(F\) above uniformly by powers of \(r(y)\) to get property \textit{(1)}:
\[
    |\partial^\beta F(y')|\leq C\sum_{\eta\leq\mu }\sum_{\Gamma\in P(\eta)}r(y)^{k+\alpha-|\beta|+|\eta|-|\Gamma|}\prod_{\gamma\in\Gamma}r(y)^{1-|\gamma|}=Cr(y)^{k+\alpha-|\beta|}.
\]

For estimate \textit{(2)}, we take \(w',z'\in 2Q'\) and write \(\tilde{w}=(w',X(w'))\). As above, for some multi-index \(\mu\leq\beta\) of order \(|\beta|-1\) we can expand \(\partial^\beta F\) as follows:
\[
    \partial^\beta F(w')-\partial^\beta F(z')= \sum_{0\leq\eta\leq\mu}\sum_{\Gamma\in P(\eta)}C_{\beta,\Gamma}\bigg(\partial^{\beta-\eta}\partial^{|\Gamma|}_{x_n}f(\tilde{w})\prod_{\gamma\in\Gamma}\partial^\gamma X(w)-\partial^{\beta-\eta}\partial^{|\Gamma|}_{x_n}f(\Tilde{z})\prod_{\gamma\in\Gamma}\partial^\gamma X(z)\bigg).
\]
From here, an argument identical to that used to prove estimate \textit{(5)} of \Cref{lem:minia} shows that \([\partial^\beta F]_{\alpha}(y')\leq Cr(y)^{k-|\beta|}\) for every derivative of order \(|\beta|\leq k\) and every \(y'\in 2Q'\), giving \textit{(2)}.
\end{proof}

Later we will require an appropriate extension by zero of \(F\) to all of \(\mathbb{R}^{n-1}\). Thankfully, such a function is fairly easy to construct. Recall that \(\lambda<\frac{3}{2}\) is a fixed parameter from Theorem \ref{thm:party}.

\begin{lem}\label{lem:crudeext}
There exists a non-negative function in \( C^{k,\alpha}(\mathbb{R}^{n-1})\) which agrees with \(F\) on \(\lambda Q'\).
\end{lem}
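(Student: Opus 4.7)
The plan is to construct the required extension by multiplying $F$ by a smooth cutoff and extending by zero. The key observation is that the cutoff can be arranged to vanish well inside $2Q'$ because of the slack between $\lambda$ and $2$.

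First, I would record that $F$ genuinely lies in $C^{k,\alpha}(\overline{2Q'})$. Since $r$ is continuous (slow variation implies continuity) and $\overline{2Q'}$ is compact, $r$ is bounded above on $\overline{2Q'}$. Combined with the pointwise bounds from Lemma \ref{lem:implicitest}, namely $|\partial^\beta F(y')| \leq C r(y)^{k+\alpha - |\beta|}$ and $[\partial^\beta F]_\alpha(y') \leq C r(y)^{k-|\beta|}$, this yields a uniform bound on all derivatives of $F$ up to order $k$ and a uniform bound on the $\alpha$-seminorm of the top-order derivatives. Using the identity $[\partial^\beta F]_{\alpha,2Q'} = \sup_{y'\in 2Q'} [\partial^\beta F]_\alpha(y')$ noted in Section 2, this gives $F \in C^{k,\alpha}(\overline{2Q'})$. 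Non-negativity of $F$ follows from property \textit{(2)} of Lemma \ref{lem:minia}, since $F(y') = f(y', X(y')) \geq 0$.

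Next, since $\lambda < \tfrac{3}{2} < 2$, I would pick $\mu$ with $\lambda < \mu < 2$ and choose a non-negative smooth cutoff $\eta : \mathbb{R}^{n-1} \to [0,1]$ with $\eta \equiv 1$ on $\lambda Q'$ and $\operatorname{supp} \eta \subset \mu Q'$; this is standard (e.g. a tensor product of one-dimensional bump functions built from a fixed smooth step). Then define
\[
    \widetilde F(y') = \begin{cases} \eta(y')\,F(y') & y' \in 2Q', \\ 0 & y' \notin 2Q'.\end{cases}
\]
Non-negativity of $\widetilde F$ is inherited from that of $\eta$ and $F$, and $\widetilde F = F$ on $\lambda Q'$ because $\eta \equiv 1$ there.

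It remains to verify $\widetilde F \in C^{k,\alpha}(\mathbb{R}^{n-1})$. Inside $2Q'$, the product $\eta F$ belongs to $C^{k,\alpha}(\overline{2Q'})$ by the Leibniz rule, since $\eta$ is smooth (hence in every $C^{k,\alpha}$) and $F \in C^{k,\alpha}(\overline{2Q'})$. Outside $\mu Q'$ the function $\widetilde F$ is identically zero, and since $\mu Q' \subset 2Q'$ with positive distance between $\partial(\mu Q')$ and $\partial(2Q')$, the two definitions agree on a neighbourhood of $\partial(2Q')$ where both vanish. A standard partition argument then gives $C^{k,\alpha}$ regularity on all of $\mathbb{R}^{n-1}$: for $x,y \in \mathbb{R}^{n-1}$ with $|\beta| = k$, if both $x, y \in 2Q'$ or both lie outside $\mu Q'$ the Hölder difference is controlled by $[\partial^\beta \widetilde F]_{\alpha,\overline{2Q'}}$ or is zero, respectively, while the mixed case reduces to the first via the point on the segment from $x$ to $y$ meeting $\partial(\mu Q')$, at which $\partial^\beta \widetilde F = 0$ for $|\beta| \leq k$. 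This routine bookkeeping — not a genuine obstacle — completes the verification, and nothing in the construction is difficult once the bounds of Lemma \ref{lem:implicitest} are in hand.
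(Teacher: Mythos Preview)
Your proposal is correct and follows essentially the same approach as the paper: multiply $F$ by a smooth cutoff that equals $1$ on $\lambda Q'$ and is supported in a larger cube contained in $2Q'$, then extend by zero. The only noteworthy difference is that the paper works directly with the sub-product rule and the explicit power-of-$r$ estimates $|\partial^\beta\psi|\leq Cr^{-|\beta|}$ and $|\partial^\beta F|\leq Cr^{k+\alpha-|\beta|}$ to obtain $[\partial^\beta(\psi F)]_\alpha(y')\leq Cr(y')^{k-|\beta|}$ with a constant independent of $Q$, whereas you first pass through the qualitative statement $F\in C^{k,\alpha}(\overline{2Q'})$ and invoke Leibniz abstractly; the paper's version thus makes the uniformity in $Q$ explicit (which matters downstream in the induction of Theorem~\ref{main 1}), while yours would need a line of extra bookkeeping to recover it, but for the lemma as stated both arguments suffice.
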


\begin{proof} Let \(\phi:\mathbb{R}\rightarrow\mathbb{R}\) be a smooth non-negative function supported in \([-1,1]\) such that \(\phi=1\) on \([-\frac{3}{4},\frac{3}{4}]\). Given \(y'\in 2Q'\) define
\[
    \psi(y')=\prod_{j=1}^{n-1}\phi\bigg(\frac{y_j-x_j}{\ell(Q)}\bigg),
\]
so that if \(y'\in\lambda Q' \) then for each \(j\in\{1,\dots,n-1\}\) we have \(|y_j-x_j|\leq \frac{\lambda}{2}\ell(Q)\leq \frac{3}{4}\ell(Q)\) (recall that \(\lambda<\frac{3}{2}\)) and \(\phi((y_j-x_j)/\ell(Q))=1\). It follows that \(\psi=1\) on \(\lambda Q'\). Additionally, if \(|x_j-y_j|>\ell(Q)\) for any \(j\in\{1,\dots,n-1\}\) then \(\phi((y_j-x_j)\ell(Q))=0\), meaning that \(\psi\) is supported in \(2Q'\).

It remains to show that \(\psi F\in C^{k,\alpha}(\mathbb{R}^{n-1})\). The argument from the proof of Theorem \ref{thm:party} shows that \(|\partial^\beta\psi(y')|\leq Cr(y')^{-|\beta|}\) and \([\partial^\beta\psi]_\alpha(y')\leq Cr(y')^{-|\beta|-\alpha}\) for \(y\in 2Q'\). Combining these estimates on \(\psi\) with the inequalities from \Cref{lem:implicitest} and employing the sub-product rule, we get 
\[
    [\partial^\beta(\psi F)]_{\alpha}(y')\leq\sum_{\gamma\leq\beta}\binom{\beta}{\gamma}([\partial^{\beta-\gamma}\psi]_{\alpha}(y')|\partial^\gamma F(y')|+[\partial^{\beta-\gamma}F]_{\alpha}(y')|\partial^\gamma \psi(y')|)\leq Cr(y')^{k-|\beta|}.
\]
It follows that if \(|\beta|=k\) then \([\partial^\beta(\psi F)]_{\alpha}(y')\leq C\) uniformly in \(2Q'\), and since this holds trivially outside of \(2Q'\) we conclude that \(\psi F\in C^{k,\alpha}(\mathbb{R}^{n-1})\) as claimed.
\end{proof}

This completes our discussion of localized results restricted to a single dyadic cube. Now we consider a the function \(\psi_j^2f\), where \(\psi_j\) is one of the functions given by Theorem \ref{thm:party}. Our primary result concerning this function is the following lemma; in what follows, given a cube \(Q_j\) we denote its center by \(x_j\) and we write \(r_j=\inf_{Q_j}r\).

\begin{lem}\label{lem:rootscors}
Let \(f\in C^{k,\alpha}(\mathbb{R}^n)\) be non-negative, and let \(\psi_j\) be one of the partition functions from Theorem \ref{thm:party} supported in \(\lambda Q_j\). If \(f(x_j)\geq \omega r_j^{k+\alpha}\) then the following estimates hold in \(\mathbb{R}^n\):
\begin{itemize}
    \item[(1)] \(|\partial^\beta[\psi_j\sqrt{f}](y)|\leq Cr(y)^{\frac{k+\alpha}{2}-|\beta|}\),
    \item[(2)] \([\partial^\beta(\psi_j\sqrt{f})]_{\frac{\alpha}{2}}(y)\leq Cr(y)^{\frac{k}{2}-|\beta|}\) if \(k\) is even,
    \item[(3)] \([\partial^\beta(\psi_j\sqrt{f})]_{\frac{1+\alpha}{2}}(y)\leq Cr(y)^{\frac{k-1}{2}-|\beta|}\) if \(k\) is odd.
\end{itemize}
If \(f(x_j)<\omega r_j^{k+\alpha}\) and the hypotheses of \Cref{lem:minia} hold, then for \(F\) as in \Cref{lem:nice} we have
\begin{itemize}
    \item[(4)] \(|\partial^\beta[\psi_j\sqrt{f-F}](y)|\leq Cr(y)^{\frac{k+\alpha}{2}-|\beta|}\),
    \item[(5)] \([\partial^\beta(\psi_j\sqrt{f-F})]_{\frac{\alpha}{2}}(y)\leq Cr(y)^{\frac{k}{2}-|\beta|}\) if \(k\) is even,
    \item[(6)] \([\partial^\beta(\psi_j\sqrt{f-F})]_{\frac{1+\alpha}{2}}(y)\leq Cr(y)^{\frac{k-1}{2}-|\beta|}\) if \(k\) is odd.
\end{itemize}
\end{lem}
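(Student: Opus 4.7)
The plan is to combine the partition function estimates from Theorem \ref{thm:party} with the square root estimates from Lemma \ref{lem:local1} (for items (1)--(3)) and Lemma \ref{lem:nice} (for items (4)--(6)), via the product rule and the sub-product rule.

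First I would observe that $\psi_j$ is supported in $\overline{\lambda Q_j}\subset 2Q_j$ (since $\lambda<3/2<2$), and that $\psi_j$ vanishes identically in a neighbourhood of every point $y\notin \overline{\lambda Q_j}$. Consequently, $\psi_j\sqrt{f}$ (respectively $\psi_j\sqrt{f-F}$) is identically zero near any such $y$, all of its derivatives vanish, and its pointwise H\"older semi-norms at $y$ are zero. Thus the estimates (1)--(6) hold trivially outside $\overline{\lambda Q_j}$, and it suffices to prove them for $y\in\overline{\lambda Q_j}$. For such $y$, slow variation of $r$ together with $\lambda Q_j\subset 2Q_j$ gives $r(y)\asymp r_j$, so all estimates of Lemmas \ref{lem:local1} and \ref{lem:nice} (which are stated in terms of $r(y)$) are legitimately available.

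For (1), I would apply the generalized product rule and the triangle inequality to get
\[
    |\partial^\beta(\psi_j\sqrt{f})(y)|\leq \sum_{\gamma\leq\beta}\binom{\beta}{\gamma}|\partial^\gamma\psi_j(y)|\,|\partial^{\beta-\gamma}\sqrt{f}(y)|,
\]
then substitute Theorem \ref{thm:party}(2) and Lemma \ref{lem:local1}(1). The powers of $r(y)$ telescope cleanly:
\[
    r(y)^{-|\gamma|}\cdot r(y)^{\frac{k+\alpha}{2}-|\beta-\gamma|}=r(y)^{\frac{k+\alpha}{2}-|\beta|},
\]
giving the claim. For (2), I would apply the sub-product rule from Section 2 at semi-norm exponent $\alpha/2$, using Theorem \ref{thm:party}(3) with that exponent and Lemma \ref{lem:local1}(2). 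The two contributions
\[
    r(y)^{-\frac{\alpha}{2}-|\beta-\gamma|}r(y)^{\frac{k+\alpha}{2}-|\gamma|}\quad\text{and}\quad r(y)^{\frac{k}{2}-|\beta-\gamma|}r(y)^{-|\gamma|}
\]
both simplify to $r(y)^{\frac{k}{2}-|\beta|}$. The odd case (3) is identical after replacing $\alpha/2$ with $(1+\alpha)/2$ and invoking Lemma \ref{lem:local1}(3); the resulting exponent is $\frac{k-1}{2}-|\beta|$.

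For parts (4)--(6), I would repeat the same three-step argument verbatim but with $\sqrt{f-F}$ replacing $\sqrt{f}$ and Lemma \ref{lem:nice} replacing Lemma \ref{lem:local1}. The hypotheses of Lemma \ref{lem:minia} are assumed in force, so $F$ is a well-defined $C^{k-1,\alpha}$ function on $2Q_j'$, and all required bounds hold on $2Q_j$. There is no genuine obstacle here: the proof is essentially a bookkeeping exercise, and the only point that demands any vigilance is checking that in each term of the product or sub-product sum the powers of $r(y)$ contributed by the $\psi_j$ factor and the square-root factor add up to the target exponent, which they do precisely because the estimates in Theorem \ref{thm:party}, Lemma \ref{lem:local1}, and Lemma \ref{lem:nice} are dimensionally consistent in $r$.
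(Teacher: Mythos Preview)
Your proposal is correct and matches the paper's approach exactly: the paper omits the proof, stating only that it follows from the estimates of Theorem \ref{thm:party} and the local square-root lemmas via the product and sub-product rules. Your write-up fills in precisely those details, including the observation that the estimates are trivial off $\overline{\lambda Q_j}$; the only discrepancy is that the paper cites Lemma \ref{lem:implicitest} rather than Lemma \ref{lem:nice} for (4)--(6), but since the bounds on $\sqrt{f-F}$ live in Lemma \ref{lem:nice}, your citation is the appropriate one.
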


The proofs of these estimates are straightforward; they rely only on the estimates from Theorem \ref{thm:party}, Lemma \ref{lem:local1} and Lemma \ref{lem:implicitest}, together with the product and sub-product rules introduced in Section 2. We omit them for brevity. The lemma above gives us the following result.

\begin{cor}\label{cor:theniceone}
The functions \(\psi_j\sqrt{f}\) and \(\psi_j\sqrt{f-F}\) belong to \(C^\frac{k+\alpha}{2}(\mathbb{R}^n)\).
\end{cor}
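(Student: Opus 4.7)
The plan is to treat this corollary as an essentially immediate unpacking of Lemma \ref{lem:rootscors}: what needs verification is that the \emph{pointwise} estimates stated there translate into the \emph{uniform} estimates required for membership in the half-regular space \eqref{halfreg}.

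First, I would exploit the support of \(\psi_j\). Both \(\psi_j\sqrt{f}\) and \(\psi_j\sqrt{f-F}\) vanish identically outside \(\lambda Q_j\), so all derivatives and difference quotients are trivially zero off this set and estimates only need to be verified on \(\lambda Q_j\). Crucially, by slow variation of \(r\) (\Cref{lem:slowvar}) together with the comparability \(\ell(Q_j) \asymp \inf_{Q_j} r\) established in the proof of \Cref{thm:party}, the control function \(r\) is uniformly bounded above on \(\lambda Q_j\) by a multiple of \(\ell(Q_j)\). In particular, any non-negative power \(r(y)^s\) is uniformly bounded on the support of \(\psi_j\).

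Next, I would verify the derivative bounds. For \(k\) even the target space is \(C^{k/2,\alpha/2}(\mathbb{R}^n)\), and for \(k\) odd it is \(C^{(k-1)/2,(1+\alpha)/2}(\mathbb{R}^n)\); in both cases the derivatives required to be bounded are those of order \(|\beta| \le \lfloor (k+\alpha)/2 \rfloor\), and the top-order seminorm is taken of derivatives of order exactly \(\lfloor (k+\alpha)/2 \rfloor\). Estimates \textit{(1)} and \textit{(4)} of \Cref{lem:rootscors} give \(|\partial^\beta[\psi_j\sqrt{f}]|, |\partial^\beta[\psi_j\sqrt{f-F}]| \le Cr(y)^{(k+\alpha)/2 - |\beta|}\) for all such \(\beta\), and since the exponent is non-negative these are uniformly bounded on \(\lambda Q_j\) by the previous paragraph.

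Finally, for the top-order H\"older seminorm I would plug \(|\beta| = \lfloor (k+\alpha)/2 \rfloor\) into estimates \textit{(2)}, \textit{(3)}, \textit{(5)}, \textit{(6)}: in each case the exponent on \(r(y)\) collapses to zero, yielding a uniform pointwise seminorm bound. Applying the identity \([g]_{\alpha,\Omega} = \sup_{x\in\Omega}[g]_\alpha(x)\) recalled in the Preliminaries (with \(\alpha\) replaced by \(\alpha/2\) or \((1+\alpha)/2\)) converts this into the required uniform global H\"older seminorm bound. No step presents a real obstacle; the content of the corollary is already encoded in \Cref{lem:rootscors}, and the only thing to check is that the \(r\)-weighted bounds become genuine constants once restricted to \(\mathrm{supp}\,\psi_j\).
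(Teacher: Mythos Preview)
Your proposal is correct and matches the paper's own treatment: the paper gives no explicit proof of the corollary, simply stating that ``the lemma above gives us the following result,'' and your argument is exactly the unpacking one would supply---restrict to \(\mathrm{supp}\,\psi_j\subset\lambda Q_j\), use slow variation of \(r\) to bound the weights, and invoke the pointwise-to-global seminorm identity from the Preliminaries to convert the estimates of \Cref{lem:rootscors} into genuine membership in \(C^{\frac{k+\alpha}{2}}(\mathbb{R}^n)\). The only cosmetic imprecision is writing \(\lfloor (k+\alpha)/2\rfloor\) for the top derivative order, which in the edge case \(k\) odd, \(\alpha=1\) does not literally equal \((k-1)/2\); the paper's convention \eqref{halfreg} handles this, and your intent is clear.
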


\section{Proof of Theorem \texorpdfstring{\ref{main 1}}{}}

Using induction on \(n\), we now show that if \(k=2\) or \(k=3\) then any non-negative \(f\in C^{k,\alpha}(\mathbb{R}^{n})\) is a SOS of at most \(s_n\) functions \(g_1,\dots,g_{s_n}\) in \eqref{halfreg}, for a number \(s_n\geq m(n,k,\alpha)\) which we estimate after our construction. Further, we show that for each \(j\in\{1,\dots,s_n\}\) the function \(g_j\) satisfies
\begin{flalign}
    &\qquad|\partial^\beta g_j(y)|\leq Cr_f(y)^{\frac{k+\alpha}{2}-|\beta|}\textrm{ for \(y\in\mathbb{R}^n\)},&&\label{induc1}\\
    &\qquad[\partial^\beta g_j]_{\frac{\alpha}{2}}(y)\leq Cr_f(y)^{\frac{k}{2}-|\beta|}\textrm{ if \(k\) is even},\label{induc2}\\
    &\qquad[\partial^\beta g_j]_{\frac{1+\alpha}{2}}(y)\leq Cr_f(y)^{\frac{k-1}{2}-|\beta|}\textrm{ if \(k\) is odd}.\label{induc3}
\end{flalign}
The function \(r_f\) is as in \eqref{eq:controlfunc}, and we now include the subscript to emphasize dependence on \(f\).

For the base case \(n=1\), we show that every non-negative \(f\in C^{k,\alpha}(\mathbb{R})\) is a finite sum of half-regular squares for \(k=2\) or \(k=3\) and \(0<\alpha\leq 1\). A stronger result appears in \cite{BonyFr}, but it omits the derivative and semi-norm estimates that we require for our inductive argument, so we furnish a proof with the required estimates. We emphasize that the one-dimensional result is not new, and our decomposition requires \(s_1=4\) functions. For our later estimates of \(s_n\), we use the optimal constant \(m(1,k,\alpha)=2\) established in \cite{BonyFr}.

Fixing a non-negative function \(f\in C^{k,\alpha}(\mathbb{R})\) and using \Cref{thm:party}, we construct a partition of unity using the control function \(r_f\) defined in \eqref{eq:controlfunc}, so that we may write \(f\) as follows:
\[
    f=\sum_{j=1}^\infty\psi_j^2f.
\]
Fixing \(j\in\mathbb{N}\), we observe that at the center \(x_j\) of \(Q_j\), either \(f(x_j)\geq \omega\nu r_j^{k+\alpha}\) or \(f(x_j)<\omega\nu r_j^{k+\alpha}\).

In the first case, \Cref{lem:rootscors} shows that \(\psi_j\sqrt{f}\) is a \(C^\frac{k+\alpha}{2}(\mathbb{R})\) function that satisfies \eqref{induc1} through \eqref{induc3}. On the other hand, if \(f(x_j)<\omega\nu r_j^{k\alpha}\) then \Cref{lem:minia} shows that there exists a unique local minimum of \(f\) at a point \(X_j\) near \(x_j\), and taking \(F_j=f(X_j)\), \Cref{lem:rootscors} shows that
\[
    \psi_j\sqrt{f-F_j}\in C^\frac{k+\alpha}{2}(\mathbb{R}).
\] 
Moreover, \Cref{lem:rootscors} shows that \(\psi_j\sqrt{f-F_j}\) has the required properties \eqref{induc1} through \eqref{induc3}. Thus \(f\) can be decomposed into a sum of two squares on \(2Q_j\),
\[
    \psi_j^2f=(\psi_j\sqrt{f-F_j})^2+(\psi_j\sqrt{F_j})^2.
\]

Next we must show that \(\psi_j\sqrt{F_j}\in C^\frac{k+\alpha}{2}(\mathbb{R})\) and that it satisfies the required pointwise estimates; this is easy, since \(F_j\) is constant. By slow variation of \(r_f\) we have \(F_j\leq Cr_j^{k+\alpha}\), and it follows by property \textit{(3)} of \Cref{thm:party} that if \(y\in 2 Q_j\) then the following estimate holds:
\[
    |\partial^\beta[\psi_j\sqrt{F_j}](y)|=\sqrt{F_j}|\partial^\beta\psi_j(y)|\leq Cr_j^{\frac{k+\alpha}{2}-|\beta|}\leq Cr(x)^{\frac{k+\alpha}{2}-|\beta|}.
\]
This bound trivially holds outside the support of \(\psi_j\), meaning it holds everywhere as required.

Next we prove that \(\psi_j\sqrt{F_j}\) satisfies the semi-norm estimates \eqref{induc2} and \eqref{induc3}. To deal with the cases in which \(k\) is odd and even simultaneously, we once again define \(\Lambda\) as the integer part of \(\frac{k}{2}\) and set \(\lambda=\frac{k+\alpha}{2}-\Lambda\), so that it suffices to prove that \([\partial^\beta(\psi_j\sqrt{F})]_\lambda(y)\leq Cr(y)^{\Lambda-|\beta|}\). This follows at once from property \textit{(4)} of \Cref{thm:party} together with slow variation:
\[
    [\partial^\beta(\psi_j\sqrt{F_j})]_\lambda(y)=\sqrt{F_j}[\partial^\beta\psi_j]_\lambda(y)\leq Cr_j^\frac{k+\alpha}{2}r_j^{-\lambda-|\beta|}=Cr_j^{\Lambda-|\beta|}\leq Cr(y)^{\Lambda-|\beta|}.
\]
Consequently, \(\psi_j\sqrt{F_j}\) has the properties \eqref{induc1} through \eqref{induc3} required for our induction.

Regardless of the behaviour of \(f\) at \(x_j\), we see now that we can write \(\psi_j^2f\) as a SOS of at most two functions which satisfy the pointwise estimates \eqref{induc1} through \eqref{induc3}. After relabelling, we can therefore write
\begin{equation}\label{eq:infsum}
    f=\sum_{j=1}^\infty h_j^2
\end{equation}
for functions \(h_j\in C^\frac{k+\alpha}{2}(\mathbb{R})\). By estimate \textit{(5)} of \Cref{thm:party}, we can identify \(s_1=4\) sub-collections of functions in the sum above which enjoy pairwise disjoint supports. Fixing any of these sub-collections and indexing its functions by \(S\subseteq\mathbb{N}\) we can define
\begin{equation}\label{eq:recombsum}
    g=\sum_{j\in S}h_j,
\end{equation}
so that \(g\in C^\frac{k+\alpha}{2}(\mathbb{R}^n)\) and it satisfies \eqref{induc1} through \eqref{induc3}, since the functions in the sum enjoy pairwise-disjoint supports and their H\"older norms are uniformly bounded. Calling the functions associated to each sub-collection \(g_1,\dots,g_4\), we can write \(f=g_1^2+g_2^2+g_3^2+g_4^2\), again since the functions in each sub-collection enjoy disjoint supports. This completes the base case.

Now we proceed with the inductive step, assuming for our inductive hypothesis that for every non-negative function \(h\in C^{k,\alpha}(\mathbb{R}^{n-1})\) satisfying \eqref{eq:needed}, there exist \(g_1,\dots,g_{s_{n-1}}\) satisfying estimates \eqref{induc1} through \eqref{induc3} for which \(h=g_1^2+\cdots+g_{s_{n-1}}^2\). Now we let \(f\in C^{k,\alpha}(\mathbb{R}^n)\) be non-negative, and using \Cref{thm:party} we form the partition of unity induced by \(r_f\) to write
\[
    f=\sum_{j=1}^\infty\psi_j^2f.
\]
An identical argument to that employed for the base case shows that either \(\psi_j\sqrt{f}\) satisfies the required derivative estimates and belongs to the half-regular H\"older space, or we can write 
\[
    \psi_j^2f=(\psi_j\sqrt{f-F_j})^2+\psi_j^2F_j.
\]
From \Cref{lem:rootscors}, the first function on the right-hand side above satisfies \eqref{induc1}, \eqref{induc2} and  \eqref{induc3}, and it belongs to the half-regular H\"older space.

The remainder function \(F_j\) is non-negative and by \Cref{lem:crudeext} it can be extended from the support of \(\psi_j\) to a function in \(C^{k,\alpha}(\mathbb{R}^{n-1})\) which agrees with \(F_j\) on this support set. Thus we can identify \(F_j\) with its extension to \(\mathbb{R}^{n-1}\), and by the inductive hypothesis we can write
\begin{equation}\label{eq:inductivedecomps}
    F_j=\sum_{\ell=1}^{s_{n-1}}g_\ell^2.
\end{equation}
for \(g_1,\dots,g_{s_{n-1}}\in C^\frac{k+\alpha}{2}(\mathbb{R}^{n-1}) \) which satisfy the estimates \(|\partial^\beta g_\ell(y)|\leq Cr_{F_j}(y)^{\frac{k+\alpha}{2}-|\beta|}\) and \([\partial^\beta g_j]_{\lambda}(y)\leq Cr_{F_j}(y)^{\Lambda-|\beta|}\), where as usual \(\Lambda=\lfloor\frac{k}{2}\rfloor\) and \(\lambda=\frac{k+\alpha}{2}-\Lambda\). Further, the estimates proved in \Cref{lem:implicitest} show for \(y\in\lambda Q_j\) that
\[
    r_{F_j}(y')=\max\{F_j(y')^\frac{1}{k+\alpha},\sup_{|\xi|=1}[\partial^2_\xi F_j(y')]_+^\frac{1}{k-2+\alpha}\}\leq Cr_f(y),
\]
meaning that on the support of \(\psi_j\) the estimates for \(g_1,\dots,g_{s_{n-1}}\) hold when \(r_F\) is replaced by \(r_f\). Using these refined estimates and the derivative estimates of Theorem \ref{thm:party} in the sub-product rule, we find for each \(\ell\in\{1,\dots,s_{n-1}\}\) that \(\psi_jg_\ell\) satisfies \eqref{induc1} and \eqref{induc2} or \eqref{induc3}, depending on the parity of \(k\). It follows that we can write \(\psi_j^2F_j\) as a sum of at most \(s_{n-1}\) half-regular squares.

In summary, for each \(j\in\mathbb{N}\) we can write \(\psi_j^2f\) as a sum of \(s_{n-1}+1\) squares in \(C^\frac{k+\alpha}{2}(\mathbb{R}^n)\). Combining the squares obtained for each \(j\) and relabelling, we see now that we may write
\[
    f=\sum_{j=1}^\infty h_j^2,
\]
for \(h_j\) satisfying \eqref{induc1}-\eqref{induc3} everywhere. By property \textit{(4)} of \Cref{thm:party}, we can partition the functions in the sum above into \((4^n-2^n)(s_{n-1}+1)\) sub-collections of functions which enjoy pairwise disjoint supports. Recombining and relabelling these functions exactly as we did in the one-dimensional setting, we see that we can set \(s_n=(4^n-2^n)(s_{n-1}+1)\) to write
\[
    f=\sum_{j=1}^{s_n}g_j^2
\]
for functions \(g_1,\dots,g_{s_n}\) which inherit the required differential inequalities. Finally, since \(f\) was any non-negative \(C^{k,\alpha}(\mathbb{R}^n)\) function satisfying \eqref{eq:needed}, this completes our inductive step. This shows that if \(k=2\) or \(k=3\) then every \(C^{k,\alpha}(\mathbb{R}^n)\) function is a sum of half-regular squares.

It remains to bound \(s_n\) to obtain an estimate for \(m(n,k,\alpha)\) when \(k=2\) and \(k=3\). The work in \cite{BonyFr} shows that \(s_1=m(1,k,\alpha)=2\) for \(k\geq 2\), and from estimate \textit{(5)} of Theorem \ref{thm:party} we see that \(s_2=9(s_1+1)=27\). For \(n\geq 3\) we have the recursion \(s_{n}=(4^n-2^n)(s_{n-1}+1)\), from which it follows upon iterating that
\[
    s_n=s_2\prod_{\ell=3}^n(4^\ell-2^\ell)+\sum_{j=0}^{n-3}\prod_{\ell=n-j}^n(4^\ell-2^\ell)
\]
when \(k\geq 3\). To estimate the first product we observe that if \(n\geq 3\) then
\[
    \prod_{\ell=3}^n(4^\ell-2^\ell)=2^{n^2+n-6}\prod_{\ell=3}^n\bigg(1-\frac{1}{2^\ell}\bigg)\leq 2^{n^2+n}\bigg(\frac{7}{512}\bigg),
\]
similarly for \(0\leq j\leq n-3\) the product in the second sum is bounded by \(2^{n+2nj-j^2}(2^{n-j}-1)\), and it follows that
\[
    \sum_{j=0}^{n-3}\prod_{\ell=n-j}^n(4^\ell-2^\ell)\leq 2^n\sum_{j=0}^{n-3}2^{2nj-j^2}(2^{n-j}-1)=2^{n^{2}+n}\sum_{j=3}^{n}\frac{2^{j}-1}{2^{j^{2}}}\leq 2^{n^{2}+n}\sum_{j=3}^{\infty}\frac{2^{j}-1}{2^{j^{2}}}.
\]
Combining these estimates and bounding the series, we find that if \(n\geq 3\) then \(s_n<2^{n^2+n-1.3844}\). This implies the claimed estimate for \(m(n,k,\alpha)\), completing the proof of Theorem \ref{main 1}. \hfill\qedsymbol

In passing, we note that if \(G_n\) is the infinite graph encountered in the proof of Theorem \ref{thm:party} in \(n\) dimensions, then we have \(m(n,k,\alpha)\leq t_n\) where \(t_1=2\) and \(t_{n}=\chi(G_n)(t_{n-1}+1)\) for \(n\geq 2\). So, better estimates of \(\chi(G_n)\) enable improved bounds on decomposition size. We leave such improvements for a future work.

\section{Proof of Theorem \texorpdfstring{\ref{main 3}}{}}

After rescaling if necessary, the differential inequalities \eqref{diffeqs} imply that \eqref{eq:needed} holds. It follows that on each cube \(Q_j\) given by Theorem \ref{thm:party}, either \(\psi_j\sqrt{ f}\) is half-regular, or \(F_j\in C^{k,\alpha}{(\mathbb{R})}\) can be defined as above. By the result of Bony \cite{BonyFr}, \(F_j\) is a sum of squares of at most two half-regular functions, meaning that even if we cannot conclude that \(\psi_j\sqrt{ f}\) is half-regular, we can instead write \(\psi_j^2f=(\psi_j\sqrt{f-F})^2+h_1^2+h_2^2,\) where each function on the right-hand side belongs to \(C^\frac{k+\alpha}{2}(\mathbb{R}^n)\). It follows that \(f\) is an infinite sum of half-regular squares, and recombining as in earlier cases we see that it takes at most \(\chi(G_2)(m(1,k,\alpha)+1)=27\) squares to represent \(f\).

\section{Connection to Polynomial SOS}\label{sec:poly}

It is shown by Hilbert \cite{Hilbert} that if \(n\geq 3\) and \(k\geq 6\), or if \(n\geq 4\) and \(k\geq 4\), then there exist non-negative homogeneous polynomials (also called forms) in \(n\) variables of degree \(k\) which are not sums of squares of polynomials. Such polynomials are in \(C^{k,\alpha}(\mathbb{R}^n)\) for any \(\alpha>0\), since their \(k^{\mathrm{th}}\) order derivatives are constant. Momentarily, we adapt an argument from \cite{BonyEn} to show that no such polynomial is a SOS of functions in \eqref{halfreg}. Thus, if \(n\) and \(k\) are both large then there exist non-negative \(C^{k,\alpha}(\mathbb{R}^n)\) functions which are not sums of half-regular squares. This fact is well-known; various examples appear in \cite{BonyEn,SOS_I}.

Another connection between non-decomposable polynomials and sums of squares does not appear to be widely understood. Denoting by \(F(n,k)\) the real vector space of forms with real coefficients in \(n\) variables of degree \(k\), it is shown by Choi, Lam \& Reznik \cite{CLR} that the Pythagoras number of \(F(n,k)\) is bounded below by a positive constant \(c(n,k)\). In other words, there exists \(P\in F(n,k)\) which is a SOS of polynomials, but \(P\) is a sum of no fewer than \(c(n,k)\) squares. For \(\alpha>0\) we have \(P\in \Sigma(n,k,\alpha)\), and as we will show momentarily \(P\) can be a sum of no fewer than \(c(n,k)\) half-regular squares. It follows at once that \(m(n,k,\alpha)\geq c(n,k)\) for any \(\alpha\in(0,1]\).

Our main bridge between the algebraic perspective of polynomial SOS on the one hand, and the analytic perspective of H\"older spaces on the other, is the following result. It is modelled after \cite[Lem. 1.3]{BonyEn} and \cite[Lem. 5.2]{SOS_I}, and adapts both to our more general situation.

\begin{lem}\label{newcoolexciting}
Let \(P\) be a non-negative homogeneous polynomial of even degree \(k\), assume that \(P\) is not a SOS of fewer than \(m\) polynomials, and let \(\Omega\subset\mathbb{R}^n\) be a bounded open set containing the origin. Given \(C>0\), there exists \(\delta>0\) such that
\[
    \qquad\sup_{\Omega}\bigg|\sum_{j=1}^{m}g_j^2-P\bigg|<\delta\quad\implies\quad\sum_{j=1}^{m}\|g_j\|_{C_b^\frac{k+\alpha}{2}(\overline{\Omega})}>C.
\]
In other words, if \(P\) is a SOS of \(m\) functions on \(\Omega\), then at least one does not belong to \(C_b^\frac{k+\alpha}{2}(\overline{\Omega})\).
\end{lem}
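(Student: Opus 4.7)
The plan is to argue by contradiction using the compact embedding of Hölder spaces, and then to extract polynomial information from the Taylor expansion of each summand at the origin. If the implication failed, we could produce a limit decomposition $P=\sum_{j=1}^m g_j^2$ on $\overline{\Omega}$ with each $g_j\in C_b^{(k+\alpha)/2}(\overline{\Omega})$; Taylor expanding each $g_j$ at $0$ and exploiting the homogeneity of $P$ would then exhibit $P$ itself as a sum of $m$ squares of polynomials of degree $k/2$, contradicting the standing hypothesis on $P$.

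Concretely, suppose toward contradiction that there exist $C>0$ and sequences $(g_{j,n})_{n\ge 1}\subset C_b^{(k+\alpha)/2}(\overline{\Omega})$ for $j=1,\dots,m$ with $\sum_{j=1}^{m}\|g_{j,n}\|_{C_b^{(k+\alpha)/2}(\overline{\Omega})}\le C$ and $\sup_\Omega\bigl|\sum_j g_{j,n}^2-P\bigr|\to 0$. Since $k$ is even, $C^{(k+\alpha)/2}=C^{k/2,\alpha/2}$, so Lemma~\ref{lem:cptmb} supplies a subsequence along which each $g_{j,n}$ converges in $C_b^{k/2}(\overline{\Omega})$ to some $g_j$. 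Lower semicontinuity of the $(\alpha/2)$-Hölder seminorm under uniform convergence of the $(k/2)$-th derivatives places $g_j$ back in $C_b^{(k+\alpha)/2}(\overline{\Omega})$ with $\sum_j\|g_j\|_{C_b^{(k+\alpha)/2}}\le C$, while uniform convergence of $g_{j,n}^2$ yields the identity $\sum_{j=1}^m g_j^2=P$ on $\overline{\Omega}$.

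I then analyze the Taylor polynomial $T_j=\sum_{i=0}^{k/2}T_j^{(i)}$ of $g_j$ at the origin, with $T_j^{(i)}$ the homogeneous piece of degree $i$; the regularity of $g_j$ provides a uniform remainder $|g_j(x)-T_j(x)|\le K|x|^{(k+\alpha)/2}$ on a fixed ball around $0$. By induction on $i=0,1,\dots,k/2-1$, I show that $T_j^{(i)}\equiv 0$ for every $j$. Writing $x=ty$ with $|y|=1$ and $t\to 0^+$, the inductive hypothesis reduces the expansion to
\[
    g_j(ty)=t^{i}T_j^{(i)}(y)+O(t^{i+1})+O(t^{(k+\alpha)/2})
\]
uniformly in $y$, so after squaring and summing,
\[
    t^{2i}\sum_{j=1}^{m}\bigl(T_j^{(i)}(y)\bigr)^{2}=t^{k}P(y)+o(t^{2i}).
\]
Dividing by $t^{2i}$ and sending $t\to 0^+$ (valid because $2i<k$ and $i<(k+\alpha)/2$) forces $\sum_j(T_j^{(i)}(y))^{2}=0$, and non-negativity of squares gives $T_j^{(i)}\equiv 0$. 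After the induction closes we have $g_j(ty)=t^{k/2}T_j^{(k/2)}(y)+o(t^{k/2})$, and matching the $t^{k}$ coefficients in $\sum_j g_j(ty)^2=t^k P(y)$ produces $\sum_{j=1}^{m}\bigl(T_j^{(k/2)}\bigr)^{2}=P$, presenting $P$ as a SOS of $m$ polynomials of degree $k/2$ and contradicting the assumption on $P$.

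The main technical obstacle will be the bookkeeping in the inductive Taylor step: one must verify that the Hölder remainder $O(t^{(k+\alpha)/2})$ together with the higher-order Taylor contributions combine into an honest $o(t^{2i})$ error at every scale $i<k/2$. The decisive slack is $(k+\alpha)/2>k/2>i$, which is available precisely because $\alpha>0$; with $\alpha=0$ the scaling argument would break at $i=k/2-1$, so the positive Hölder exponent is genuinely essential. The other ingredients—lower semicontinuity of the half-regular seminorm, uniformity of the expansions on $S^{n-1}$, and the subsequence extraction—are standard.
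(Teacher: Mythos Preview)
Your proof is correct and follows the same overall strategy as the paper: argue by contradiction, use the compact embedding of Lemma~\ref{lem:cptmb} to pass to a limit decomposition $P=\sum_{j=1}^m g_j^2$ with each $g_j$ at least $C^{k/2}$ near the origin, and then use Taylor expansions together with the homogeneity of $P$ to show that the degree-$k/2$ Taylor polynomials of the $g_j$ already square-sum to $P$.

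The difference lies in how the Taylor information is extracted. The paper sets $Q=\sum p_j^2$ with $p_j$ the full degree-$k/2$ Taylor polynomial, writes $P-Q=o(|x|^{k/2+d})$ where $d$ is the minimal degree appearing in any $p_j$, and iterates a homogeneity argument to push $d$ through the sequence $k/4,\,3k/8,\,7k/16,\dots$ up to $k/2$; only then does $P=Q$ follow. Your induction instead kills the homogeneous pieces $T_j^{(i)}$ one degree at a time for $i=0,1,\dots,k/2-1$, which is more direct and avoids the geometric halving. Both routes land in the same place.

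Two minor remarks. First, the lower-semicontinuity step placing the limit $g_j$ back in $C_b^{(k+\alpha)/2}$ is correct but unnecessary: the paper works only with $g_j\in C^{k/2}$ and the Peano remainder $r_j=o(|x|^{k/2})$, and your scaling argument goes through verbatim with that weaker remainder (at the final step $g_j(ty)=t^{k/2}T_j^{(k/2)}(y)+o(t^{k/2})$ still squares to $t^k(T_j^{(k/2)}(y))^2+o(t^k)$). Second, and relatedly, your closing claim that $\alpha>0$ is ``genuinely essential'' for the scaling step is not quite right: the positive H\"older exponent is what makes the \emph{embedding} compact, but once you have $C^{k/2}$ limits the polynomial extraction needs no further slack.
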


\begin{proof}
Fix \(C>0\) and assume to the contrary that for all \(\delta>0\), there exist \(g_{1},\dots,g_{m}\) such that 
\[
    \sum_{j=1}^m\|g_{j}\|_{C_b^\frac{k+\alpha}{2}(\overline{\Omega})}\leq C\qquad\mathrm{and}\qquad\sup_{\Omega}\bigg|\sum_{j=1}^mg_{j}^2-P\bigg|<\delta.
\]
For each \(\ell\in\mathbb{N}\) choose \(g_{1,\ell},\dots,g_{q,\ell}\) that satisfy the inequalities above with \(\delta=\ell^{-1}\). Fix \(j\) and consider the sequence \(\{g_{j,\ell}\}_{\ell}\) which is uniformly bounded since \(\|g_{j,\ell}\|_{C_b^\frac{k+\alpha}{2}(\Omega )}\leq C\) for each \(\ell\).

By \Cref{lem:cptmb} the embedding \(C_b^\frac{k+\alpha}{2}(\overline{\Omega})\hookrightarrow C_b^\frac{k}{2}(\overline{\Omega})\) is compact, so there exists a subsequence of \(\{g_{j,\ell}\}_{\ell}\) that converges uniformly to a \(\frac{k}{2}\)-times continuously differentiable function \(g_j\) on \(\overline{\Omega}\). Passing to this subsequence if necessary, we see that pointwise in \(\Omega\),
\[
    \bigg|\sum_{j=1}^mg_j^2-P\bigg|= \lim_{\ell\rightarrow\infty}\bigg|\sum_{j=1}^mg_{j,\ell}^2-P\bigg|\leq\lim_{\ell\rightarrow\infty}\frac{1}{\ell}=0.
\]
Thus in \(\Omega\) we can represent \(P\) is a sum of squares of \(m\) functions in \(C^\frac{k}{2}(\Omega)\).

Expanding on an argument of Bony et. al. \cite{BonyEn}, we show that this is impossible. Near zero we can write \(g_j=p_j+r_j\) for \(p_j\) a polynomial of degree at most \(\frac{k}{2}\) and \(r_j=o(|x|^\frac{k}{2})\) as \(x\rightarrow0\). Explicitly, from Taylor's Theorem we have
\[
    p_j(y)=\sum_{0\leq |\gamma|\leq \frac{k}{2}}\frac{\partial^{\gamma} g_j(0)}{\gamma!}y^\gamma\quad\mathrm{and}\quad r_j(y)=\sum_{|\gamma|=\frac{k}{2}}\frac{|\gamma|}{\gamma!}y^\gamma\int_0^1(1-t)^{|\gamma|-1}(\partial^{\gamma}g_j(ty)-\partial^{\gamma}g_j(0))dt.
\]
Taking \(Q=p_1^2+\cdots+p_m^2\), it follows from these formulas that we can write
\[
    P-Q=\sum_{j=1}^m2p_jr_j+\sum_{j=1}^mr_j^2=o(|x|^{\frac{k}{2}+d}),
\]
where \(d\) is the degree of the lowest degree monomial appearing in any \(p_j\). \textit{A priori} we know only that \(d\geq 0\), however we can refine an estimate for \(d\) as follows. Denoting by \(r\) the function on the right-hand side above, we have that \(P=Q+r\) and it follows from homogeneity of \(P\) that for any \(\lambda>0\) and \(x\neq0\),
\[
    \frac{\lambda^\frac{k}{2}P(x)}{|x|^\frac{k}{2}}=\frac{Q(\lambda x)}{\lambda^\frac{k}{2}|x|^\frac{k}{2}}+\frac{r(\lambda x)}{|\lambda x|^\frac{k}{2}}.
\]
Taking a limit as \(\lambda\rightarrow0\), the term on the left vanishes, and from the decay \(r=o(|x|^\frac{k}{2})\) we get
\[
    \frac{1}{|x|^\frac{k}{2}}\lim_{\lambda\rightarrow0}\frac{Q(\lambda x)}{\lambda^\frac{k}{2}}=0.
\]
This implies that each monomial term in \(Q\) has degree at least \(\frac{k}{2}\), and since \(Q\) is a sum of squares the lowest order terms of each \(p_j\) cannot cancel, meaning that each monomial term of \(p_j\) has degree at least \(d\geq \frac{k}{4}\). It follows from the form of our remainders above that \(P-Q=o(|x|^{\frac{k}{2}+\frac{k}{4}})\).

To iterate this argument, we now fix \(x\neq 0\) and \(\lambda>0\), and we use homogeneity of \(P\) to write
\[
    \frac{\lambda^\frac{k}{4}P(x)}{|x|^\frac{3k}{4}}=\frac{Q(\lambda x)}{\lambda^\frac{3k}{4}|x|^\frac{3k}{4}}+\frac{r(\lambda x)}{|\lambda x|^\frac{3k}{4}}.
\]
It follows from the fact that \(r=o(|x|^\frac{3k}{4})\) that by taking a limit in the identity above we get
\[
    \frac{1}{|x|^\frac{3k}{4}}\lim_{\lambda\rightarrow0}\frac{Q(\lambda x)}{\lambda^\frac{3k}{4}}=0,
\]
meaning that each monomial term in \(Q\) has degree at least \(\frac{3k}{4}\), and each monomial in each \(p_j\) has degree at least \(d\geq \frac{3k}{8}\). The remainder formula now shows that \(P-Q=o(|x|^\frac{7k}{8})\). Iterating in this fashion, we see by induction that \(P-Q=o(|x|^{k(1-2^{-j})})\) for each \(j\in\mathbb{N}\), meaning that \(P-Q=o(|x|^k)\) and \(P-Q=0\) identically in some neighbourhood of the origin. But this means that \(P\) is a sum of squares of \(m\) forms, a contradiction. 
\end{proof}

In case \(P\) is not a sum of squares of any number of polynomials, we have the following.

\begin{cor}\label{cor:notsos}
Let \(P\) be a non-negative homogeneous polynomial of even degree \(k\) which is not a SOS of polynomials. Then \(P\) is not a SOS of \(C^\frac{k+\alpha}{2}(\mathbb{R}^n)\) functions for \(\alpha>0\).
\end{cor}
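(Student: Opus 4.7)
The plan is to mimic the scaling argument in the proof of Lemma \ref{newcoolexciting}, but applied directly rather than as a limit of approximating sequences. The compactness step in that lemma exists only to manufacture a candidate polynomial decomposition from an approximate one; here we already start with an \emph{exact} decomposition, so this step becomes unnecessary and we may work straight from the Taylor expansions of the $g_j$ at the origin.

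Suppose for contradiction that $P = g_1^2 + \cdots + g_m^2$ for some $g_j \in C^{\frac{k+\alpha}{2}}(\mathbb{R}^n)$. Since $k$ is even, $C^{\frac{k+\alpha}{2}} = C^{k/2,\alpha/2}$, so each $g_j$ is $k/2$-times continuously differentiable near $0$. By Taylor's theorem write $g_j = p_j + r_j$, where $p_j$ is a polynomial of degree at most $k/2$ and $r_j(x) = o(|x|^{k/2})$ as $x \to 0$. Setting $Q = p_1^2 + \cdots + p_m^2$, expanding the identity $P = \sum_j (p_j + r_j)^2$ gives
\[
    P - Q = \sum_{j=1}^m \bigl( 2 p_j r_j + r_j^2 \bigr) = o(|x|^{k/2 + d}),
\]
where $d$ is the smallest degree of a monomial appearing in any $p_j$.

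The scaling iteration proceeds exactly as in the proof of Lemma \ref{newcoolexciting}: replacing $x$ by $\lambda x$, dividing by appropriate powers of $\lambda$, using homogeneity of $P$, and letting $\lambda \to 0$ forces the low-degree part of $Q$ to vanish. This shows $d \geq k/4$, then $d \geq 3k/8$, and by induction $d \geq k\bigl(1 - 2^{-j}\bigr)$ for every $j \in \mathbb{N}$, hence $d \geq k/2$. Since $\deg p_j \leq k/2$, each $p_j$ must be a form of degree exactly $k/2$, so $Q$ is a form of degree $k$. A final iteration upgrades the remainder bound to $P - Q = o(|x|^k)$, and since $P - Q$ is itself a form of degree $k$, this forces $P \equiv Q$ on all of $\mathbb{R}^n$. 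But then $P$ is a SOS of polynomials, contradicting the hypothesis.

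The only delicate piece is the scaling iteration, but this is already worked out inside Lemma \ref{newcoolexciting}, so no genuinely new obstacle arises; the content of the corollary is essentially that the compactness ingredient of that lemma can be dispensed with when the hypothesis $P \notin \Sigma_{\mathrm{poly}}$ is strengthened from ``not a SOS of $m$ polynomials'' to ``not a SOS of any polynomials.''
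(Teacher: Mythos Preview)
Your proof is correct. The paper gives no separate argument for this corollary, treating it as immediate from Lemma~\ref{newcoolexciting}: if \(P=g_1^2+\cdots+g_m^2\) with each \(g_j\in C^{\frac{k+\alpha}{2}}(\mathbb{R}^n)\), then on any bounded \(\Omega\ni 0\) the left side of the implication in the lemma holds with \(\delta\) arbitrarily small while \(\sum_j\|g_j\|_{C_b^{\frac{k+\alpha}{2}}(\overline{\Omega})}\) is a fixed finite number, a contradiction. You instead inline the Taylor-expansion and scaling argument from the lemma's proof and correctly observe that the compactness step there is only needed to pass from an \emph{approximate} decomposition to an exact one, which is superfluous when an exact decomposition is given by hypothesis. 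So the two approaches are the same in substance; yours is just the lemma's proof with the compactness preamble stripped out.
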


More interesting for our purposes is the following more detailed consequence.

\begin{cor}\label{cor:notbigsos}
Let \(P\) be a non-negative polynomial of even degree \(k\) which is not a SOS of fewer than \(m\) polynomials. Then \(P\) is not a SOS of fewer than \(m\) functions in \(C^\frac{k+\alpha}{2}(\mathbb{R}^n)\) for \(\alpha>0\).
\end{cor}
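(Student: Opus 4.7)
The plan is to deduce Corollary \ref{cor:notbigsos} directly from Lemma \ref{newcoolexciting} by a short contradiction argument that handles the discrepancy in the number of summands via zero-padding.

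First, I would argue by contradiction, assuming that $P = g_1^2 + \cdots + g_\ell^2$ for some $\ell < m$ and some functions $g_j \in C^{\frac{k+\alpha}{2}}(\mathbb{R}^n)$. Since Lemma \ref{newcoolexciting} is stated for sums consisting of exactly $m$ summands, the natural first move is to pad this decomposition with zeros by setting $g_{\ell+1} = \cdots = g_m = 0$. Each added function is smooth with vanishing norm, and the identity $P = \sum_{j=1}^{m} g_j^2$ now holds globally on $\mathbb{R}^n$.

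Next, I would fix a bounded open set $\Omega$ containing the origin (say, the unit ball). Since each $g_j \in C^{\frac{k+\alpha}{2}}(\mathbb{R}^n)$ and $\overline{\Omega}$ is compact, the restriction $g_j|_{\overline{\Omega}}$ lies in $C_b^{\frac{k+\alpha}{2}}(\overline{\Omega})$ with finite norm. Define
\[
    C_0 = 1 + \sum_{j=1}^{m}\|g_j\|_{C_b^{\frac{k+\alpha}{2}}(\overline{\Omega})} < \infty.
\]
By hypothesis $P$ is not a SOS of fewer than $m$ polynomials, so Lemma \ref{newcoolexciting} applies with this choice of $C_0$ and produces a $\delta > 0$ for which the implication $\sup_\Omega|\sum_{j=1}^{m}g_j^2 - P| < \delta \Rightarrow \sum_{j=1}^{m}\|g_j\|_{C_b^{\frac{k+\alpha}{2}}(\overline{\Omega})} > C_0$ holds.

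The contradiction is then immediate: the (padded) exact decomposition satisfies $\sup_\Omega |\sum_{j=1}^m g_j^2 - P| = 0 < \delta$, so the conclusion of the lemma forces $\sum_j \|g_j\|_{C_b^{\frac{k+\alpha}{2}}(\overline{\Omega})} > C_0$, which flatly contradicts the definition of $C_0$. Hence no such representation with $\ell < m$ summands can exist. There is no substantive obstacle in this deduction; the only conceptual point worth flagging is that the lemma's rigid count of $m$ summands is harmless because one can always append zero functions, and the analytical heavy lifting (compact embedding plus Taylor/homogeneity argument) has already been carried out in the proof of Lemma \ref{newcoolexciting}.
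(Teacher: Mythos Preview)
Your proposal is correct and matches the paper's intent: the corollary is stated without proof as an immediate consequence of Lemma \ref{newcoolexciting}, and your zero-padding contradiction argument is precisely the natural deduction. One minor caveat worth noting (about the paper's statement rather than your argument) is that Lemma \ref{newcoolexciting} is proved only for \emph{homogeneous} polynomials, so the corollary as written implicitly inherits that hypothesis; this is all that is needed for the application in Section \ref{sec:poly}.
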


In passing, we note that Corollary \ref{cor:notsos} together with the existence result from Hilbert \cite{Hilbert} imply the following interesting result that resembles \cite[Thm. 1.2 (a) \& (c)]{BonyEn}.
\begin{thm}\label{thm:existencecounter}
Fix \(\alpha>0\) and assume that \(n\geq 3\) and \(k\geq 6\), or that \(n\geq 4\) and \(k\geq 4\). Then there exist \(C^{k,\alpha}(\mathbb{R}^n)\) functions (polynomials) which are not sums of squares of functions in \(C^\frac{k+\alpha}{2}(\mathbb{R}^n)\).
\end{thm}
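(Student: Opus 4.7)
The plan is to combine Hilbert's classical theorem on non-negative polynomials that fail to be sums of squares of polynomials with Corollary \ref{cor:notsos}. The theorem's hypotheses on $(n,k)$ are essentially Hilbert's, except that $k$ is permitted to be odd, so the first step is a small case analysis. For each admissible pair $(n,k)$ I would choose an even integer $k'\leq k$ for which $(n,k')$ still lies in Hilbert's range: take $k'=k$ when $k$ is even; $k'=k-1\geq 6$ when $k$ is odd with $n\geq 3$ and $k\geq 7$; and $k'=4$ when $k=5$ and $n\geq 4$. In every case Hilbert's theorem \cite{Hilbert} then supplies a non-negative homogeneous polynomial $P$ of degree $k'$ in $n$ variables that is not a sum of squares of polynomials, and $P\in C^{k,\alpha}(\mathbb{R}^n)$ automatically because its derivatives of order exceeding $k'$ vanish.

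When $k$ is even the choice $k'=k$ makes Corollary \ref{cor:notsos} directly applicable, immediately yielding that $P$ is not a SOS of functions in $C^{(k+\alpha)/2}(\mathbb{R}^n)$. When $k$ is odd a short local argument is needed, because Corollary \ref{cor:notsos} applied to $P$ only rules out decompositions with factors in the coarser space $C^{(k'+\alpha)/2}(\mathbb{R}^n)$. Suppose toward a contradiction that $P=g_1^2+\cdots+g_m^2$ on $\mathbb{R}^n$ with each $g_i\in C^{(k+\alpha)/2}(\mathbb{R}^n)$, and fix a bounded open $\Omega$ containing the origin. Each restriction $g_i|_\Omega$ lies in $C_b^{(k+\alpha)/2}(\overline{\Omega})$ with finite norm: its global H\"older semi-norms are bounded by hypothesis, while the sup-norms of $g_i$ and its lower derivatives on $\overline{\Omega}$ are finite by compactness. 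Since $(k+\alpha)/2>(k'+\alpha)/2$, Lemma \ref{lem:cptmb} gives the compact embedding $C_b^{(k+\alpha)/2}(\overline{\Omega})\hookrightarrow C_b^{(k'+\alpha)/2}(\overline{\Omega})$, so the constant $C:=1+\sum_i\|g_i\|_{C_b^{(k'+\alpha)/2}(\overline{\Omega})}$ is finite. Applying Lemma \ref{newcoolexciting} to $P$ with this $C$ yields a $\delta>0$; but $\sup_\Omega|\sum_i g_i^2-P|=0<\delta$ while $\sum_i\|g_i\|_{C_b^{(k'+\alpha)/2}(\overline{\Omega})}=C-1<C$, contradicting the conclusion of the lemma.

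The only real obstacle is the bookkeeping for odd $k$, namely ensuring the existence of an even $k'\leq k$ in Hilbert's range and transferring the obstruction from $C^{(k'+\alpha)/2}$ to the finer $C^{(k+\alpha)/2}$ via the compact embedding. All the substantive work is contained in the previously established Lemma \ref{newcoolexciting}, its Corollary \ref{cor:notsos}, and Hilbert's classical existence theorem; no new estimates or constructions are required.
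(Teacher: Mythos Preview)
Your proof is correct and follows the same overall strategy as the paper: invoke Hilbert's theorem to produce a non-negative form $P$ that is not a polynomial SOS, then use Corollary~\ref{cor:notsos} (equivalently Lemma~\ref{newcoolexciting}) to rule out half-regular SOS decompositions. The even case is identical to the paper's.

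For odd $k$, however, you work harder than necessary. The paper simply takes $k'=k-1$ in every odd case (your separate sub-case $k=5$, $n\geq4$ with $k'=4$ is already $k'=k-1$) and observes from definition~\eqref{halfreg} that $C^{(k+\alpha)/2}(\mathbb{R}^n)=C^{(k-1)/2,\,(1+\alpha)/2}(\mathbb{R}^n)$. Since the proof of Lemma~\ref{newcoolexciting} only requires the $g_j$ to lie in $C^{(k-1)/2}$ locally, Corollary~\ref{cor:notsos} applied to $P$ of degree $k-1$ already excludes SOS decompositions in $C^{(k-1)/2,\beta}$ for every $\beta>0$, and the choice $\beta=(1+\alpha)/2$ finishes the argument in one line. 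Your route---embedding $C_b^{(k+\alpha)/2}(\overline\Omega)$ into $C_b^{(k'+\alpha)/2}(\overline\Omega)$ and re-running the contradiction of Lemma~\ref{newcoolexciting} by hand---is valid but redundant: you invoke Lemma~\ref{lem:cptmb} yet use only the continuous (not compact) part of the embedding. The upside of your version is that it makes fully explicit why the obstruction transfers to the finer space, whereas the paper relies on the reader to see that Corollary~\ref{cor:notsos} covers all H\"older exponents $\beta\in(0,1]$.
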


\textit{Remark:} Any polynomial is in \(C^\infty(\mathbb{R}^n)\), so this result implies that if \(n\geq 3\) then there exist non-negative functions in \(C^\infty(\mathbb{R}^n)\) which are not sums of squares in \(C^\infty(\mathbb{R}^n)\). However, if the degree of \(P\) is larger than \(k\) then \(P\not\in C^{k,\alpha}(\mathbb{R}^n)\) for \(\alpha\leq 1\), except when \(P\) has degree \(k+1\) in which case \(P\in C^{k,1}(\mathbb{R}^n)\). This means that \(C^\infty(\mathbb{R}^n)\not\subset C^{k,\alpha}(\mathbb{R}^n)\), motivating our use of \(C^{k,\alpha}(\mathbb{R}^n)\) in the theorem above.

\begin{proof}
If \(n\) and \(k\) satisfy the prescribed inequalities and \(k\) is even, then by \cite{Hilbert} there exists a non-negative homogeneous polynomial \(P\) of degree \(k\) which is not a sum of squares of polynomials. If \(\beta\) is a multi-index of order \(k\) then \(\partial^\beta P\) is constant, meaning that \([\partial^\beta P]_{\alpha,\mathbb{R}^n}=0\) and \(P\in C^{k,\alpha}(\mathbb{R}^n)\). By Corollary \ref{cor:notsos}, \(P\) is not a sum of squares of half-regular functions.

If \(k\) is odd, take \(P\) as above with even degree \(k-1\). If \(|\beta|=k\) then \(\partial^\beta P=0\), so once again \(P\in C^{k,\alpha}(\mathbb{R}^n)\). But for any \(\beta>0\) we recall that \(P\) is not a SOS of functions in the space \(C^{\frac{k-1}{2},\beta}(\mathbb{R}^n)\), and in particular \(P\) is not a SOS in \(C^{\frac{k-1}{2},\frac{1+\alpha}{2}}(\mathbb{R}^n)=C^\frac{k+\alpha}{2}(\mathbb{R}^n)\).
\end{proof}

\section{Proof of Theorem \texorpdfstring{\ref{main 2}}{}}

To begin, we quote a result from \cite{CLR}. Recalling that \(F(n,k)\) denotes the real vector space of degree \(k\) forms in \(n\) variables, we expanding the lower bound for \(P(n,k)\) given in \cite[Thm. 6.4]{CLR}, we obtain the following result.

\begin{thm}[Choi, Lam, Reznik]
If \(P(n,k)\) is the Pythagoras number of \(F(n,k)\), then 
\[
    P(n,k)\geq 2^{n-1}\prod_{j=1}^{n-1}\frac{j+k}{2j+k}\geq 2^{n-1}\left(\frac{1+k}{1+k+n}\right)^{\frac{n}{2}}
\]
\end{thm}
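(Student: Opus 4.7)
The plan is to handle the two inequalities separately. The first inequality, $P(n,k) \geq 2^{n-1}\prod_{j=1}^{n-1}\frac{j+k}{2j+k}$, is precisely the content of Theorem~6.4 in \cite{CLR} and is quoted verbatim from that source; no additional work is required for it. The entirety of the effort lies in the purely algebraic claim
\[
    \prod_{j=1}^{n-1}\frac{j+k}{2j+k} \geq \left(\frac{k+1}{k+n+1}\right)^{n/2}.
\]

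The key observation I would rely on is a symmetric pairing. Setting $a_j = \frac{j+k}{2j+k}$ and $c = \frac{k+1}{k+n+1}$, I would first prove the pointwise bound $a_j\cdot a_{n-j} \geq c$ for all $n \geq 3$ and $1 \leq j \leq n-1$. Clearing denominators and expanding both sides reduces this to the polynomial inequality
\[
    k[n(n-1) - 3j(n-j)] + j(n-j)(n-3) \geq 0.
\]
Both summands are visibly nonnegative: the constant coefficient is nonnegative because $n \geq 3$, and the coefficient of $k$ is nonnegative because AM-GM gives $j(n-j) \leq n^2/4$, so $3j(n-j) \leq 3n^2/4 \leq n(n-1)$ whenever $n \geq 4$; the boundary case $n = 3$ is handled by direct inspection (both sides vanish).

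With the pair inequality in hand, the main claim follows by a case analysis on the parity of $n$. When $n$ is odd, the indices $\{1,\ldots,n-1\}$ pair off perfectly under $j \leftrightarrow n-j$, giving $\prod_{j=1}^{n-1}a_j = \prod_{j=1}^{(n-1)/2} a_j a_{n-j} \geq c^{(n-1)/2} \geq c^{n/2}$, where the last step uses $c\leq 1$ together with $(n-1)/2 \leq n/2$. When $n$ is even, the pairing leaves $a_{n/2}$ unmatched, and a direct cross-multiplication shows $a_{n/2} = \frac{n/2+k}{n+k} \geq c$ (the relevant inequality reduces to $\frac{n}{2}(k+n-1)\geq 0$); combining yields $\prod_{j=1}^{n-1}a_j \geq c^{n/2-1}\cdot c = c^{n/2}$. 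The degenerate cases $n=1$ and $n=2$ are immediate by inspection.

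The main obstacle I anticipate is identifying the correct grouping. Natural alternatives such as single-term bounds of the form $a_j \geq c^{1/(n-1)}$, or Jensen's inequality applied to $\log a_j$ (which is convex in $j$), each produce estimates that are too weak in one of the limits $k \to 0$ or $k \to \infty$; the symmetric pairing $j \leftrightarrow n-j$ is precisely what makes the required lower bound collapse to the elementary polynomial fact above. Once this structural insight is found, the remaining algebra is routine.
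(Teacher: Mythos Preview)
Your proof of the second inequality is correct: the symmetric pairing $j\leftrightarrow n-j$ reduces the claim to the polynomial identity you state, and your parity case analysis and boundary checks are all valid. Note, however, that the paper itself supplies no proof of this theorem; it merely quotes the first inequality from \cite[Thm.~6.4]{CLR} and asserts the second as an elementary simplification without justification. So there is nothing in the paper to compare your argument against---you have in fact furnished more detail than the paper does, and your pairing argument is a clean way to handle what the paper leaves implicit.
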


This result implies that for each even \(k\) and \(n\in\mathbb{N}\), there exists a homogeneous polynomial \(P\) of degree \(k\) in \(n\) variables which is a SOS of no fewer than \(a\) polynomials, where
\[
    a=2^{n-1}\prod_{j=1}^{n-1}\frac{j+k}{2j+k}.
\]
For any \(\alpha>0\) we have \(P\in C^{k,\alpha}(\mathbb{R}^n)\), and since \(P\) is a sum of squares of forms (which necessarily have degree \(\frac{k}{2}\)), we see also that \(P\) is a SOS in \(C^\frac{k+\alpha}{2}(\mathbb{R}^n)\) and \(P\in\Sigma(n,k,\alpha)\). On the other hand, Corollary \ref{cor:notbigsos} indicates \(P\) is a SOS of no fewer than \(m\) functions in \(C^\frac{k+\alpha}{2}(\mathbb{R}^n)\), meaning that \(\ell(P)\geq a\). It follows that
\[
    m(n,k,\alpha)=\sup\{\ell(f)\;:\;f\in \Sigma(n,k,\alpha)\}\geq \ell(P)\geq a,
\]
giving the desired bound for even \(k\). If \(k\) is odd, it suffices to repeat the argument with a polynomial of degree \(k-1\) and to replace \(k\) with \(k-1\) in the expression for \(a\) above.\hfill\qedsymbol

\section{Proof of Theorem \texorpdfstring{\ref{main 4}}{}}

Fix non-negative \(f\in C^{1,\alpha}(\mathbb{R}^n)\) and observe that if \(x,h\in\mathbb{R}^n\) then a Taylor expansion gives
\[
    0\leq f(x)+h\cdot\nabla f(x) +[\nabla f]_\alpha|h|^{1+\alpha}.
\]
Since \(f\) is non-negative, an identical estimate holds if we replace \(h\) with \(-h\), showing that \(|h\cdot\nabla f(x)|\leq f(x) +[\nabla f]_\alpha|h|^{1+\alpha}\). Taking \(h=\lambda\nabla f(x)\) for \(\lambda>0\) gives \(h\cdot\nabla f(x)=\lambda|\nabla f(x)|^2\) and
\[
    \lambda|\nabla f(x)|^2\leq f(x) +\lambda^{1+\alpha}[\nabla f]_\alpha|\nabla f(x)|^{1+\alpha}.
\]

Choosing \(\lambda=(f(x)/\alpha[\nabla f]_\alpha)^\frac{1}{1+\alpha}/|\nabla f(x)|\) gives \(|\nabla f(x)|\leq [\nabla f]_\alpha^\frac{1}{1+\alpha}f(x)^\frac{\alpha}{1+\alpha}(\alpha^\frac{1}{1+\alpha}+\alpha^{-\frac{\alpha}{1+\alpha}})\), from which the claimed inclusion follows by the mean value theorem. \hfill\qedsymbol

\bibliographystyle{siam}
\pagestyle{plain}
\bibliography{references}

\begin{thebibliography}{10}

\bibitem{behzad}
{\sc M.~Behzad and H.~Radjavi}, {\em Chromatic numbers of infinite graphs},
  Journal of Combinatorial Theory, Series B, 21 (1976), pp.~195--200.

\bibitem{BonyFr}
{\sc J.-M. Bony}, {\em Sommes de carr\'{e}s de fonctions d\'{e}rivables}, Bull.
  Soc. Math. France, 133 (2005), pp.~619--639.

\bibitem{BonyEn}
\leavevmode\vrule height 2pt depth -1.6pt width 23pt, {\em Sommes de carr\'{e}s
  de fonctions d\'{e}rivables}, Bull. Soc. Math. France, 133 (2005),
  pp.~619--639.

\bibitem{CLR}
{\sc M.-D. Choi and T.~{Lam, and Bruce Reznik}}, {\em Sums of squares of real
  polynomials}, Proceedings of Symposia in Pure Mathematics, 58 (1994),
  pp.~103--126.

\bibitem{Driver}
{\sc B.~K. Driver}, {\em Analysis Tools with Applications}, Math 231 PDE
  Lecture Notes, University of California, San Diego, June 2003.

\bibitem{Fefferman-Phong}
{\sc C.~Fefferman and D.~H. Phong}, {\em On positivity of pseudo-differential
  operators}, Proc. Nat. Acad. Sci. U.S.A., 75 (1978), pp.~4673--4674.

\bibitem{GilbargTrudinger}
{\sc D.~Gilbarg and N.~S. Trudinger}, {\em Elliptic partial differential
  equations of second order}, Classics in Mathematics, Springer-Verlag, Berlin,
  2001.
\newblock Reprint of the 1998 edition.

\bibitem{Glaeser}
{\sc G.~Glaeser}, {\em Racine carr\'{e}e d'une fonction diff\'{e}rentiable},
  Ann. Inst. Fourier Grenoble, 13 (1963), pp.~203--210.

\bibitem{Guan}
{\sc P.~Guan}, {\em {$C^2$} a priori estimates for degenerate
  {M}onge-{A}mp\`ere equations}, Duke Math. J., 86 (1997), pp.~323--346.

\bibitem{guzman}
{\sc M.~Guzman}, {\em A covering lemma with applications to differentiability
  of measures and singular integral operators}, Studia Mathematica, 34 (1970),
  pp.~299--317.

\bibitem{hardy}
{\sc M.~Hardy}, {\em Combinatorics of partial derivatives}, Electron. J.
  Combin., 13 (2006), pp.~Research Paper 1, 13.

\bibitem{Hilbert}
{\sc D.~Hilbert}, {\em Ueber die {D}arstellung definiter {F}ormen als {S}umme
  von {F}ormenquadraten}, Math. Ann., 32 (1888), pp.~342--350.

\bibitem{SOS_I}
{\sc L.~Korobenko and E.~Sawyer}, {\em Sums of squares {I}: {S}calar
  functions}, J. Funct. Anal., 284 (2023), p.~Paper No. 109827.

\bibitem{SOS_III}
{\sc L.~Korobenko and E.~T. Sawyer}, {\em Sums of squares {III}:
  hypoellipticity in the infinitely degenerate regime}, 2021.

\bibitem{IFTref}
{\sc S.~G. Krantz and H.~R. Parks}, {\em The implicit function theorem}, Modern
  Birkh\"{a}user Classics, Springer, New York, November 2013.

\bibitem{Pfister}
{\sc A.~Pfister}, {\em The Pythagoras Number}, London Mathematical Society
  Lecture Note Series, Cambridge University Press, 1995, p.~94–109.

\bibitem{Tataru}
{\sc D.~Tataru}, {\em On the {F}efferman-{P}hong inequality and related
  problems}, Comm. Partial Differential Equations, 27 (2002), pp.~2101--2138.

\bibitem{Welsh}
{\sc D.~J.~A. Welsh and M.~B. Powell}, {\em {An upper bound for the chromatic
  number of a graph and its application to timetabling problems}}, The Computer
  Journal, 10 (1967), pp.~85--86.

\end{thebibliography}
\end{document}